\newcommand{\termin}[1]{{\em #1}}
\newcommand{\bA}{\mathbf{A}}\newcommand{\bC}{\mathbf{C}}
\newcommand{\bF}{\mathbf{F}}\newcommand{\bG}{\mathbf{G}}
\newcommand{\bL}{\mathbf{L}}
\newcommand{\bN}{\mathbf{N}}\newcommand{\bP}{\mathbf{P}}
\newcommand{\bQ}{\mathbf{Q}}\newcommand{\bR}{\mathbf{R}}
\newcommand{\bZ}{\mathbf{Z}}
\newcommand{\bd}{\boldsymbol{d}}
\newcommand{\be}{\boldsymbol{e}}\newcommand{\bbf}{\boldsymbol{f}}
\newcommand{\bm}{\boldsymbol{m}}\newcommand{\bn}{\boldsymbol{n}}
\newcommand{\bt}{\boldsymbol{t}}
\newcommand{\cA}{{\mathcal A}}\newcommand{\cC}{{\mathcal C}}\newcommand{\cD}{{\mathcal D}}
\newcommand{\cE}{{\mathcal E}}
\newcommand{\cI}{{\mathcal I}}\newcommand{\cL}{{\mathcal L}}
\newcommand{\cM}{{\mathcal M}}\newcommand{\cP}{{\mathcal P}}
\newcommand{\cT}{{\mathcal T}}
\newcommand{\cX}{{\mathcal X}}
\DeclareMathAlphabet{\eulercal}{U}{eus}{m}{n}
\newcommand{\ecD}{{\eulercal D}}
\newcommand{\ecF}{{\eulercal F}}
\newcommand{\ecH}{{\eulercal H}}
\newcommand{\ecM}{{\eulercal M}}
\newcommand{\ecO}{{\eulercal O}}
\newcommand{\ecT}{{\eulercal T}}
\DeclareMathAlphabet{\beulercal}{U}{eus}{b}{n}
\newcommand{\becD}{{\beulercal D}}
\newcommand{\becE}{{\beulercal E}}
\newcommand{\mfS}{\mathfrak{S}}
\newcommand{\scC}{\mathscr{C}}
\newcommand{\scF}{\mathscr{F}}
\newcommand{\scI}{\mathscr{I}}
\newcommand{\scN}{\mathscr{N}}\newcommand{\scO}{\mathscr{O}}
\newcommand{\scT}{\mathscr{T}}
\newcommand{\scX}{\mathscr{X}}
\newcommand{\scZ}{\mathscr{Z}}
\newcommand{\longto}{\longrightarrow}
\newcommand{\longlto}{\longleftarrow}
\newcommand{\inject}{\hookrightarrow}
\newcommand{\isom}{\overset{\sim}{\to}}
\newcommand{\longisom}{\overset{\sim}{\longto}}
\DeclareMathOperator{\End}{End}
\DeclareMathOperator{\Pic}{Pic}
\DeclareMathOperator{\Div}{Div}
\DeclareMathOperator{\Aut}{Aut}
\DeclareMathOperator{\Sym}{Sym}
\DeclareMathOperator{\dist}{dist}
\DeclareMathOperator{\pr}{pr}
\DeclareMathOperator{\rk}{rk}
\DeclareMathOperator{\Hom}{Hom}
\DeclareMathOperator{\DEG}{\textbf{deg}}
\DeclareMathOperator{\Sup}{Sup}
\DeclareMathOperator{\Min}{Min}
\DeclareMathOperator{\Max}{Max}
\DeclareMathOperator{\Inf}{Inf}
\DeclareMathOperator{\Spec}{Spec}
\DeclareMathOperator{\SPEC}{\textbf{Spec}}
\DeclareMathOperator{\Proj}{Proj}
\DeclareMathOperator{\Gal}{Gal}
\DeclareMathOperator{\NS}{NS}
\DeclareMathOperator{\Id}{Id}
\DeclareMathOperator{\Irr}{Irr}
\DeclareMathOperator{\ord}{ord}
\DeclareMathOperator{\Frac}{Frac}
\DeclareMathOperator{\inde}{ind}
\DeclareMathOperator{\TNS}{T_{\text{NS}}}
\let\leq\leqslant
\let\geq\geqslant
\newcommand{\sumu}[1]{\underset{#1}{\sum}}
\newcommand{\produ}[1]{\underset{#1}{\prod}}
\newcommand{\cupu}[1]{\underset{#1}{\cup}}
\newcommand{\capu}[1]{\underset{#1}{\cap}}
\newcommand{\bigcupu}[1]{\underset{#1}{\bigcup}}
\newcommand{\oplusu}[1]{\underset{#1}{\oplus}}
\newcommand{\bigoplusu}[1]{\underset{#1}{\bigoplus}}
\newcommand{\otimesu}[1]{\underset{#1}{\otimes}}
\newcommand{\Maxu}[1]{\underset{#1}{\Max}}
\newcommand{\Minu}[1]{\underset{#1}{\Min}}
\newcommand{\simu}[1]{\underset{#1}{\sim}}
\newcommand{\Supu}[1]{\underset{#1}{\Sup}}
\newcommand{\eps}{\varepsilon}
\newcommand{\vide}{\varnothing}
\newcommand{\eqdef}{\overset{\text{{\tiny{déf}}}}{=}}
\newcommand{\grsym}{\mathfrak{S}} 
\newcommand{\ind}{\mathbf{1}}
\newcommand{\wt}{\widetilde}
\newcommand{\sym}{\mathfrak{S}}
\newcommand{\acc}[2]{\left\langle #1\, ,\,#2 \right\rangle}
\newcommand{\abs}[1]{\left| #1 \right|}
\newcommand{\map}[4]{
\begin{array}{rcl}
#1 & \longto & #2 \\
#3 & \longmapsto & #4 
\end{array} 
}
\newcommand{\bigou}[2]{\underset{{#1}}{\ecO}\left(#2\right)} 
\newcommand{\inv}{\times} 
\newcommand{\ceff}{C_{\text{\textnormal{\tiny{eff}}}}}
\newcommand{\sep}[1]{#1^{\,\text{\textnormal{\tiny{sép}}}}}
\newcommand{\str}[1]{\scO_{#1}}
\newcommand{\cf}{{\it cf.}\  }
\newcommand{\ie}{{\it i.e.}\ }
\newcommand{\eg}{{\it e.g.}\ }
\newcommand{\opcit}{{\it op.cit.}}
\newcommand{\Ql}{\bQ_{\ell}}
\newtheorem{thm}{Theorem}[section]
\newtheorem{prop}[thm]{Proposition}
\newtheorem{lemma}[thm]{Lemma}
\newtheorem{question}[thm]{Question}
\newtheorem{questions}[thm]{Questions}
\newtheorem{hyps}[thm]{Hypotheses}
\newtheorem{heur}[thm]{Heuristic}
\theoremstyle{definition}
\newtheorem{defi}[thm]{Definition}
\newtheorem{notas}[thm]{Notations}
\theoremstyle{remark}
\newtheorem{ex}[thm]{Example}
\newtheorem{exs}[thm]{Examples}
\newtheorem{rem}[thm]{Remark}
\renewcommand{\eqdef}{\overset{\text{{\tiny{def}}}}{=}}
\renewcommand{\ceff}{\text{\textnormal{Eff}}}
\renewcommand{\sep}[1]{#1^{\,\text{\textnormal{\tiny{sep}}}}}
\DeclareMathOperator{\HOM}{\textbf{Mor}}
\newcommand{\courbe}{\scC}
\newcommand{\diveff}[1] {\Div_{\text{eff}}(#1)}
\renewcommand{\TNS}[1]{T_{\text{NS(#1)}}}
\DeclareMathOperator{\Poinc}{Poinc}
\DeclareMathOperator{\Var}{Var}
\DeclareMathOperator{\Mot}{Mot}
\DeclareMathOperator{\HCR}{HCR}
\DeclareMathOperator{\Coll}{Coll}
\newcommand{\tors}{\scT}
\newcommand{\Gammaf}{\Gamma_{\!\!\bbf}}
\newcommand{\classe}[1]{\left[#1\right]}
\newcommand{\Homog}{\ecH}
\newcommand{\kpff} {K_0(\text{PFF}_k)}
\newcommand{\scan}[1]{s_{#1}} 
\newcommand{\Homogs} {\ecH^{\bullet}}
\renewcommand{\div}{\text{div}}
\newcommand{\antican}[1]{\omega_{#1}^{-1}}
\newcommand{\carac}[1]{\cX\left(#1\right)}
\newcommand{\class}[1]{\left[#1\right]}
\newcommand{\mk}{\cM_k}
\newcommand{\mhat} {\widehat{\cM_k}}
\newcommand{\mum} {\mu^{\text{mot}}}
\newcommand{\muxm} {\mu^{\text{mot}}_{X}}
\newcommand{\muxmt} {\wt{\mu^{\text{mot}}_{X}}}
\newcommand{\mux} {\mu_{X}}
\newcommand{\fil}{\ecF}
\renewcommand{\sym}[2]{\Sym^{#2}\left(#1\right)}
\newcommand{\chm}[1]{\Mot_{#1}}
\newcommand{\kochm}[1]{K_0\left(\chm{#1}\right)}
\newcommand{\kochmk}{\kochm{k}}
\newcommand{\symb}[1]{\left[#1\right]}
\newcommand{\chif}{\chi_{{}_{\text{\textnormal{\scriptsize form}}}}}
\newcommand{\chim}{\chi_{{}_{\text{\textnormal{\scriptsize mot}}}}}
\newcommand{\vark}{\Var_k}
\newcommand{\kovark}{K_0(\vark)}
\newcommand{\hmk}{\widehat{\cM_k}}
\newcommand{\ZHW}{Z_{\text{HW}}}
\newcommand{\ZHWm}{Z_{\text{HW},\text{\textnormal{\scriptsize{mot}}}}}
\newcommand{\phinm}{\Phi_{n,\text{\textnormal{\scriptsize{mot}}}}}
\newcommand{\psinm}[1]{\Psi_{n,\text{\textnormal{\scriptsize{mot}}}}(#1)}
\newcommand{\psin}[1]{\Psi_{n}(#1)}
\newcommand{\cone}{\scC}
\title{Asymptotic behaviour of rational curves}
\author{David Bourqui}
\address{IRMAR\\Université de Rennes 1\\Campus de Beaulieu\\35042
  Rennes Cedex\\France}
\email{david.bourqui@univ-rennes1.fr}
\begin{document}
\maketitle
\begin{abstract}
We investigate the asympotic behaviour of the moduli space of morphisms
from the rational curve to a given variety when the degree becomes large.
One of the crucial tools is the homogeneous coordinate ring of the variey. First we
explain in details what happens in the toric case. Then we examine the
general case.

This is a revised and slightly expanded version of notes for a course delivered during the
summer school on rational curves held in June 2010 at Institut Fourier, Grenoble. 
\end{abstract}

\section{Introduction}

\subsection{The problem}

There are several natural questions that one may raise about rational
curves on an algebraic variety $X$: is there a rational curve on $X$?
are there infinitely many? are there 'a lot' of rational
curves on $X$, that is to say, for example, do the rational curves on
$X$ cover an open dense subset? 
Here we will be concerned with the following question: 
given an algebraic variety $X$ 
possessing a lot of rational curves (for example, a rational variety)
is it possible to give a quantitative estimate of the number of
rational curves on it? We expect of course an answer slighly less vague
than: the number is infinite.

To give a more precise meaning to the above question,
let us assume from now that $X$ is projective
and fix a projective embedding $\iota\,:\,X\subset \bP^n$ (or, if you
prefer and which amounts almost to the same,  
an ample line bundle $\cL$ on $X$). Then being given a morphism
$\varphi\,:\,\bP^1\to X$ we 
define its degree (with respect to $\iota$) by
\begin{equation}
\deg_{\iota}(\varphi)\eqdef\deg((\iota\circ \varphi)^{\ast}\str{\bP^n}(1))
\end{equation}
(or $\deg_{\cL}(\varphi)\eqdef\deg(\varphi^{\ast} \cL)$). This is a nonnegative
integer. 
We know from the work of Grothendieck (\cf \cite{Gro:Hilb,Deb:higher}) that for any
nonnegative integer $d$ there exists a quasi-projective variety 
$\HOM(\bP^1,X,\iota,d)$ (or $\HOM(\bP^1,X,\cL,d)$) parametrizing the set of morphisms
$\bP^1\to X$ of $\iota$-degree $d$. 
Assuming that $X$ is defined over a field $k$, 
recall that this mean in particular that for every $k$-extension $L$ 
there is a natural $1$-to-$1$ correspondence between the
set of $L$-points of $\HOM(\bP^1,X,\iota,d)$ and the set of morphisms
$\bP^1_{L}\to X\times_k L$ of $\iota$-degree $d$.

Thus we obtain a sequence of
quasi-projective varieties $\{\HOM(\bP^1,X,\iota,d)\}_{d\in \bN}$ and we can raise the
(still rather vague) question: what can be said about the behaviour
of this sequence? Note that one way to understand this question is 
to 'specialize' the latter sequence to a numeric one, and consider
the behaviour of the specialization. There are several natural
examples of such numeric specializations. 
For instance we can consider
the sequence $\{\dim(\HOM(\bP^1,X,\iota,d))\}$ obtained by taking the dimension, or, 
if $k$ is a subfield of the field of complex numbers $\bC$, the
sequence $\{\chi_c(\HOM(\bP^1,X,\iota,d))\}$, where $\chi_c$ designates
the Euler-Poincar\'e characteristic with compact support; if $k$ is finite, one can also
look at the number of rational points, \ie the sequence $\{\# \HOM(\bP^1,X,\iota,d)(k)\}$. 

The study of the latter
sequence is a particular facet of a problem raised by
Manin and his collaborators in the late 1980's, namely the
understanding of the asymptotic behaviour of the number of rational
points of bounded height on varieties defined over a global field
(see \eg \cite{Pey:bordeaux,Pey:ober}).
The degree of the morphism $x\,:\,\bP^1\to X$ may be interpreted as the
logarithmic height of the point of $X(k(\bP^1))$ determined by $x$.

The sequence $\{\HOM(\bP^1,X,\iota,d)\}_{d\in \bN}$ depends on the
choice of $\iota$ (or $\cL$), nevertheless there is a simple way to get rid
of this dependency: let us introduce indeed the \termin{intrisic degree}
$\DEG(\varphi)$ of a morphism $\varphi\,:\,\bP^1\to X$ 
as 
the element of the dual $\NS(X)$ of the Neron-Severi group 
defined by 
\begin{equation}
\forall x\in \NS(X),\quad \acc{\DEG(\varphi)}{x}\eqdef\deg(\varphi^{\ast}x).
\end{equation}
Then for every $y\in \NS(X)^{\vee}$ there exists a quasi-projective variety 
$\HOM(\bP^1,X,y)$  parametrizing the set of morphisms
$\bP^1\to X$ of intrisic degree $y$. 
For every ample line bundle $\cL$ one
has a finite decomposition
\begin{equation}\label{eq:finite:decomposition}
\HOM(\bP^1,X,\cL,d)
=
\bigsqcup_{
\substack{y\in \NS(X)^{\vee}
\\
\acc{y}{\cL}=d
}
}
\HOM(\bP^1,X,y).
\end{equation} 
Now instead of considering the asympotic behaviour of the sequence
$\{\HOM(\bP^1,X,\cL,d)\}_{d\in \bN}$ for a particular choice of $\cL$,
one could study the behaviour of $\{\HOM(\bP^1,X,y)\}_{y\in \NS(X)^{\vee}}$
when '$y$ becomes large', the latter condition needing of course to be
more precisely stated.

Before explaining the expected behaviour of the
previously introduced sequences, we make a few remarks about possible
generalizations of the problem. None of them will be considered in
these notes.

First it is possible to raise analogous questions for varieties defined over $k(t)$, not only over $k$.
In more geometric words, 
instead of considering only constant families $X\times_k \bP^1_k\to \bP^1_k$,
one could look at families $\scX\to U$ where $U$ is a non empty open
subset of $\bP^1$, and rational sections of them.

Another natural generalization would be of course to replace $\bP^1$ by a
curve of higher genus. Let us stress that most of the results described
in these notes extend without much difficulty to the higher genus case.

It is also possible to consider higher-dimensional generalizations of
the problem, see \eg \cite{Wan}.

\subsection{Batyrev's heuristic}\label{subsec:Bat} 

I thank Ana-Maria Castravet for interesting remarks and comments about
the content of this section.
We retain all the notations introduced in the previous section.
When the base field $k$ is finite, Batyrev, Manin, their collaborators and
subsequent authors made precise predictions about the asymptotic
behaviour of the sequence $\{\# \HOM(\bP^1,X,\cL,d)(k)\}_{d\in \bN}$.
Let us explain how Batyrev links these predictions to some heuristic
insights on the asymptotic geometric properties of the varieties 
$\{\HOM(\bP^1,X,y)\}_{y\in \NS(X)^{\vee}}$ (over an arbitrary field $k$).
We will restrict ourselves to varieties $X$ for which the following
hypotheses hold:
\begin{hyps}\label{hyps:X}
$X$ is a smooth projective variety whose anticanonical bundle $\antican{X}$
is ample, in other words $X$ is a Fano variety. The geometric Picard
group of $X$ is free of finite rank and the geometric effective cone of $X$ is
generated by a finite number of class of effective divisors\footnote{
When the characteristic of $k$ is zero, it is true, that the hypotheses on the Picard
group and on the effective cone automatically holds for a Fano
variety, the latter property being highly non
trivial.
}.
\end{hyps}
Recall that the effective cone is the cone generated
by the classes of effective divisors.
We will be mostly interested in the case where $\cL=\antican{X}$.
For the sake of simplicity, we
will assume in this section that the class $\class{\antican{X}}$ has index one in $\Pic(X)$,
that is, $\Min\{d,\,\class{\antican{X}}\in d\,\Pic(X)\}=1$.
Then a naïve version of the predictions of Manin et al.
is the asymptotic
\begin{equation}\label{eq:predman}
\# \HOM(\bP^1,X,\antican{X},d)(k)\simu{d\to +\infty} c\,d^{\,\rk(\Pic(X))-1}\,(\# k)^d.
\end{equation}
Here and elsewhere $c$ will always designate a positive constant
(whose value may vary according to the places where it appears).
There is also a version when $\antican{X}$ is replaced by any line
bundle $\cL$ whose class lies in the interior of the effective cone
(in other words, a so-called \termin{big} line bundle),
about which we will say a few words below.

We call it a naïve prediction since it was clear from the very beginning that 
\eqref{eq:predman} could certainly not always hold because of 
the phenomenon of accumulating subvarieties. One of the simplest
relevant examples is the exceptional divisor of the
projective plane blown-up at one point. One can check that with
respect to the anticanonical degree 'most' of
the morphisms $x\,:\,\bP^1\to X$ factor through the exceptional
divisor\footnote{One geometric incarnation of the predominance of
  the morphisms factoring through the exceptional divisor  $E$ is the
  following fact:  the components of $\HOM(\bP^1,X,\antican{X},d)$ of
  maximal dimension contain only morphisms which factor through $E$; this can be easily seen using
the toric structure of $X$ and the results described in the next part
of this text.
}.
Thus one is led 
to consider in fact the
sequence $\{\HOM_{U}(\bP^1,X,\antican{X},d)\}$ where $U$ is a dense
open subset of $X$ and 
$\HOM_{U}(\bP^1,X,\antican{X},d)$
designates the open subvariety of 
$\HOM(\bP^1,X,\antican{X},d)$
parametrizing those morphisms $\bP^1\to X$ of anticanonical
degree $d$ which do not factor through $X\setminus U$.
Similarly, one defines $\HOM_U(\bP^1,X,y)$ for every $y\in \Pic(X)^{\vee}$.

Now the 'correct' prediction should be that \eqref{eq:predman} holds for 
$\# \HOM_U(\bP^1,X,\antican{X},d)(k)$
if $U$ is a sufficiently small open dense subset of $X$\footnote{
One may (and will) also consider the case where
  the anticanonical bundle of $X$ is not necessarily ample, but only big,
  namely only assumed to lie in the interior of the effective cone; in this case
  $\HOM(\bP^1,X,\omega_X^{-1},d)$ is not always a quasi-projective
  variety, but $\HOM_{U}(\bP^1,X,\omega_X^{-1},d)$ is for any sufficiently
  small dense open set $U$, thus the refined prediction still makes
  sense in this context.

One must also stress that even with this refinement, the prediction has
already been shown to fail for certain Fano varieties (see
\cite{BaTs:conicbundles}; the proof is over a number field but may be
adapted 
to our setting). Nevertheless, the class of Fano varieties
for which the refined prediction holds might be expected to be quite
large; in particular one might still hope that it holds for every del
Pezzo surface; especially in the arithmetic setting, the analogous refined
prediction was shown to be true for a large number of instances of
Fano variety;  here is a (far from exhaustive) list of related work in
the arithmetic setting:
\cite{BaTs:manconj},
\cite{dlB:duke},
\cite{dlBBD},
\cite{dlBBP:Cha},
\cite{BrFou},
\cite{CLT_vect3},
\cite{FMT},
\cite{ShaTaTs:comp:semisimple},
\cite{Spe:Manin},
\cite{Sal:tammes},
\cite{StTs:twist},
\cite{Thu:Schu},
\cite{Thu:flag},
\cite{Pey:duke}.
}.

In order to `explain geometrically' the prediction \eqref{eq:predman}, Batyrev makes use of the following
heuristic: 
\begin{heur}\label{heur:bat}
A geometrically irreducible $d$-dimensional variety defined over a
finite field $k$ has approximatively $(\# k)^d$ rational points defined over $k$.
\end{heur}
Of course there is the implicit assumption that the error terms
deriving from this approximation will be negligible regarding our asympotic
counting problem. This heuristic may be viewed as a very crude
estimate deduced from the Grothendieck-Lefschetz trace formula
expressing the number of $k$-points of $X$ as an alternating sum of traces
of the Frobenius acting on the cohomology groups. 
It is also used by Ellenberg and Venkatesh in a somewhat
different 
counting problem, see \cite{EllVen:count}.

The next crucial ingredient of Batyrev's heuristic is the classical
result from deformation theory of morphisms $\bP^1\to X$ saying that 
every component of $\HOM_{U}(\bP^1,X,y)$
has dimension greater than or equal to 
the
`expected dimension' 
$\dim(X)+\acc{y}{\antican{X}}$
(see \eg \cite[chapter 2]{Deb:higher}). 

Let us choose a finite family of effective divisors of $X$ whose classes in $\Pic(X)$
generate the effective cone of $X$ and let $U$ be a dense open set
of $X$ contained in the complement of the union of the support of these divisors. 
Then any morphism $\bP^1\to X$ which does not
factor through $X\setminus U$ has an intrisic degree $y$ such that
$\acc{y}{D}\geq 0$ for every effective class $D$, in other words $y$
belongs to the dual $\ceff(X)^{\vee}$ of the effective cone.

For any algebraic variety $Y$, let us denote by $\scN(Y)$ the number
of its geometrically irreducible components of dimension $\dim(Y)$.
Assuming that $\scN(\HOM_{U}(\bP^1,X,y))$ is asymptotically constant,
that the dimension of $\HOM_{U}(\bP^1,X,y)$
coincides with the expected dimension, 
and that the above heuristic applies, the number of $k$-points of
$\HOM_{U}(\bP^1,X,\antican{X},d)$ can be approximated by
\begin{equation}
c.
\# \{y\in \ceff(X)^{\vee}\cap \Pic(X)^{\vee}, 
\acc{y}{\antican{X}}=d\}
.
(\# k)^{d+\dim(X)}
\end{equation}
and we will see in section \ref{subsec:sens} that we have the asymptotic
\begin{equation}
\# \{
y\in \ceff(X)^{\vee}\cap \Pic(X)^{\vee},
\acc{y}{\antican{X}}=d\}\simu{d\to +\infty} c.d^{\,\rk(\Pic(X))-1}.
\end{equation}
Thus the above geometric assumptions on the varieties $\HOM_{U}(\bP^1,X,y)$
together with the adopted heuristic are 'compatible' with Manin's
prediction.
Thus, as pointed out by Batyrev, it should be interesting to study
the asympotic behaviour of $\HOM(\bP^1,X,y)$ in terms of dimension and
number of components.
For example, one may raise the following
questions.
\begin{question}\label{ques:bat}
\begin{enumerate}
\item
Does there exist a dense open subset $U$ of $X$ such that
for any $y\in \ceff(X)^{\vee}\cap \Pic(X)^{\vee}$ with
$\acc{y}{\antican{X}}$ large enough, the dimension of $\HOM_{U}(\bP^1,X,y)$
is equal to $\acc{y}{\antican{X}}+\dim(X)$?
\item
Does there exist a dense open subset $U$ of $X$ such that
for any $y\in \ceff(X)^{\vee}\cap \Pic(X)^{\vee}$ with
$\acc{y}{\antican{X}}$ large enough, $\scN(\HOM_{U}(\bP^1,X,y))$ is
constant?
\end{enumerate}
\end{question}

Note that the condition $\acc{y}{\antican{X}}$ large enough may be
replaced by the condition $\acc{y}{\cL}$ large enough for any big line
bundle $\cL$. Note also that if the answer to
the above questions is positive, for any big line bundle $\cL$ 
one should have following \ref{heur:bat} the
heuristic estimation
\begin{equation}\label{eq:estim:hom:cl}
\# \HOM_{U}(\bP^1,X,\cL,d)(k)
\underset{d\to +\infty}{\approx}
c.
\sum_{\substack{y\in \ceff(X)^{\vee}\cap \Pic(X)^{\vee}, 
\\
\acc{y}{\cL}=d}
} (\# k)^{\acc{y}{\antican{X}}+\dim(X)}.
\end{equation}
Moreover, as we will explain in section \ref{subsec:sens}, the RHS of \eqref{eq:estim:hom:cl} is 
equivalent as $d\to \infty$ to
\begin{equation}
c.d^{\,b(\cL)-1}.(\# k)^{\,a(\cL).d}
\end{equation}
where $a(\cL)\eqdef \Inf\{a\in \bR,\quad a.\cL-\antican{X}\in \ceff(X)\}$
and $b(\cL)$ is the codimension of the minimal face of $\ceff(X)$
containing $a.\cL-\antican{X}$; note that $a(\antican{X})=1$ and $b(\antican{X})=\rk(\Pic(X))$.
Thus one obtains an heuristic prediction for the asympotic behaviour of 
$\# \HOM_{U}(\bP^1,X,\cL,d)(k)$, which is in fact the general
version (\ie not limited to the case of the anticanonical sheaf) 
of the prediction of Manin et al. alluded to above.
\begin{rem}
I thank Ana-Maria Castravet for pointing out to me the following.
Let $M$ be an irreducible component of $\HOM(\bP^1,X)$. 
By \cite[4.10]{Deb:higher}, if the evaluation map
$\text{ev}\,:\,\bP^1\times M\to X$ is dominant then $M$ has the
expected dimension. Hence it is clear that for any degree
$y$, there is a dense open subset $U$ of $X$ such that every component
of $\HOM_U(\bP^1,X,y)$ has the expected dimension. But $U$ depends a
priori on $y$. 

Nevertheless, when $X$ has a dense open subset $U$ isomorphic to a
homogeneous variety, by using the
group action one sees immediatly that  
for every component $M$ of $\HOM_{U}(\bP^1,X,y)$ the evaluation morphism $\bP^1\times M\to X$ is
dominant. Hence the answer to the first question is affirmative in
this case, provided  that $\HOM_{U}(\bP^1,X,y)$ is non empty. 
More generally, this holds as soon as the subset $X^{\text{free}}$
defined in [\opcit, Proposition 4.14] is a dense open subset of $X$.

In particular, the first question has an affirmative answer for toric
varieties, provided that $\HOM_{U}(\bP^1,X,y)$ is non empty. 
In the next section, we will prove that 
the answers to \ref{ques:bat} are positive for toric varieties.
The proof does not rely on deformation theory but on the so-called homogeneous
coordinate ring. 
\end{rem}
\begin{rem}\label{rem:ans:ques:bat}
As indicated just before, later in these notes we will see that 
the answer to \ref{ques:bat} is affirmative
for toric varieties.
It is also known to be affirmative for some
other particular classes of varieties.

If $X$ is a homogeneous variety, then for every $y\in
\Pic(X)^{\vee}\cap \ceff(X)^{\vee}$, the moduli space $\HOM(\bP^1,X,y)$ is irreducible of
the expected dimension (independent works of Thomsen,
Kim-Pandharipande, and Perrin, see \cite{Tho:irr,KimPan:conn,Per:rathom}). 
This is also the case for general hypersurfaces of low degree (Harris,
Roth and Starr, see \cite{HarRotSta:rathyp}).

If $X$ is a blowing-up of a product of projective spaces and $U$ is
the complement of the exceptional divisors, it is shown 
by Kim, Lee and Oh in \cite{KimLeeOh:rat}
that, under suitable extra numerical assumptions on the degree $y$,  
$\HOM_U(\bP^1,X,y)$ is irreducible of the expected dimension.

But now let $X$ be the moduli space of stable rank two vector bundles on a
curve, with fixed determinant of degree $1$. This is a Fano variety
with Picard group of rank 1. Castravet's results in
\cite{Cas:rat} imply that the answer to the second part of question 
\ref{ques:bat} is negative for $X$ if the genus $g$ of the curve is even: 
for any sufficiently small open set $U$, $\HOM_U(\bP^1,X,d)$ has two
components if $g-1$ divides the degree $d$, and one otherwise. 
It is perhaps worth noting 
that the morphisms in the extra component appearing for degrees which
are multiple of $g-1$ 
are generically free, but not very free.

For the counter-example of Batyrev and Tschinkel to Manin's conjecture
\cite{BaTs:conicbundles}, which is a fibration in cubic surfaces, it is likely that the answer to
question \ref{ques:bat} is also negative.
\end{rem}

\subsection{A generating series: the degree zeta function}\label{subsec:genser}

In the previous sections, some predictions were formulated
about the asymptotic behaviour of some particular specializations of
the sequence $\{\HOM_{U}(\bP^1,X,\antican{X},d)\}$, 
namely the ones obtained by considering the dimension, the number of
geometrically irreducible components of maximal dimension and, in case
$k$ is finite, the number of $k$-points.
One may of course wonder whether there exist predictions for
other specializations, for instance the one deriving from the
topological Euler-Poincaré characteristic with compact support.
Concerning the latter, note that it has at least one common feature with the
specialization 'number of points over a finite field': they are both
examples of maps from the set of isomorphism class of algebraic
varieties to a commutative ring, which are additive in the sense that  
the relation $f(X)=f(X\setminus F)+f(F)$ holds whenever $X$ is a
variety and $F$ is a closed
subvariety of $X$, and satisfying moreover the relation
$f(X\times Y)=f(X)\,f(Y)$. 
We call such maps \termin{generalized Euler-Poincaré
characteristic}, abbreviated in GEPC in the following . 
We are naturally led to consider the universal
target ring for GEPC: as a group it is generated by symbols $\class{X}$
where $X$ is a variety modulo the relations $\class{X}=\class{Y}$
whenever $X\isom Y$ and $\class{X}=\class{F}+\class{X\setminus F}$
whenever $F$ is a closed subvariety of $X$ (the latter  are
often called \termin{scissors relations}). We endow it with a ring
structure by setting $\class{X}\class{Y}\eqdef\class{X\times Y}$. The
resulting ring is called the \termin{Grothendieck ring of varieties}\footnote{
This ring, already
considered by Grothendieck in the sixties (see \cite{corrGrotSer}),
has attracted a huge renewal of interest since Kontsevich used it
fifteen years ago as a key ingredient of his theory of motivic integration.
Its structure turns out to be quite difficult to understand.
Let us just cite a celebrated open question, which has connections
with the Zariski simplification problem: is the class of the affine
line in the Grothendieck ring a zero divisor? 
}
and denoted
by $K_0(\Var_k)$. Thus the datum of a GEPC with value in a commutative
ring $A$ is equivalent to the datum of a ring morphism
$K_0(\Var_k)\to A$.

For an algebraic variety $V$ we denote by $\class{V}$ its class in the
Grothendieck ring.
Now a way to handle  `all-in-one' every possible
specialization of the family 
$\{\HOM_{U}(\bP^1,X,y)\}_{y\in\ceff(X)^{\vee}}$ 
deriving from a GEPC is
to look at the family $\{\class{\HOM_{U}(\bP^1,X,y)}\}_{y\in \ceff(X)^{\vee}}$
which is thus a family with value in
the ring $K_0(\Var_k)$. Let us stress that although this is not obvious at first
sight, the knowledge of the class $\class{Y}$ of an algebraic variety
$Y$ allows also to
recover $\dim(Y)$ and $\scN(Y)$ (though $\dim$ and $\scN$ are
certainly not GEPC), see below.

A classical and useful tool when dealing with a sequence of complex numbers
$\{a_n\}$ is the associated generating series $\sum a_n\,t^n$.
Inded, it is often possible to get informations about the
analytic behaviour of the meromorphic function defined by the series, which in
turn yields by Tauberian theorems informations about the asymptotical
behaviour of the sequence itself.

We can try a similar approach in our context by forming, 
for every big line bundle $\cL$ and every sufficiently small dense open subset $U$,
the generating series
\begin{equation}
Z_U(X,\cL,t)\eqdef \sum_{d\geq 0}
\class{\HOM_{U}(\bP^1,X,\cL,d)}\,t^d
\in K_0(\Var_k)[[t]]
\end{equation}
whose coefficients lie in the Grothendieck ring of varieties.
We call it the  
\termin{geometric $\cL$-degree zeta function}. 
Applying a GEPC $\chi\,:\,K_0(\Var_k)\to A$ 
to its coefficients yields a specialized degree zeta function with
coefficients in $A$, denoted by $Z^{\chi}_U(X,\cL,t)$.
If $k$ is finite and the GEPC is $\#_k$ (that is, the morphism
`number of $k$-points'), we recover the generating
series associated to the counting of points of bounded
$\cL$-degree/height, which we will name the \termin{classical $\cL$-degree
zeta function}.

It will also be interesting to consider the intrisic degree zeta
function, which is a generating series keeping track of the
decomposition \eqref{eq:finite:decomposition}, from 
which  the various $\cL$-degree zeta functions may be recovered by specialization.
First we need some preliminaries about monoïd algebras.

Let $N$ be a $\bZ$-module of finite rank and $\cone$ be a rational polyedral cone 
of $N$, that is, $\cone$ is a convex cone in $N\otimes \bR$ generated by a finite
number of elements of $N$. We moreover assume that $\cone$ is strictly
convex, \ie $\cone \cap -\cone=\{0\}$.
Let $A$ be a commutative ring. Recall  how the $A$-algebra
$A[\cone \cap N]$ may be defined: it is the set of 
families $(a_y)\in A^{\cone\cap N}$ endowed with
the componentwise addition and the multiplication
defined by $(a.b)_y\eqdef \sum_{y_1+y_2=y}a_{y_1}b_{y_2}$; the point
is that 
since
$\cone$ is strictly convex there is only a finite number of
pair $(y_1,y_2)\in (\cone\cap N)^2$ such that $y_1+y_2=y$. 
The element $(a_y)$ will be written $\sum a_y\,t^y$.
If $x_0$ is an element of the interior of $\cone^{\vee}$ there is a
well defined morphism $\text{sp}_{x_0}\,:A[\cone \cap N]\to A[[t]]$
sending $t^y$ to $t^{\acc{y}{x_0}}$.
The point is that the level sets $\{y\in \cone\cap N,\,\acc{y}{x}=d\}_{d\in \bN}$
are finite.

Now we can define the intrisic geometric degree zeta function
\begin{equation}
Z_U(X,t)\eqdef \sum_{y\in \ceff(X)^{\vee}\cap \Pic(X)^{\vee}}
\class{\HOM_{U}(\bP^1,X,y)}\,t^y
\in K_0(\Var_k)[\ceff(X)^{\vee}\cap \Pic(X)^{\vee}].
\end{equation}
For a line bundle $\cL$ whose class is big, applying $\text{sp}_{\cL}$ to $Z_U(X,t)$, one recovers the  geometric
$\cL$-degree zeta funtion $Z_U(X,\cL,t)$. We can also specialize $Z_U(X,t)$ through various GEPC;
note that these specializations commute with $\text{sp}_{\cL}$.

\subsection{Some more examples of GEPC}

So far we have given only two examples of GEPC, the topological Euler Poincaré
characteristic with support compact and the number of $k$-rational points
when $k$ is a finite field. Both of them have of course a
cohomological flavour. It turns out that cohomology theories 
are a natural reservoir of GEPC.
Let us content ourselves to describe one particular example: fix a prime $\ell$ distinct from the characteristic of $k$,
and a separable closure $\sep{k}$ of $k$.
To every variety $X$ defined over $k$ are attached 
its $\ell$-adic cohomology groups, which form a sequence of $\Ql$-vector spaces
$\{H^n(X_{\sep{k}},\Ql)\}_{n\in \bN}$ equipped with a continuous
action of the absolute Galois group $\Gal(\sep{k}/k)$. 
If $X$ is proper, the $H^n(X_{\sep{k}},\Ql)$ are finite dimensional
 and vanish for $n>2\,\dim(X)$.
When $X$ is proper and smooth, one defines its 
\termin{$\ell$-adic Poincaré polynomial} by
\begin{equation}
\Poinc_{\ell}(X)\eqdef\sum_{n\geq 0}\dim(H^n(X_{\sep{k}},\Ql))t^n.
\end{equation}
One can show that there is a ring morphism $\Poinc_{\ell}\,:\,K_0(\Var_k)\to \bZ[t]$
extending $\Poinc_{\ell}$ (in characteristic zero one may use the
fact, proven by F.Bittner, that the class
of smooth projective varieties, modulo the relations derived from
blowing up along a smooth subvariety, form a presentation of
$K_0(\Var_k)$; when $k$ is finitely generated, one uses the weight
filtration on the $\ell$-adic cohomology groups with compact support;
in the general case one reduces to the latter by a limiting process).
For every algebraic variety $X$, we have
$\deg(\Poinc_{\ell}(\class{X})=2\,\dim(X)$ thus the knowledge of
$\Poinc_{\ell}$ allows to recover the dimension.
In case $k$ is a subfield of $\bC$, comparison theorems between
$\ell$-adic cohomology and Betti cohomology show that the topological
Euler-Poincaré characteristic factors through $\Poinc_{\ell}$.

In fact one can even define a refined $\ell$-adic Poincaré polynomial 
$\Poinc^{\text{ref}}_{\ell}\,:\,K_0(\Var_k)\to K_0(\Gal(\sep{k}/k)-\Ql)$
which satisfies for $X$ smooth and proper the relation
\begin{equation}
\Poinc^{\text{ref}}_{\ell}(X)=\sum_{n\geq 0}\class{H^n(X_{\sep{k}},\Ql)}\,t^n.
\end{equation}
Here $K_0(\Gal(\sep{k}/k)-\Ql)$ stands for the Grothendieck ring of the category
of finite dimensional $\Ql$-vector spaces equipped with a
continuous action of the absolute Galois group.
If $k$ is finite,  one can recover from this refined Poincaré
polynomial the GEPC $\#_k$ by
applying the trace of the Frobenius and evaluating at $t=-1$.
In general, one can recover the number of geometrically irreducible components
of maximal dimension from the refined Poincaré polynomial: indeed, for any
algebraic variety $X$, $\scN(X)$ is the dimension of
$(a_{2\,\dim(X)})^{\Gal(\sep{k}/k)}$, where $a_{2\,\dim(X)}$ is the
leading coefficient of $\Poinc^{\text{ref}}_{\ell}(X)$.

If the characteristic of $k$ is zero, there exists by the work of
Gillet, Soul\'e et al. a universal `cohomological' GEPC $\chi_{mot}$ whose target is
the Grothendieck ring of the category of pure motives.
Recalling the construction and the basic properties of this category is beyond the
scope of these notes (see \cite{And:mot} for  a nice introduction). Let us simply stress that one of the guiding
lines of the theory of motives is that it should be a kind of universal
cohomological theory for algebraic varieties, which would allow to
recover any classical cohomological theory by
specialization. Unfortunately, later in these notes, we will be obliged
to work with the specialization $Z^{\chi_{mot}}_U(X,t)$ rather than with the initial
geometric degree function. Though this is certainly inacurrate in many
senses, the reader unaware of motives may think of the Grothendieck
ring of motives as if it was the Grothendieck ring of varieties
(localized at the class of the affine line, see below).

\subsection{Completion of the Grothendieck ring of varieties}

We will  now define a topology on (a localization of) the Grothendieck 
ring of algebraic varieties. 
This seems
necessary if we want to
talk about the `analytic behaviour' of the geometric zeta function.
The topology we will consider is the one proposed by Kontsevich for
his construction of motivic integration. 
We denote by $\bL$ the class of the affine line $\bA^1$ in the
Grothendieck ring of varieties\footnote{The letter L stands for
  Lefschetz. This is because the image of $\class{\bA^1}$ by the
  morphism $\chi_{\text{mot}}$ alluded to above coincides with the
  class of the so-called Lefschetz motive.}.
We denote by $\mk$ the localization of $\kovark$ 
with respect to $\bL$ (recall that it is not known whether the
localization morphism $\kovark\to \mk$ is injective).

Intuitively, the idea behind the definition given below might be understood as follows: if $k$ is finite with cardinality
$q$, the image of $\bL$ by the `number of $k$-points' morphism is $q$;
since the series $\sum_{n\geq 0} q^{-n}$ converges, we would like by
analogy the series $\sum_{n\geq 0} \bL^{-n}$ to be convergent too.
Let us stress that this is really a loose analogy here, since the
`number of $k$-points' morphism will not be continuous with the respect to
the topology we will define, and thus will not extend to the completion
of the Grothendieck ring with respect to this topology.

We filter the elements in  $\mk$ by their `virtual
dimension': 
for $n\in \bZ$, let $\fil^{n}\mk$ be the
 subgroup of $\mk$ generated by those elements which may be written as 
$\bL^{-i}\symb{X}$, where $i\in \bZ$ and $X$ is a $k$-variety satisfying $i-\dim(X)
\geq n$ (elements whose virtual dimension is less than or equal to $-n$). Thus
$\fil^{\bullet}$ is a decreasing filtration, and $\cupu{n\in \bZ} \fil^{n}=\mk$.

Let $\hmk$ be the completion of $\mk$ with respect to the topology
defined by the dimension filtration (that is, the topology for which
$\{\fil^n\mk\}$ is a fondamental system of neighboroods of the
origin). In other words we
have 
\begin{equation}
\hmk=\underset{\longlto}{\lim}\quad \mk/\fil^n \mk.
\end{equation} 
Thus an element of $\hmk$ may
be represented as an element $(x_n)\in \prod_{n\in \bZ} \mk/\fil^n \mk$ such
that for every integers $n$ and $m$ satisfying $m\geq n$ we have
$\pi_{m}^n(x_m)=x_n$, where $\pi_{m}^n$ is the natural projection
$\mk/\fil^m\mk\to \mk/\fil^n\mk$. We have the natural
completion morphism $\mk\to \hmk$ and a natural filtration on $\hmk$ coming
from the filtation $\fil^{\bullet}$.

A priori $\hmk$ inherits only the group structure of the ring $\mk$. 
Now we define a product.
Let $x=(x_n)$ and $y=(y_n)$ be two
elements in $\hmk$ and $M$ be an integer such that $x,y\in \fil^M\mk$
(that is, we have $x_n=y_n=0$ for $n\leq M$). Let $n$ be an integer
and $\wt{x_{n-M}}$, $\wt{y_{n-M}}$ be liftings of $x_{n-M}$ and
$y_{n-M}$ to $\mk$ respectively. Define $(x.y)_n$ as the class of $\wt{x_{n-M}}.\wt{y_{n-M}}$
modulo $\fil^n\mk$. The inclusions $\fil^n\mk.\fil^m\mk\subset \fil^{n+m}\mk$
show that this does not depend on the made choices and that this
endows $\hmk$ with a ring structure compatible with the completion morphism.

For an element $x\in \mk$ (respectively $x\in \hmk$), define 
\begin{equation}
\dim(x)=-\frac{1}{2}\Sup\{n,\quad x\in \fil^n\mk\}.
\end{equation}
Using the $\ell$-adic Poincaré  polynomial, one may check that if $X$
is a $k$-variety then we have indeed $\dim(\class{X})=\dim(X)$. 
Note that for every integer $n\in \bZ$ one has $\dim(\bL^{n})=n$.
One may wonder whether there are nonzero elements in $\mk$ with
dimension $-\infty$, in other words whether the completion morphism is
injective: this is an open question.

Note that a series $\sum_{n\geq N} x_n$ whose terms belong to $\hmk$
converges in $\hmk$ if and only if
$\dim(x_n)$ goes to $-\infty$. For example $\sum_{n\geq 0}\bL^{n}$ 
converges, and one checks that its limit is the inverse of $1-\bL$ in $\hmk$.

Note also that if $k$ is finite with cardinality $q$ the morphism $\#_k \,:\,\mk\to \bZ[q^{-1}]\subset \bR$ is not
continuous when we endow $\bR$ with the usual topology; 
for example, for {\em any} sequence of integers $\{c_n\}$, the sequence
$c_n\,\bL^{-n}$ converges to zero with respect to our topology. 
Thus there is no hope to extend $\#_k$ to a morphism $\hmk\to \bR$.

By contrast, the morphism $\Poinc_{\ell}\,:\,\mk\to \bZ[t,t^{-1}]$ is continuous when
$\bZ[t,t^{-1}]$ is endowed with the topology associated to the filtration
by the degree, and thus extends  to a morphism $\hmk\to \bZ[[t^{-1}]]_{(t)}$.
\begin{rem}\label{rem:L:trans}
Let $A$ be $\kovark$, $\mk$ or $\hmk$.
Using the Poincaré polynomial, one sees 
that the morphism $\bZ\to A$ mapping $1$ to $1$ is an injection
and that $\bL\in A$ is transcendent over $\bZ$. 
\end{rem}

\subsection{Some  questions about the analytic behaviour of
  the degree zeta function}\label{subsec:sens}
Let $N$ be a $\bZ$-module of finite rank $d$ and $\cone$ be a rational
polyedral strictly convex cone of $N$.
We set
\begin{equation}
Z(N,\cone,t)\eqdef\sum_{y\in N\cap \cone}t^y \in \bZ[\cone\cap N].
\end{equation}
When $\cone$ is regular, that is, generated by a subset $\{y_1,\dots,y_d\}$ of a basis of $N$,
a straightforward computation shows the relation
\begin{equation}\label{eq:zcone:regular}
Z(N,\cone,t)=\prod_{i=1}^d\frac{1}{1-t^{\,y_i}}.
\end{equation}
In general, it is known that $\cone$ can be
written as an `almost disjoint' union of regular cones
(more precisely as the support of a regular fan, see below)
thus $Z(N,\cone,t)$ will equal a finite sum
of expressions of the type \eqref{eq:zcone:regular}. 
For any element $x\in N^{\vee}$ lying in the
relative interior of $\cone^{\vee}$, 
we have
\begin{equation}
\text{sp}_xZ(N,\cone,t)=\sum_{y\in N\cap \cone}t^{\acc{y}{x}} \in \bZ[[t]].
\end{equation}
From the above decomposition, we deduce that $\text{sp}_xZ(N,\cone)$ is a rational
function of $t$, with a pole of order $\dim(\cone)$ at $t=1$, and whose
other poles are roots of unity. 
For $x$ in $N^{\vee}$, define the index of $x$ in $N^{\vee}$ by
\begin{equation}
\inde(N^{\vee},x)\eqdef\Max\{d\in \bN,\,x\in d\,N^{\vee}\}.
\end{equation}
If  $\inde_{N^{\vee}}(x)=1$, the order of any pole of $\text{sp}_xZ(N,\cone)$ 
distinct from $1$ is less than $\dim(\cone)$. In general, a
similar statement holds for the series $\text{sp}_xZ(N,\cone)\left(t^{\frac{1}{\inde(N^{\vee},x)}}\right)$.

Letting $\alpha(N,\cone,x)$ be the leading term of $\text{sp}_xZ(N,\cone)$ at the
critical point $t=1$, 
we obtain by
Cauchy estimates 
\begin{equation}\label{eq:first:asymp}
\#\{y\in N\cap \cone,\quad
\acc{y}{x}=\,
\inde(N^{\vee},x)
\,d\}
\simu{d\to +\infty} 
\alpha(N,\cone,x)\,
[
\inde(N^{\vee},x)\,
d]^{\dim(\cone)-1}.
\end{equation}
If $x_0$ is an element of $N$, one may also consider
\begin{equation}
Z(N,\cone,x_0,t)\eqdef\sum_{y\in N\cap \cone}\rho^{\acc{y}{x_0}}t^y \in \bZ[\rho][\cone\cap N].
\end{equation}
Assume that $x_0$ lies in the interior of $\cone^{\vee}$. 
For every
element $x$ lying in the interior of $\cone^{\vee}$, 
let $a(x_0,x)\eqdef \Inf\{a,\quad a.x-x_0\in \cone^{\vee}\}$ and $b(x_0,x)$ be
the codimension of the minimal face of $\cone^{\vee}$ containing $a(x_0,x).x-x_0$.
Using similar arguments as above, one checks,
letting $\alpha(N,\cone,x_0,x)$ be the leading term of $\text{sp}_xZ(N,\cone,x_0)$ at the
critical point $t=\rho^{-a(x_0,x)}$, that the following generalisation
of \eqref{eq:first:asymp} holds:
\begin{equation}
\sum_{\substack{
y\in N\cap \cone,
\\
\acc{y}{x}=\,\inde(N^{\vee},x)\,d
}}
\rho^{\acc{y}{x_0-a(x_0,x).x}}
\simu{d\to +\infty} 
\alpha(N,\cone,x_0,x)\,[\inde(N^{\vee},x)\,d]^{b(x,x_0)-1}.
\end{equation}

\begin{defi}\label{defi:controlled}
Let $Z(t)\in \bC[[t]]$, $\rho$ a positive real number and $d$ a nonnegative integer.
We say that $Z(t)$ is \termin{strongly $(\rho,d)$ controlled} if $Z(t)$
converges absolutely in the open disc $\abs{t}<\rho$ and the associated
holomorphic function extends to a meromorphic funtion on the open disc $\abs{t}<\rho+\eps$
for a certain $\eps>0$, whose poles on the circle $\abs{t}=\rho$
have order bounded by $d$. We say that $Z(t)$ is
\termin{$(\rho,d)$-controlled} if it is bounded by a strongly
$(\rho,d)$-controlled series (we say that $\sum a_n t^n$ is bounded by
$\sum b_n t^n$ if $\abs{a_n}\leq \abs{b_n}$ for all $n$). 
\end{defi}
Note that by Cauchy estimates, if $d\geq 1$ then
$\sum a_nt^n$ is $(\rho,d)$-controlled if and only if
the sequence $a_n.n^{1-d}.\rho^{n}$ is bounded.

We are now in position to state a question about the analytic
behaviour of the classical degree zeta function. For the sake of
simplicity, we will restrict ourselves to the case of the
anticanonical degree. The following may be seen as a
version of a refinement by Peyre of a question raised by Manin.

\begin{question}\label{ques:fin}
Let $k$ be a finite field of cardinality $q$. Let $X$ be a $k$-variety 
satisfying hypotheses \ref{hyps:X}.
Does there exist a positive real number $c$ and a dense open subset
$U$ such that the series
\begin{equation}\label{eq:diff}
\#_k\text{sp}_{\antican{X}}Z_U(X,t)
-c.\text{sp}_{\antican{X}}Z(\Pic(X)^{\vee},\ceff(X)^{\vee})(q\,t)
\end{equation}
is $(q^{-1},\rk(\Pic(X))-1)$-controlled (respectively strongly 
$(q^{-1},\rk(\Pic(X))-1)$-controlled)?
\end{question}
Of course the question may be refined by asking whether the result
holds for every sufficiently small dense open subset.

Note that an affirmative answer yields the estimate 
\begin{multline}
\# \HOM_{U}(\bP^1,X,\antican{X},\inde(\Pic(X),\antican{X})\,d)(k)
\\
\simu{d\to +\infty}
c.\alpha(\Pic(X)^{\vee},\ceff(X)^{\vee},\class{\antican{X}})\,
[\inde(\Pic(X),\antican{X})\,d]^{\rk(\Pic(X))-1}\,q^{\inde(\Pic(X),\antican{X})\,d}.
\end{multline}
Of course, 
in case \eqref{eq:diff}
is strongly
$(q^{-1},\rk(\Pic(X)-1)$-controlled, we get a more
precise asymptotic expansion.

Let us add that 
there exists a precise description of the expected value of the
constant $c$ (see at the end of section \ref{subsec:dzf:tv}).

Now we turn to the search for a geometric analog of the previous question.
We adopt the following definition.
\begin{defi}\label{defi:cont:mot}
Let $Z(t)\in \hmk[[t]]$, $r\in \bZ$ and $d$ a nonnegative integer.
We say that $Z(t)$ is  \termin{$(\bL^{-r},d)$ controlled} if it
may be written as a finite sum $\sum_{i\in I}Z_i(t)$ such that for
every $i\in I$, there exist $d_i\leq d$ and $d_i$ positive integers
$a_{i,1},\dots,a_{i,d_i}$ such that the series
\begin{equation}
\prod_{1\leq e\leq d_i} (1-\bL^{\,r\,a_{i,e}}\,t^{a_{i,e}})\,Z_i(t)
\end{equation}
converges at $t=\bL^{-k}$.
\end{defi}
This definition is to be thought as a loose analog of definition \ref{defi:controlled}. 
\begin{question}\label{ques:mot}
Let $k$ be a field and $X$ be a $k$-variety 
satisfying hypotheses \ref{hyps:X}.
Does there exist
a nonzero element $c\in \hmk$
and a dense open subset
$U$ such that the series
\begin{equation}\label{eq:series}
\text{sp}_{\antican{X}}.Z_U(X,t)-
c.
\text{sp}_{\antican{X}}Z(\Pic(X)^{\vee},\ceff(X)^{\vee})(\bL\,t)
\end{equation}
is $(\bL^{-1},\rk(\Pic(X))-1)$-controlled?

Does the constant $c$ have an interpretation analogous to the one in the classical case?
\end{question}
Regarding tauberian statements, it is worth noting that unfortunately the situation is not as
comfortable as in the case of a finite field. One might want for
example to deduce from an affirmative answer to the latter question
informations about the asymptotic behaviour of the dimension and the
number of irreducible components of maximal dimension of
$\HOM_{d,U}(\bP^1,X)$, but one may check that the only statement one
is able to derive is the less precise inequality
\begin{equation}
\overline{\lim}\,\,\frac{\dim(\HOM_{U}(\bP^1,X,\antican{X},d))}{d}\leq 1.
\end{equation} 
When studying the case of a toric variety $X$, we
will in fact be able to answer affirmatively to questions
\ref{ques:bat} long before we are in position to do so for question \ref{ques:mot}.

To partially solve this issue, one may consider a variant of
question \ref{ques:mot}, suggested by Peyre and keeping track of the
intrisic degree. Indeed, following Batyrev's heuristic, in case $k$ is
finite the quantity
\begin{equation}
\# \HOM_{U}(\bP^1,X,y)\,q^{-\acc{y}{\antican{X}}}
\end{equation}
should have a positive limit when $\acc{y}{\antican{X}}\to +\infty$.
Peyre pointed out that it seemed sensible to raise the following questions.
\begin{questions}\label{ques:fin:bis}
\begin{enumerate}
\item
Let $k$ be a finite field of cardinality $q$. Let $X$ be a $k$-variety 
satisfying hypotheses \ref{hyps:X}.
Does there exist a positive real number $c$ and a dense open subset
$U$ such that 
\begin{equation}\label{eq:diff:bis}
\lim_{
\substack{
~\\
y\in \Pic(X)^{\vee}\cap \ceff(X)^{\vee}
\\
\dist(y,\partial \ceff(X)^{\vee})\to +\infty
}
}
\# \HOM_{U}(\bP^1,X,y)(k)\,q^{-\acc{y}{\antican{X}}}
=c\quad ?
\end{equation}
\item
Let $k$ be any field and $X$ be a $k$-variety 
satisfying hypotheses \ref{hyps:X}.
Does there exist a nonzero element $c\in \hmk$
and a dense open subset
$U$ such that 
\begin{equation}\label{eq:series:bis}
\lim_{
\substack{
~
\\
y\in \Pic(X)^{\vee}\cap \ceff(X)^{\vee}
\\
\dist(y,\partial \ceff(X)^{\vee})\to +\infty
}
}
\class{\HOM_{U}(\bP^1,X,y)}\,\bL^{-\acc{y}{\antican{X}}}=c\quad ?
\end{equation}
\end{enumerate}
\end{questions}
\begin{rem}
If \eqref{eq:series:bis} holds, one can check using the
Poincaré polynomial that the answer to \eqref{ques:bat} is positive
provided that $\dist(y,\partial \ceff(X)^{\vee})\to +\infty$.
The latter condition if of course stronger than the mere condition 
$\acc{y}{\antican{X}}\to +\infty$.

We will see in the case of toric varieties why the answers to 
questions \ref{ques:fin:bis} can not be expected in general to be affirmative
under the mere assumption $\acc{y}{\antican{X}}\to +\infty$. 

Anyway, Castravet's results (see remark
\ref{rem:ans:ques:bat}) imply that 
\eqref{eq:series:bis} can not hold in general.
\end{rem}
\begin{rem}
Ellenberg raised the question whether the existence of the limits
in \eqref{eq:diff:bis} or \eqref{eq:series:bis} could be explained by a
phenomenon of homological stability akin to the one established in 
\cite{EVW:Hurwitz}.
\end{rem}

\section{The case of toric varieties}

In this section we explain how one can deal with the previously introduced problem in the case of toric varieties.
The crucial tool will be the so-called homogeneous
coordinate ring introduced by Cox in the toric case and, as we will
see in the last section, generalized by subsequent authors to other varieties.

But first let us take a little moment to explain very informally to
what extent the homogeneous coordinate ring will be helpful.
Basically, what can be done if one wants to describe the variety
$
\HOM(\bP^1,X,\cL,d)
$
for $\cL$ a very ample line bundle and $d$ an integer~? 
One of the most natural approach is probably to fix an embedding $\iota\,:\,X\hookrightarrow \bP^n$ 
such that $\iota^{\ast}\str{\bP^n}(1)\isom \cL$, thus identifying $X$
with a closed subvariety of $\bP^n$ described by homogeneous equations $\{F_1=\dots=F_r=0\}$.
The points of $\HOM(\bP^1,X,\cL,d)$ are then in one-to-one correspondence
with the $(n+1)$-uples $(P_0,\dots,P_n)$ of homogeneous polynomials
with two variables and degree $d$ (modulo multiplication by a nonzero
scalar), having no common root in an
algebraic closure of $k$, and satisfying the equations 
\begin{equation}\label{eq:naive:equations}
F_1(P_0,\dots,P_n)=\dots=F_r(P_0,\dots,P_{n})=0.
\end{equation}
This allows to  describe explicitely $\HOM(\bP^1,X,\cL,d)$ as a locally
closed subset of $\bP^{(d+1)(n+1)-1}$.

This elementary approach has at least two drawbacks:
\begin{itemize}
\item
even if it turns out to be fruitful for a particular choice of $\cL$,
for another choice of line bundle the equations of the embedding will change and
everything will have to be redone
\item
the equations \eqref{eq:naive:equations} will be a priori rather
complicated, and thus probably  not very helpful to understand the geometry of 
$\HOM(\bP^1,X,\cL,d)$; in particular, one hardly imagine how the decomposition \eqref{eq:finite:decomposition}
with respect to the intrisic degree could be recovered naturally
from these equations. 
\end{itemize}
The homogeneous coordinate ring of $X$ will, in some sense, solve
completely the first issue and the second part of the second issue.
The loose idea is that this ring will contain all the informations
about every possible embeddings of $X$ in a projective space, which 
 will allow us, roughly speaking, to treat all of them simultaneously.

On the other hand, the first part of the second issue will in general
still cause some trouble. We will still have to face equations of
the shape \eqref{eq:naive:equations}, which may be rather hard to deal
with (though now these equations are 'independent of the choice of the
line bundle'). Nevertheless, we will see that in the toric case, the
situation simplifies dramatically, since there are 'no equation' for
the homogeneous coordinate ring. 

\subsection{Toric geometry}\label{subsec:tor:gemo}
Here we recall some basic facts about toric geometry. 
Proofs will be omitted or very roughly sketched, and are
easily accessible in the classical references on the topic
(\cite{Ful:toric,Oda:conv,Ew}).

A (split) algebraic torus is a group variety isomorphic to a product
of copies of the multiplicativ groupe $\bG_m$. 
A (split) toric variety is a normal equivariant (partial) compactification of
an algebraic torus. In other words, it is a normal algebraic variety
endowed with an algebraic action of an algebraic torus $T$ and possessing
an open dense subset $U$ isomorphic to $T$ in such a way that the action
of $T$ on $U$ identifies with the action of $T$ on itself by translations.

\begin{exs}\label{exs:toric:vtes}
$\bA^n$ on which $\bG_m^n$ acts diagonally, $\bP^n$ on which $\bG_m^n$
acts by 
\begin{equation}
(\lambda_1,\dots,\lambda_n)(x_0:\dots:x_n)=(x_0:\lambda_1\,x_1:\dots:\lambda_n\,x_n).
\end{equation}
Now blow up $\bP^2$ at a $\bG_m^2$-invariant point, for example
$(0:0:1)$, yielding a variety $X$; then the $\bG_m^2$-action on
$\bP^2$ extends to $X$, making $X$ a compactification of $\bG_m^2$.
\end{exs}

\begin{rem}
A non necessarily split algebraic torus is a group variety which
becomes isomorphic to a split torus over an algebraic closure of the
base field. 
Though the
case of nonsplit toric varieties, that is, compactifications of
non necessarily split tori, certainly deserves consideration in the
context of our problem, we will stick in these notes to the case
of split toric varieties.
\end{rem}

Let $T\isom \bG_m^d$ be a split torus of dimension $d$. 
The group $\carac{T}$ of algebraic characters of $T$, that is, of
algebraic group morphism $T\to \bG_m$, is a free module of finite rank
$d$. Note that the natural morphism $\carac{T}\to k[T]^{\inv}/k^{\inv}$ is
an isomorphism.

\begin{prop}
Let $X$ be a smooth projective equivariant compactification of $T$,
and $U$ its open orbit. Then $X\setminus U$ is the union of a finite number $\{D_i\}_{i\in I}$
of $T$-invariant irreducible divisors defined over $k$. We call the $D_i$'s the \termin{boundary divisors}.
The map $D_i\mapsto \str{X}(D_i)$ induces an exact sequence
\begin{equation}\label{eq:exsq}
0\to k[T]^{\inv}/k^{\inv}\to \oplusu{i\in I}\bZ D_i\to \Pic(X)\to 0.
\end{equation}
\end{prop}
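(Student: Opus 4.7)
The plan is to prove the two assertions separately, first identifying the boundary as a union of divisors, then constructing the exact sequence map by map.

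For the decomposition of $X\setminus U$: I would invoke the classical theory of toric varieties (Sumihiro's theorem), which supplies a cover of $X$ by $T$-stable affine open subsets, and thereby a fan $\Sigma$ in $N_{\bR}=\Hom(\carac{T},\bR)\otimes\bR$ encoding $X$. Projectivity of $X$ forces $\Sigma$ to be complete with finitely many cones; the orbit-cone correspondence then identifies the $T$-orbits of $X$ with cones of $\Sigma$, codimension of the orbit equal to dimension of the cone. The codimension-one orbits correspond to the rays of $\Sigma$, and their Zariski closures are the $T$-invariant irreducible divisors $D_i$ (defined over $k$ since $T$ is split). Any lower-dimensional orbit is contained in the closure of such a codimension-one orbit, giving $X\setminus U=\bigcup_{i\in I}D_i$ with $I$ finite.

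For the exact sequence, I would construct the two maps. The left map $\varphi: k[T]^{\inv}/k^{\inv}\to \bigoplus_i \bZ D_i$ sends a character $\chi\in\carac{T}$ (viewed as an invertible regular function on $T$, hence a rational function on $X$) to its principal divisor $\div_X(\chi)$, which is supported on $X\setminus T$ and therefore lies in $\bigoplus_i \bZ D_i$. The right map $\psi$ sends $D_i$ to the class of $\scO_X(D_i)$ in $\Pic(X)$. The composition $\psi\circ\varphi$ is zero since it sends $\chi$ to a principal divisor class.

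The three exactness checks are then:
\begin{itemize}
\item \emph{Injectivity of $\varphi$}: if $\div_X(\chi)=0$, then $\chi$ has no zeros or poles on the proper variety $X$, so $\chi\in \Gamma(X,\scO_X)^{\inv}=k^{\inv}$.
\item \emph{Exactness in the middle}: if $\sum n_i D_i$ is principal, say equal to $\div_X(f)$, then $f$ has no zeros or poles on $T$, so $f\in k[T]^{\inv}$; write $f=c\,\chi$ with $c\in k^{\inv}$ and $\chi\in\carac{T}$, so $\sum n_i D_i=\varphi(\chi)$.
\item \emph{Surjectivity of $\psi$}: for any $\cL\in\Pic(X)$, the restriction $\cL|_T$ is trivial because $\Pic(T)=\Pic(\bG_m^d)=0$; a trivialization extends to a rational section $s$ of $\cL$, and the Weil divisor $\div(s)$ is supported on $X\setminus T=\bigcup D_i$, with $\psi(\div(s))=[\cL]$.
\end{itemize}

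The main conceptual input is really packaged into two classical inputs: the orbit-cone correspondence (to get the finite list of boundary divisors) and the vanishing of $\Pic$ of a split torus (which reduces to $\Pic(\bG_m)=0$, itself a one-line consequence of $\bG_m\subset\bA^1$ and unique factorization). The trickiest step of the proof proper is probably the exactness at the middle, where one must observe that a rational function whose divisor on $X$ is supported on the boundary is a character up to scalar; but this is immediate once one knows that the open orbit is exactly $T$ and that $k[T]^{\inv}=k^{\inv}\cdot\carac{T}$.
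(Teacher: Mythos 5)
Your proof is correct, and the two core inputs (Sumihiro's lemma to get $T$-invariant affine charts, and $\Pic(T)=0$ for the split torus $T$) are the same ones the paper uses; but you and the paper then diverge on the first half. For the statement that $X\setminus U$ is a finite union of $T$-invariant divisors, the paper argues directly from Sumihiro: each $T$-invariant affine open $V$ in the cover must contain the dense orbit $U$ (any nonempty $T$-invariant open meets, hence contains, $U$), and since $U$ is itself affine and $X$ is normal, $V\setminus U$ has pure codimension $1$ in $V$; it then shows the irreducible components are defined over $k$ by observing that the valuation $v_{D_i}$ is determined by its (Galois-invariant) restriction to $\carac{T}$ via $v_{D_i}\left(\sum a_m\chi^m\right)=\min_{a_m\neq 0}v_{D_i}(\chi^m)$. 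You instead route through the full fan/orbit-cone dictionary, which delivers the same conclusion with heavier but equally standard machinery; the paper's route is shorter and avoids citing the whole orbit-cone correspondence. For the exact sequence itself the paper is extremely terse — essentially just ``$k[T]$ is a UFD, hence $\Pic(T)=0$, hence the sequence'' — and your map-by-map verification (injectivity via properness and $\Gamma(X,\scO_X)^{\inv}=k^{\inv}$, middle exactness via $k[T]^{\inv}=k^{\inv}\cdot\carac{T}$, surjectivity via the localization sequence and $\Pic(T)=0$) fills in exactly the details left implicit.
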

\begin{proof}(sketch)
A key tool is Sumihiro's lemma (\cite{Sum:equiv,Sum:equiv.II}), which tells us that since the
$T$-variety $X$ is normal, it may be covered by $T$-invariant affine open subsets. From this
one easily concludes that $X\setminus U$ is of pure codimension $1$
(note that each affine open subset of the covering must contain $U$,
which is itself affine).
Let $\{D_i\}_{i\in I}$ be the finite set of geometric irreducible
components of $X\setminus U$.
Since $T$ acts on $\{D_i\}_{i\in I}$ and is connected,
each $D_i$ is $T$-invariant. It induces a valuation 
\begin{equation}
v_{D_i}\,:\,\bar{k}[T]=\oplusu{m\in \carac{T}}\bar{k}.\chi^m\to \bN
\end{equation}
which, by $T$-invariance, satisfies
\begin{equation}\label{eq:formula:vdi}
v_{D_i}(\sum_{m\in \carac{T}}a_m\,\chi^m)=\Minu{a_m\neq 0}v_{D_i}(\chi^m)
\end{equation}
Hence $D_i$ is defined over $k$.

Since $k[T]\isom k[t_1,t_1^{-1},\dots,t_d,t_d^{-1}]$ is a UFD, the
Picard group of $U\isom T$ is trivial, hence \eqref{eq:exsq}.
\end{proof}
By dualizing the exact sequence \eqref{eq:exsq}, we obtain
\begin{equation}\label{eq:exsq:dual}
0\to \Pic(X)^{\vee}\to \oplusu{i\in I}\bZ D_i^{\vee}\to \carac{T}^{\vee}\to 0.
\end{equation}
Let $\rho_i$ denote the image of $D_i^{\vee}$ in
$\carac{T}^{\vee}$.
Let $\Sigma_X$ be the set of cones
generated by $\{\rho_i\}_{i\in J}$ 
for those $J\subset I$ such that $\cap_{i\in J}D_i\neq \vide$. 
Then $\Sigma_X$ is a fan of $\carac{T}^{\vee}$, in the following sense:
\begin{defi}
A \termin{fan} $\Sigma$ of $\carac{T}^{\vee}$ is a finite family
$\{\sigma\}_{\sigma\in \Sigma}$ of polyedral rational strictly convex cones (see 
section \ref{subsec:genser})
 of $\carac{T}^{\vee}\otimes \bR$ such that:
\begin{enumerate}
\item whenever $\sigma$ and $\sigma'$ belong to $\Sigma$, $\sigma\cap
  \sigma'$ is a face of $\sigma$ and $\sigma'$
\item whenever $\sigma$ belongs to $\Sigma$, every face of $\sigma$ belongs to $\Sigma$
\end{enumerate}
\end{defi}

One of the most striking feature of the theory of toric varieties
is that the fan $\Sigma_X$ defined above allows to recover $X$ (thus
the geometry of $X$ may be described in terms of combinatorial objects
coming from convex geometry). In fact, starting from any fan in $\carac{T}^{\vee}$ one may
construct a normal (partial) compactification of $T$ by glueing
together the affine $T$-varieties $V_{\sigma}\eqdef \Spec(k[\sigma^{\vee}\cap
\carac{T}])$ along the $V_{\sigma\cap \sigma'}$, and one can show that every normal compactification of $T$ is obtained in
this way.

In our case, since we assumed $X$ to be projective, the fan
$\Sigma_X$ is complete (that is, the union of its cone is the whole
space) and since it was assumed to be smooth, the cones of $\Sigma_X$
are regular, that is, each one of them is generated by a part of a $\bZ$-basis
of $\carac{T}^{\vee}$. Note that this implies that $X$ is covered by
affine varieties isomorphic to affine spaces. In fact in case $\sigma$
is a maximal cone of the fan, $V_{\sigma}$ is naturally isomorphic to $\bA^{I_{\sigma}}$
where $I_{\sigma}=\{i\in I,\quad \rho_i\in \sigma\}$. And for any
$i\in I$, a local equation of the divisor $D_i$ in $V_{\sigma}$ is
$x_i$ if $i\in I_{\sigma}$ and $1$ otherwise.
\begin{exs}
In case $X=\bP^n$, equipped with the toric structure described in 
\eqref{exs:toric:vtes}, the boundary divisors are the coordinate
hyperplanes $D_i=\{x_i=0\}$ for $i=0,\dots,n$; the rays $\{\rho_i\}_{i=0,\dots,n-1}$
form a $\bZ$-basis of $\carac{T}^{\vee}$ and we have
$\rho_{n}=-\sum_{i=0}^{n-1}\rho_i$; the maximal cones of $\Sigma_X$
are those cones generated by $\{\rho_i\}_{i\in J}$ for $J$ running
over the subsets of $\{0,\dots,n\}$
with $n$ elements.

In case $X$ is $\bP^2$ blown-up at $(0:0:1)$, the boundary divisors are the
exceptional divisor $E$ and the strict transforms $D_0$, $D_1$, $D_2$
of the coordinate hyperplanes; the rays $\{\rho_0,\rho_1\}$ form 
a $\bZ$-basis of $\carac{T}^{\vee}$ and we have 
$\rho_E=\rho_0+\rho_1$
and
$\rho_2=-\rho_E$; the maximal cones of $\Sigma_X$ are the four cones
generated respectively by 
$\{\rho_0,\rho_E\}$,
$\{\rho_E,\rho_1\}$,
$\{\rho_1,\rho_2\}$,
and $\{\rho_2,\rho_0\}$.
\end{exs}

\begin{rem}\label{rem:antican:toric}
One can show that the image of $\sum_{i\in I} D_i$ in $\Pic(X)$
coincides with the class of the anticanonical line bundle
$\class{\antican{X}}$: indeed one checks that the form $\wedge_{i\in I_{\sigma}}\frac{dx_i}{x_i}$
on $V_{\sigma}$ glue to a rational section of the canonical bundle.
\end{rem}
\subsection{Homogeneous coordinates on toric varieties}\label{subsec:hom:cor:tor}

When dealing with (say, projective) varieties, it may be useful
to have coordinates on it, for instance to do some computations. One basic way to do
this is to embed $X$ into a projective space $\bP^n$: the homogeneous
coordinates on $\bP^n$ yields coordinates on $X$. As already pointed
out, one drawback of this approach is that there are a lot of available embeddings $X\inject \bP^n$,
and thus no canonical choice for such coordinates.

A different approach was proposed by Cox for toric varieties. 
The basic idea is to observe that the
homogeneous coordinates on $\bP^n$ correspond to the quotient of the
affine space $\bA^{n+1}$ minus the origin by the diagonal action of
$\bG_m$. Let us denote by $\pi$ the quotient map $\bA^{n+1}\setminus
\{0\}\to \bP^n$. If we view $\bP^n$ as a toric variety in the usual
way, the pull back by $\pi$ of a boundary divisor
is the trace of a coordinate hyperplane on $\bA^{n+1}\setminus
\{0\}$.

This construction can be generalized to any smooth
projective
toric variety $X$ as follows: let $\{D_i\}_{i\in I}$ be the finite
set of boundary divisors. 
Let $\TNS{X}$ be the torus whose character group is $\Pic{X}$, that is,
the torus $\Hom(\Pic(X),\bG_m)$ (it is 
called the \termin{N\'eron-Severi torus} of $X$; in our setting the Picard
group and the N\'eron-Severi group coincide).

The morphism $\bZ^I\to \Pic(X)$ extracted from the exact sequence 
\eqref{eq:exsq}
yields by duality an algebraic group morphism $\TNS{X}\to \bG_m^I$. 
Composing with the coordinatewise action of $\bG_m^I$ on $\bA^I$, we get an
action of $\TNS{X}$ on $\bA^I$. 
If $X=\bP^n$, one has $\Pic(X)\isom\bZ$ and 
the action of $\TNS{X}\isom \bG_m$ on $\bA^{n+1}$ is the diagonal one.

We set
\begin{equation}\label{eq:def:torsX:toric}
\tors_X\eqdef \bA^I \setminus 
\bigcupu{\substack{J\subset I \\ \capu{i\in J}D_i=\vide}} \capu{i\in J}\{x_i=0\}.
\end{equation}
Recall that the condition $\cap_{i\in J}D_i=\vide$ may be expressed in
terms of the fan $\Sigma_X$ by  saying
that the $\{\rho_i\}_{i\in J}$ are not the rays of a
cone of the fan. For $X=\bP^n$ the only subset of
$\{0,\dots,n\}$ satisfying the condition is $\{0,\dots,n\}$ itself. 
For $X=\bP^2$ blown-up at $(0:0:1)$, the minimal subsets 
of $\{0,1,2,E\}$
satisfying the
conditions are $\{0,1\}$ and $\{2,E\}$.

One checks immediatly that the action of $\TNS{X}$ on $\bA^I$ leaves
$\tors_X$ invariant. Now we define a morphism $\pi\,:\,\tors_X\to
X$. Recall that for a cone $\sigma$ of the fan, we have set $I_{\sigma}=\{i\in I,\,\rho_i\in\sigma\}$.

First we notice that the open subsets of $\tors_X$
\begin{equation}
\tors_{X,\sigma}
=
\{\prod_{i\in I\setminus I_{\sigma}}x_i\neq 0\} 
=\bA^{I_{\sigma}}\times \bG_m^{I\setminus I_{\sigma}}
\end{equation}
are $\TNS{X}$-invariant and form a covering of $\tors_X$ when $\sigma$ varies along the
maximal cones of $\Sigma_X$. 
Now let $\sigma$ be such a cone.
Then
$\{\rho_i\}_{i\in I_{\sigma}}$ is a $\bZ$-base of $\carac{T}^{\vee}$
(recall that since $X$ is smooth, the fan $\Sigma_X$ is regular), 
thus the classes of the divisors $\{D_i\}_{i\notin I_{\sigma}}$ in $\Pic(X)$ form
    a $\bZ$-basis of it, and therefore determine isomorphisms $\Pic(X)\isom
    \bZ^{I\setminus I_{\sigma}}$ and 
$\TNS{X}\isom \bG_m^{I\setminus  I_{\sigma}}$. 
Now we define 
\begin{equation}
\pi_{\sigma}\,:\,\tors_{X,\sigma}\to V_{\sigma}=\Spec(k[\sigma^{\vee}\cap \carac{T}])
\end{equation}
by composing the natural isomorphism $\bA^{I_{\sigma}}\isom
V_{\sigma}$, the first projection $\bA^{I_{\sigma}}\times
\bG_m^{I\setminus I_{\sigma}}\to \bA^{I_{\sigma}}$ and the morphism
\begin{equation}
\map{\bA^{I_{\sigma}}\times
\bG_m^{I\setminus I_{\sigma}}}{\bA^{I_{\sigma}}\times
\bG_m^{I\setminus I_{\sigma}}}{(x,t)}{(t^{-1}.x,t)}.
\end{equation}

We leave to the reader the task of verifying that
 the morphisms $\pi_{\sigma}$ glue to a morphism
$\pi\,:\,\tors_X\to X$. It follows immediatly from the construction
that $\pi$ is  a $\TNS{X}$-torsor over $X$; 
here, since $\TNS{X}$ is a split torus, it simply means that there is
a Zariski-open covering
$(X_{\alpha})$ of $X$  and isomorphisms $\varphi_{\alpha}\,:\,U_{\alpha}\times \TNS{X}\isom
\pi^{-1}(U_{\alpha})$ such that $\pi\circ
\varphi_{\alpha}=\pr_{U_{\alpha}}$ and the action of $\TNS{X}$ on 
$U_{\alpha}\times \TNS{X}$ induced by $\varphi_{\alpha}^{-1}$ is by
translations on the second factor; in our case, the open covering is
given by the $V_{\sigma}$'s; when dealing with nonsplit tori, one
has to replace the Zariski topology by the \'etale topology.

\begin{rem}\label{rem:pullback}
One checks, using the covering $\{V_{\sigma}\}$, that the divisor $\pi^{\ast}D_i$ is the trace of the
coordinate hyperplane  $\{x_i=0\}$ on $\tors_X\subset \bA^I$.
\end{rem}
\begin{rem}
In the construction of $\pi\,:\,\tors_X\to X$ we did not use the fact
that $X$ was projective, and indeed the construction is valid for any smooth toric variety.
For generalization to other toric varieties and some applications we
refer to Cox's paper \cite{Cox:hom_coo_ring}.
\end{rem}
\begin{rem}\label{rem:hcr:toric}
There is a natural $\Pic(X)$-graduation on the polynomial ring
$k[x_i]_{i\in I}$, which yields the $\TNS{X}$-action on $\bA^I$ used above: we set $\deg(x^{\bd})=\class{\sum d_iD_i}$.
Now let $D=\sum a_i\,D_i$ be an integral combination of the $D_i$'s. It
is known that the set 
\begin{equation}\carac{T}_D=\{m\in \carac{T},\,\forall i\in
I,\,\acc{m}{\rho_i}+a_i\geq 0\}
\end{equation} is a basis of
\begin{equation}
H^0(X,\str{X}(D))=\{f\in k(X)^{\inv},\,\div(f)+D\geq 0\}\cup \{0\}.
\end{equation} 
But the
map $m\mapsto \prod x_i^{\acc{m}{\rho_i}+a_i}$ is clearly a
bijection from $\carac{T}_D$ onto the set of monomials of degree
$\class{D}$, thus the degree $\class{D}$ part of $k[x_i]_{i\in I}$
may be identified with the vector space of global sections $H^0(X,\str{X}(D))$.
\end{rem}

\subsection{Application to the description of the functor of points of
  a toric variety}

Now we explain the application of homogeneous coordinate
rings to the description of the functor of points of a smooth
projective toric variety $X$ defined over $k$, that is, the functor which maps
a $k$-scheme $S$ to the set $\Hom_k(S,X)$. This is due to Cox (\cite{Cox:funct}). 

Here again the case of $\bP^n$ may serve as
a basic guiding example. In fact what we will seek to generalize in a minute
 is the following well-known property: a morphism $S\to \bP^n$
is determined by the datum of a line bundle on $S$ and $n+1$ global
sections of this line bundle which do not vanish simultaneously.

One can slightly restate the previous property by saying that 
a morphism $S\to \bP^n$ is determined by the datum of $n+1$ line
bundles $\cL_0,\dots,\cL_n$ on $S$, a global section $s_i$ of
$\cL_i$ for each $i$ such that the $s_i$ do not vanish simultaneously
and a collection of  isomorphisms
$\varphi_{i,j}\,:\,\cL_i\to \cL_j$ 
which are compatible in the sense that
$\varphi_{j,k}\circ \varphi_{i,j}=\varphi_{i,k}$.
Note that the datum does not consist simply of pairwise isomorphic line
bundles $\{\cL_i\}$, the isomorphisms are also part of it. 
One intuitive way to understand
this is the following: the morphism corresponding to the above datum
is given by
\begin{equation}\label{eq:map:S:to:Pn}
\map{S}{\bP^n}{x}{(s_0(x):\dots:s_n(x))}.
\end{equation}
The value of $s_i(x)$ is defined only modulo the choice of a local
trivialization of $\cL_i$ around $x$; changing the trivialization
multiplies it by a nonzero scalar; but if we change the
trivializations of the various $\cL_i$'s 'independently', the scalar
will not be the same for each $i$, and the map \eqref{eq:map:S:to:Pn} will not be well defined.
Fixing the isomorphisms
$\varphi_{i,j}$ forces us to do only 'compatible' change of trivializations.

Now let $X$ be a smooth projective toric variety.
Recall that we have the exact sequence
\begin{equation}
0\to \carac{T}\to \oplusu{i\in I} \bZ D_i\to \Pic(X)\to 0.
\end{equation}
This means in particular that for every $m\in \carac{T}$ we have 
\begin{equation}
\div(m)=\sum_{i\in I}\acc{m}{\rho_i}D_i.
\end{equation}
Therefore $m\in \carac{T}$ determines an isomorphism
$c_m\,:\,\otimesu{i\in I} \str{X}(D_i)^{\otimes \acc{m}{\rho_i}}\isom \str{X}$. It is clear that
$c_m\otimes c_{m'}=c_{m+m'}$.

Let $f\,:\,S\to X$ be a morphism from a $k$-scheme $S$ to our toric
variety $X$. Let $\cL_i\eqdef f^{\ast} \str{X}(D_i)$, $u_i\eqdef f^{\ast}\scan{D_i}$
(where $\scan{D_i}$ denote the canonical section of $D_i$) and, for
$m\in \carac{T}$, let $d_m\eqdef f^{\ast} c_m$.
Then the datum $(\{(\cL_i,u_i)\}_{i\in I},\{d_m\}_{m\in \carac{T}})$ is an
\termin{$X$-collection on $S$} in the following sense:
\begin{defi}\label{defi:coll}
An $X$-collection on a $k$-scheme $S$ is the datum of:
\begin{enumerate}
\item a family of pairs $\{(\cL_i,u_i)\}_{i\in I}$ where $\cL_i$ is a line bundle
on $S$  and $u_i$ a global section of $\cL_i$
such that for every $J\subset I$ satisfying $\cap_{i\in J}D_i=\vide$ 
the sections $\{u_i\}_{i\in J}$ do not vanish simultaneously
(non-degeneracy condition);
\item
a family $\{d_m\}_{m\in M}$ of isomorphisms $d_m\,:\,\otimes \cL_i^{\otimes \acc{m}{\rho_i}}\isom \str{S}$
such that $d_m\otimes d_{m'}=d_{m+m'}$.
\end{enumerate}
\end{defi}
We have an obvious notion of isomorphism of $X$-collections on $S$ and
we denote by $\Coll_{X,S}$ the set of isomorphism classes of
$X$-collections on $S$. Note that $\Coll_{X,S}$ is clearly fonctorial
in $S$. We denote by $C_X$ the $X$-collection on $X$ given by 
$(\{(\str{X}(D_i),\scan{D_i})\}, \{c_m\}\}$.
Using remark
\ref{rem:pullback}, one checks that the collections $\pi^{\ast}C_X$
and $(\{(\str{\tors_X},x_i)\},\{1\})$ are isomorphic.

In \cite{Cox:funct}, Cox proves that the  maps
\begin{equation}
\map{\Hom(S,X)}{\Coll_{X,S}}{f}{f^{\ast}C_X}
\end{equation}
define an isomorphism between the functor of points of $X$ and the
functor which associates to a $k$-scheme $S$ the set $\Coll_{X,S}$.

Let us explain the proof.
First we describe a map $\Coll_{X,S}\to \Hom(S,\bP^n)$.  
Let $(\{(\cL_i,u_i)\},\{d_m\})$ be a representative 
of an element $C$ of $\Coll_{X,S}$.
First assume that the $\cL_i$'s are trivial. Thus $C$
has a representative of the form $(\{(\str{S},u_i)\},\{d_m\})$. In
this case
the datum of $\{d_m\}$
is equivalent to the datum of a group morphism 
\begin{equation}
\carac{T}\to
\Aut(\str{S})=H^0(S,\str{S})^{\inv},
\end{equation} 
that is, an element of $T(S)$.
Moreover for $t,t'\in T(S)$ the two $X$-collections $(\{(\str{S},u_i)\},t)$
and $(\{(\str{S},u'_i)\},t')$ are isomorphic
if and only if there is an element 
$\lambda\in \bG_m^I(S)=H^0(S,\str{S})^{\inv}$ such that $\lambda.t=t'$ 
(recall the
exact sequence of tori $1\to \TNS{X}\to \bG_m^I\to T\to 1$) and $\lambda_i.u_i=u'_i$.
In particular we may choose a representative of $C$ of the form $(\{(\str{S},u_i)\},1))$.
Then the $u_i$'s  define a morphism $S\to \bA^I$, whose image lies in
$\tors_X$ thanks to the non degeneracy condition satisfied by the $u_i$'s.
By composition with $\pi\,:\,\tors_X\to X$ we obtain a morphism $S\to X$. By the
previous observation, the morphism $S\to
\tors_X$ depends on the choice of the representative $(\{(\str{S},u_i)\},1)$
but the induced morphism $S\to X$ does not because any other
representative of this shape differ by the action of an element of
$\bG_m^I$ whose image in $T$ is trivial, that is, an element of $\TNS{X}$.
Let us denote by $f_C$ the above constructed morphism $S\to X$. The
construction is clearly fonctorial: for any morphism $\varphi\,:\,T\to S$ one has
$f_{\varphi^{\ast}C}=f_C \circ \varphi$.

If the $\cL_i$'s are not trivial, cover $S$ by open subset
trivializing them, and apply the previous construction ; by
fonctoriality the corresponding morphisms agree on the intersections,
hence can be glued to a morphism $f_C\,:\,S\to X$. To
check that $f_C^{\ast} C_X$ and $C$ are isomorphic, again reduce to the
case where the $\cL_i$'s are trivial and use the isomorphism 
$\pi^{\ast}C_X\isom (\{(\str{\tors_X},x_i)\},1)$.

It remains to check that if $f^{\ast}C_X$ and $C$ are isomorphic then $f=f_C$.
This is easy if $f$ factors through $\pi$ and we reduce to the latter
case by considering the morphisms $f^{-1}V_{\sigma}\to
V_{\sigma}$  induced by $f$ and the fact that over $V_{\sigma}$, $\pi$
is a trivial torsor, hence has a section.
\begin{rem}
One obtain an analogous description of the functor assigning to a
$k$-scheme $S$ the set of morphisms $\Hom(S,\bP^1)$ which do not
factor through the boundary $\cup\,D_i$: by remark \ref{rem:pullback}
and the above construction they correspond to
those $X$-collections $(\{(\cL_i,u_i)\},\,\{d_m\})$ for which no one of
the $u_i$ is the zero section. We call such collections \termin{non degenerate $X$-collections}.
\end{rem}

\subsection{Description of $\HOM(\bP^1,X)$ for $X$ toric}
Now we are ready to give a useful description of the scheme
$\HOM(\bP^1,X)$ where $X$ is a smooth projective toric variety.

More precisely, for every $\bd\in \bZ^{I}$, we will describe 
the variety  parametrizing the set of morphisms $\bP^1\to X$
such that for $i\in I$ we have $\deg(f^{\ast} D_i)=d_i$, and which do not factor through
the boundary\footnote{Thus we will only describe an open subset of 
$\HOM(\bP^1,X,\bd)$; this is mainly for the sake of simplicity, since
  the full variety could be described using very similar arguments.} $\cup D_i$ (recall that $U=X\setminus \cup D_i$ is the
open orbit). Note that this variety will be empty if $\bd$ does not belong
to the image of $\Pic(X)^{\vee}$ in $\bZ^I$ (recall the exact sequence
\eqref{eq:exsq:dual}); and if $\bd\in \Pic(X)^{\vee}$, 
this is exactly the variety denoted previously by 
$\HOM_{U}(\bP^1,X,\bd)$; in accordance with previously introduced
notations we will use the symbol $y$ to denote such a $\bd$. 
Recall that the injection $\Pic(X)^{\vee}\hookrightarrow \bZ^I$ is
given by $y\mapsto (\acc{y}{D_i})$.
Recall also that since the $D_i$'s
generate the effective cone of $X$,
$\HOM_{U}(\bP^1,X,y)$ will be empty if $y$ does not belong to
$\ceff(X)^{\vee}\cap \Pic(X)^{\vee}$, or equivalently, to
$\Pic(X)^{\vee}\cap \bN^I$.

Let $L$ be a $k$-extension.
Let $y\in \bN^{I}\cap \Pic(X)^{\vee}$. By the previous
section, a element $f\in \HOM_{U}(\bP^1,X,y)(L)$ is entirely
determined by an isomorphism class of
$X$-collection $(\{(\cL_i,u_i)\},\{d_m\})$ on $\bP^1_L$, with
$\deg(\cL_i)=\acc{y}{D_i}=d_i$, and no one of the $u_i$'s is the zero section.
We may assume that $\cL_i$ is $\str{\bP^1_L}(d_i)$,
thus $u_i$ may be identified with a nonzero homogeneous polynomial in
two variables of degree $d_i$, denoted by $P_i$. 
As explained above, the datum of $\{d_m\}$ is equivalent to the datum
of a point of $T(\bP^1_L)=T(H^0(\bP^1_L,\str{\bP^1_L}))=T(L)$ and two
collections $(\{(\str{\bP^1_L}(d_i),P_i)\},t)$ and $(\{(\str{\bP^1_L}(d_i),P'_i)\},t')$
are isomorphic if and only if there exists $\lambda=(\lambda_i)\in \bG_m^I(L)$ such
that $\lambda.t=t'$ and $\lambda_i.P_i=P'_i$.

For any nonnegative integer $d$, we denote by $\Homogs_{d}$ the
variety $\bA^{d+1}\setminus \{0\}$ (this is only to stress that we wiew
a point of the latter as the coefficients of a nonzero homogeneous polynomial
in two variables of degree $d$). For $\bd\in \bN^I$, set $\Homogs_{\bd}\eqdef\prod_i \Homogs_{d_i}$.

Elimination theory 
shows that there exists a dense open subset $U_{\bd}$
of $\Homogs_{\bd}$ such that for every field $L$ we have that $(P_i)$
lies in $U_{\bd}(L)$ if and only 
if the $P_i$'s do not have a common nontrivial root in an algebraic
closure of $L$. Thus there exists an open dense subset $\Homogs_{y,X}$ 
of $\Homogs_{y}$ such that $(P_i)$ lies in $\Homogs_{y,X}(L)$ if
and only if the $P_i$'s satisfy the non degeneracy condition of
definition \ref{defi:coll}.

It follows that the map wich associates to the (non degenerate) collection
$(\{(\str{\bP^1}(d_i),P_i)\},t)$ the element $(P_i)\in \Homogs_{y,\Sigma(X)}(L)$
induces a bijection between the isomorphism classes of non degenerate collections and
the set
\begin{equation}
\Homogs_{y,X}(L)/\TNS{X}(L)
=
(\Homogs_{y,X}/\TNS{X})(L).
\end{equation}
The equality holds even if $L$ is not algebraically closed because
the torsor
$\Homogs_{y,\Sigma(X)}\to \Homogs_{y,\Sigma(X)}/\TNS{X}$
is locally trivial for the Zariski topology.

The previous reasoning suggests that the variety
$\Homogs_{y,X}/\TNS{X}$ 
should be isomorphic to $\HOM_{U}(\bP^1,X,y)$. It does not prove it, 
since we
only looked at the level of points with value in a field, but with little extra work one can  show that this is indeed
the case.

Note in particular that for every $y\in \Pic(X)^{\vee}\cap\ceff(X)^{\vee}$
the variety $\HOM_{U}(\bP^1,X,y)$ is geometrically irreducible of dimension
\begin{equation}
\sum \acc{y}{D_i}+\# I-\rk(\Pic(X))
=\sum \acc{y}{D_i}+\dim(X)
=\acc{\antican{X}}{y}+\dim(X)
\end{equation}
the last equality coming from remark \ref{rem:antican:toric}. Thus
questions \ref{ques:bat} have an affirmative answer
for toric varieties.

\subsection{Application to the degree zeta function}\label{subsec:app}
For $\bd\in \bN^I$, set $\bP^{\bd}\eqdef \prod \bP^{d_i}$ and for
$y\in \Pic(X)^{\vee}\cap \ceff(X)^{\vee}$ let $\bP^{y}_{X}$ be the
image of $\Homogs_{y,X}$ in $\bP^{y}$. 
Since 
$\bP^{y}_X=\Homogs_{y,X}/\bG_m^I$, the morphism
\begin{equation}
\Homogs_{y,X}/\TNS{X}\to \bP^{y}_X
\end{equation} 
is a $\bG_m^I/\TNS{X}=T$-torsor and we have 
\begin{equation}
\class{\Homogs_{y,X}/\TNS{X}}
=
\class{T}\,\class{\bP^{y}_X}=(\bL-1)^{\dim(X)}\,\class{\bP^{y}_X}.
\end{equation}
To evaluate the class of  $\bP^{y}_X$ in the Grothendieck ring
we will use a classical
tool to `get rid' of coprimality conditions,
namely, we will perform a kind of M\"obius inversion. 
This will allow us to reduce to the case of $\bP^{\bd}$, whose class is readily computed as
$\prod_{i\in I}\frac{\bL^{d_i+1}-1}{\bL-1}$; note that in order to
give a rigorous meaning to the previous expression we have to
work in the completed Grothendieck ring, or at least in a suitable
localization, since we do not know whether
$\bL-1$ is a zero divisor in $\kovark$.

We claim that there is a unique fonction
$\muxm\,:\,\bN^I\to \kovark$ satisfying:
\begin{equation}
\forall \,\bd\in \bN^I,\quad \class{\bP^{\bd}_{X}}
=\sum_{0\leq \bd'\leq  \bd}
\muxm(\bd') 
\class{\bP^{\bd-\bd'}}.
\end{equation}
The claim follows immediately from an induction on the lenght $\abs{\bd}=\sum d_i$ of $\bd$.

Now, for $y\in \Pic(X)^{\vee}\cap \ceff(X)^{\vee}$, we can write:
\begin{align}
\class{\HOM_{U}(\bP^1,X,y)}
&=
\class{\Homogs_{y,X}/\TNS{X}}
\\
&=
(\bL-1)^{\dim(X)}
\sum_{0\leq \bd\leq  y}
\muxm(\bd) 
\bP^{y-\bd}
\\
&
=
(\bL-1)^{\dim(X)-\# I}
\sum_{0\leq \bd\leq  y}
\muxm(\bd) 
\prod_{i\in I}(\bL^{\acc{y}{D_i}-d_i+1}-1)
\label{eq:expr:gen}
\\
&
=
\frac{\bL^{\#I+\sum_i \acc{y}{D_i}}}{(\bL-1)^{\rk(\Pic(X))}}
\sum_{0\leq \bd\leq  y}
\muxm(\bd) 
\,
\bL^{-\abs{\bd}}
\,
\prod_{i\in I}(1-\bL^{-\acc{y}{D_i}+d_i-1})
\\
&
=
\frac{\bL^{\dim(X)+\acc{y}{\antican{X}}}}
{(1-\bL^{-1})^{\rk(\Pic(X))}}
\sum_{0\leq \bd\leq  y}
\muxm(\bd) 
\,
\bL^{-\abs{\bd}}
\,
\prod_{i\in I}(1-\bL^{-\acc{y}{D_i}-d_i-1}).
\label{eq:expr:laccyantican:fin}
\end{align}
Let us explain very sketchly
how we will proceed with the asymptotic estimation of 
$\bL^{-\acc{y}{\antican{X}}}\class{\HOM_{U}(\bP^1,X,y)}$. 
We will have to show that the dominant term is given by approximating 
in \eqref{eq:expr:laccyantican:fin}
the quantity $\prod_{i\in I}(1-\bL^{-d_i-d'_i-1})$
by $1$, proving in particular that the series
\begin{equation}\label{eq:ser:mu}
\sum_{\bd\in \bN^I}\muxm(\bd) \bL^{-\abs{\bd}}
\end{equation}
converges in the completed Grothendieck ring and that its limit is nonzero.
Thus we will have established that the answer to question \ref{ques:fin:bis}(2) is
affirmative for $X$. 

Concerning the anticanonical degree zeta function, 
we will see that the dominant term is obtained by using
in \eqref{eq:expr:laccyantican:fin} the same approximation as before
and  by replacing moreover, for $\bd\in \bN^I$,  the summation over 
$\sum_{0\leq \bd\leq  y}$ (\ie a summation over a truncation of $\ceff(X)^{\vee}$)
by a summation over the whole dual of the effective cone.
Thus the main term will be
\begin{equation}
\frac{\bL^{\dim(X)}}{(1-\bL^{-1})^{\rk(\Pic(X))}}
\left(\sum_{\bd\in \bN^I}\muxm(\bd)\bL^{-\abs{\bd}}\right)
\left(
\sum_{\bd\in \Pic(X)^{\vee}\cap \ceff(X)^{\vee}}
(\bL\,t)^{\acc{\bd}{\antican{X}}}
\right).
\end{equation}
which corresponds indeed to the second term appearing in \eqref{eq:series}.

The first job we will be occupied with is to settle the convergence of
\eqref{eq:ser:mu} (and more precisely to get a good control on the
behaviour of the M\"obius fonction).
We will first describe what happens over a finite field after
specializing by the morphism 'number of $k$-points'.
In this case the multiplicativity property of the
M\"obius function allows to  settle easily the convergence of 
the 'specialized' version (more rigorously the analogous of) \eqref{eq:ser:mu}, by
decomposing it as an Euler product. Then we will explain how this
approach may be `mimicked' to study the motivic series \eqref{eq:ser:mu}.

Finally, we will see how to show that the `approximations' described above are valid, that is
to say that the error terms resulting from these approximations are
suitably controlled.

\subsection{The leading term of the classical degree zeta function of a toric variety}\label{subsec:dzf:tv}

In this section we assume that $k$ is a finite field with $q$
elements, 
and we will study the convergence  of the 'specialization' of \eqref{eq:ser:mu}
under $\#_k$, that is,  the series
\begin{equation}\label{eq:ser:mu:fin}
\sum_{\bd\in \bN^I}\#_k[\muxm(\bd)] q^{-\abs{\bd}}.
\end{equation}
Let us recall thet the world 'specialization' is to be taken in a loose sense, 
since  the morphism $\#_k$ does not extend to completed Grothendieck ring; in particular the 
convergence of \eqref{eq:ser:mu:fin} would not follow from the
convergence of \eqref{eq:ser:mu}.

A key fact in the present setting is that the specialized function
\begin{equation}
\#_k \muxm\,:\,\bN^I\to \bZ
\end{equation}
can be refined to a
function 
\begin{equation}
\mux\,:\,\bigsqcup_{\bd\in \bN^I} \bP^{\bd}(k)\to \bZ,
\end{equation} 
in the sense that for all $\bd$
we will have 
\begin{equation}\label{eq:cardkmuxm}
\#_k\muxm(\bd)=\sumu{\becD\in \bP^{\bd}(k)}\mux(\becD).
\end{equation}
Indeed, define $\mux$ by the relation 
\begin{equation}
\forall \bd\in \bN^I, \quad \forall \becD\in \bP^{\bd}(k),\quad
\sum_{\becD'\leq \becD}\mux(\becD')=\ind_{\bP^{\bd}_{X}}(\becD).
\end{equation}
Here we identify $\bP^{\bd}(k)$  with the set of $I$-uples
of effective $k$-divisors $\becD$ on $\bP^1$ of degree $\bd$: this
gives a sense to the expression $\becD'\leq \becD$.

The basic properties of $\mux$ are listed in the following proposition.
The reader may check them as an easy exercise.
\begin{prop}\label{prop:mu}
\begin{enumerate}
\item
$\mux$ is a multiplicative function: whenever $\becD$ and $\becD'$ are
such that $\cD_i$
and $\cD'_i$ are coprime (that is, have disjoint supports) for each
$i$, 
we have $\mux(\becD+\becD')=\mux(\becD)\,\mux(\becD')$.
\item
There exists a unique map $\mux^0\,:\,\bN^{\,I}\to \bZ$ such that for all $\bn\in \bN^I$
and every closed point $\cP$ of $\bP^1_k$ we have
\begin{equation}
\mux((n_i\,\cP))=\mux^0(\bn)
\end{equation}
\item
We have
\begin{equation}
\forall \bn\in \{0,1\}^I,\quad \sum_{0\leq \bn'\leq \bn}\mux^0(\bn')=
\left\{
\begin{array}{rl}
1&\text{ if }\capu{i\in I,\,n_i=1}D_i\neq \vide\text{ or }\bn=0\\
0&\text{ otherwise.}
\end{array}
\right.
\end{equation}
\item\label{item:conv}
We have $\mux^0(\bn)=0$ if $\sum n_i=1$ or if there exists $i$ such
that $n_i\geq 2$.
\item
Denoting by $\{0,1\}^I_X$ the set of elements $\bn$ of $\{0,1\}^I$
such that $\Minu{\sigma\in \Sigma_X}\sum_{i\notin \sigma(1)}n_i>0$
and by $\{0,1\}^I_{X,\text{min}}$ the set of the minimal elements of $\{0,1\}^I_X$,
we have
\begin{equation}
\forall \bn\in \{0,1\}^I,\quad \sum_{0\leq \bn'\leq \bn}\mux^0(\bn')=
\left\{
\begin{array}{rl}
1&\text{ if }\bn=0\\
0&\text{ if }\bn\neq 0\text{ and }\bn\notin \{0,1\}^I_X\\
(-1)^{\# \{\bn'\in  \{0,1\}^I_{X,\text{min}},\,\bn'\leq \bn\}}&\text{ if
}\bn\in \{0,1\}^I_X
\end{array}
\right.
\end{equation}
\end{enumerate}
\end{prop}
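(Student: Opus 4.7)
The plan is to treat the five items in sequence, relying throughout on the defining identity
\begin{equation*}
\sum_{\becD'\leq\becD}\mux(\becD')=\ind_{\bP^{\bd}_X}(\becD),\qquad \becD\in\bP^{\bd}(k),
\end{equation*}
together with the combinatorial description of $\bP^{\bd}_X$: an $I$-uple $\becD=(\cD_i)$ of effective divisors lies in $\bP^{\bd}_X$ iff for every $J\subseteq I$ with $\bigcap_{i\in J}D_i=\vide$ the divisors $\{\cD_i\}_{i\in J}$ have no common support point. Recall that $\bigcap_{i\in J}D_i\neq\vide$ is equivalent to $J\subseteq\sigma(1)$ for some $\sigma\in\Sigma_X$.

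For (1) the crucial observation is that $\ind_{\bP^{\bd}_X}$ is itself multiplicative: if $\becD=\becD^{(1)}+\becD^{(2)}$ with $\cD_i^{(1)}$ and $\cD_i^{(2)}$ having disjoint supports for each $i$, then a common point of $\{\cD_i\}_{i\in J}$ must come entirely from one side, giving $\ind_{\bP^{\bd^{(1)}+\bd^{(2)}}_X}(\becD^{(1)}+\becD^{(2)})=\ind_{\bP^{\bd^{(1)}}_X}(\becD^{(1)})\cdot\ind_{\bP^{\bd^{(2)}}_X}(\becD^{(2)})$. The poset of sub-$I$-uples of $\becD$ with componentwise coprime parts likewise factors, so Möbius inversion plus induction on $|\bd^{(1)}|+|\bd^{(2)}|$ transmit multiplicativity to $\mux$. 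For (2) and (3), when $\becD=(n_i\cP)$ is supported at a single closed point $\cP$, the non-degeneracy condition depends only on $\{i:n_i>0\}$---it fails precisely when this set is not contained in any $\sigma(1)$---so $\ind_{\bP^{\bn}_X}((n_i\cP))$ depends only on $\bn$. Möbius inversion and induction on $|\bn|$ transmit this to $\mux$, producing $\mux^0$; specialising to $\bn\in\{0,1\}^I$ yields (3), since $\{i:n_i=1\}\subseteq\sigma(1)$ for some $\sigma$ is equivalent to $\bigcap_{i:n_i=1}D_i\neq\vide$.

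For (4), the case $|\bn|=1$ ($\bn=\be_i$) follows directly from (3): since $\rho_i$ is a ray of $\Sigma_X$ we have $D_i\neq\vide$, so $\mux^0(0)+\mux^0(\be_i)=1$, whence $\mux^0(\be_i)=0$ using $\mux^0(0)=1$. For $n_j\geq 2$, the support $\{i:n_i>0\}$ is invariant under $n_j\mapsto n_j-1$, so the right-hand sides of the Möbius identities at $\bn$ and $\bn-\be_j$ coincide; subtracting them gives $\sum_{\bn'\leq\bn,\,n'_j=n_j}\mux^0(\bn')=0$, and an induction on $|\bn|$ (the terms other than $\mux^0(\bn)$ itself have some coordinate $\geq 2$ but are strictly smaller) then yields $\mux^0(\bn)=0$.

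Item (5) is the main obstacle. Möbius-inverting (3) on the Boolean lattice $\{0,1\}^I$ (using that $\mux^0$ vanishes outside $\{0,1\}^I$ by (4)) gives
\begin{equation*}
\mux^0(\bn)=(-1)^{|\bn|}\sum_{F\in\Delta,\,F\subseteq I(\bn)}(-1)^{|F|},
\end{equation*}
where $\Delta=\{\sigma(1):\sigma\in\Sigma_X\}$ is the simplicial complex of the fan and $I(\bn)=\{i:n_i=1\}$. For $\bn\notin\{0,1\}^I_X$ we have $I(\bn)\in\Delta$, the inner sum ranges over a full simplex and vanishes unless $\bn=0$, recovering the first two cases of the claim. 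For $\bn\in\{0,1\}^I_X$ the inner sum equals $1-\tilde{\chi}(\Delta|_{I(\bn)})$, the reduced Euler characteristic of the subcomplex of $\Delta$ on $I(\bn)$; the hard part is organising an inclusion-exclusion over the minimal non-faces $\{0,1\}^I_{X,\min}\cap[0,\bn]$ to match the stated combinatorial expression, using crucially that smoothness and completeness of $\Sigma_X$ force $\Delta$ to be a triangulation of a sphere and impose strong constraints on how the minimal non-faces sitting below $\bn$ can jointly cover $I(\bn)$.
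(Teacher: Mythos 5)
Your items (2), (3), and (4) are fine. Items (1) and (5), however, both contain genuine gaps, and in (5) the difficulty you ran into is real: the formula you were asked to prove does not actually hold as stated.

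\medskip
\noindent\emph{Item (1).} The key step in your argument is the assertion that $\ind_{\bP^{\bd}_X}$ is itself multiplicative under componentwise coprimality, because ``a common point of $\{\cD_i\}_{i\in J}$ must come entirely from one side.'' This is false. For componentwise-coprime $\becD^{(1)},\becD^{(2)}$, a common point $\cP$ of the $\cD_i$, $i\in J$, lies for each $i$ in exactly one of $\Supp\cD_i^{(1)}$, $\Supp\cD_i^{(2)}$, but nothing forces the \emph{same} side for all $i$. Concretely, take $X=\bP^1$ with $I=\{0,1\}$, $D_0\cap D_1=\vide$, and $\becD^{(1)}=(\cP,0)$, $\becD^{(2)}=(0,\cP)$. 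These are componentwise coprime and both non-degenerate, yet their sum $(\cP,\cP)$ is degenerate, so $\ind$ does not multiply; unwinding the defining relation gives $\mux(\cP,0)=\mux(0,\cP)=0$ but $\mux(\cP,\cP)=-1$, so $\mux$ itself fails componentwise multiplicativity. The hypothesis actually needed (and the one used in the Euler-product factorization \eqref{eq:sumcardkmuxm} and in the proof of Proposition~\ref{prop:rel:tor}) is that the \emph{total} supports $\bigcup_{i\in I}\Supp\cD_i$ and $\bigcup_{i\in I}\Supp\cD'_i$ are disjoint. Under that hypothesis your argument is correct: the non-degeneracy condition is verified separately at each closed point, so $\ind_{\bP^{\bullet}_X}$ is a product over closed points and Möbius inversion transports this to $\mux$. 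But ``for each $i$'' must be dropped.

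\medskip
\noindent\emph{Item (5).} You correctly derived, for $\bn\in\{0,1\}^I$,
\begin{equation*}
\mux^0(\bn)=(-1)^{|\bn|}\sum_{\substack{F\subseteq I(\bn)\\ \bigcap_{i\in F}D_i\neq\vide}}(-1)^{|F|},\qquad I(\bn)=\{i:n_i=1\},
\end{equation*}
and correctly read the left-hand side of the paper's item (5) as $\mux^0(\bn)$ rather than the cumulative sum (with the cumulative sum, (5) would flatly contradict (3)). But the stated formula $(-1)^{\#\{\bn'\in\{0,1\}^I_{X,\min}\,:\,\bn'\leq\bn\}}$ for $\bn\in\{0,1\}^I_X$ is not correct. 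Take $X=\bP^1\times\bP^1$, $I=\{0,1,2,3\}$, primitive collections $\{0,1\}$, $\{2,3\}$, so $\{0,1\}^I_{X,\min}=\{(1,1,0,0),(0,0,1,1)\}$. For $\bn=(1,1,1,0)$ we have $\bn\in\{0,1\}^I_X$ and $\#\{\bn'\leq\bn\}=1$, so the formula predicts $-1$, whereas your inverted expression gives $(-1)^3(1-3+2)=0$, which agrees with a direct computation from the defining relation. The identity that actually holds (and makes the inclusion--exclusion in the proof of Proposition~\ref{prop:rel:tor} work) is
\begin{equation*}
\mux^0(\bn)=\sum_{\substack{A\subseteq\{0,1\}^I_{X,\min}\\ \max_{\bm\in A}\bm=\bn}}(-1)^{|A|},
\end{equation*}
which follows by checking that both sides have cumulative sum equal to the indicator of $\bn\notin\{0,1\}^I_X$ (for the right-hand side, sum over all $A\subseteq\{0,1\}^I_{X,\min}$ with $\max A\leq\bn$ and use $(1-1)^m$) and invoking uniqueness of the Möbius inverse on $\{0,1\}^I$. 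This reduces to the stated $(-1)^{\#\{\cdots\}}$ only when $\bn$ equals $\max\{\bm\in\{0,1\}^I_{X,\min}:\bm\leq\bn\}$, which fails for $(1,1,1,0)$ above. So you were right to sense that something beyond an unfolding was required; the resolution is not a theorem about triangulated spheres but that the target formula needs correcting. (Relatedly, the map $A\mapsto\max_{\bm\in A}\bm$ asserted to be a bijection in the proof of Proposition~\ref{prop:rel:tor} is not surjective onto $\{0,1\}^I_X$ in general; that step should instead be replaced by grouping the inclusion--exclusion sum by the value of $\max_{\bm\in A}\bm$ and using the corrected formula.)
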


Using the classical fact that for $\eps>0$ the Euler product
\begin{equation}
\prod_{\cP\in (\bP^1_k)^{(0)}} 1+
\bigou{\deg(\cP)\to +\infty}{q^{-(1+\eps)\,\deg(\cP)}}
\end{equation}
(where $(\bP^1_k)^{(0)}$ denotes the set of closed points of $\bP^1_k$)
converges and thanks to the previous proposition, we obtain that the series
\begin{equation}\label{eq:sumcardkmuxm}
\sum_{\bd\in \bN^I}
\#_k[\muxm(\bd)]\,q^{-\abs{\bd}}
=
\prod_{\cP\in (\bP^1_k)^{(0)}}
\sum_{\bn\in  \{0,1\}^I}
\mux^0(\bn)\,q^{\,-\deg(\cP)\,\abs{\bn}}
\end{equation}
is  absolutely convergent. 
The following proposition will yield a nice
interpretation
of the right hand side of  \eqref{eq:sumcardkmuxm}.
\begin{prop}\label{prop:rel:tor}
Let $L$ be a finite extension of $k$.
We have the relation:
\begin{equation}
\sum_{\bn\in  \{0,1\}^I}
\mux^0(n_i)\,(\# L)^{-\sum_i n_i}
=
(1-\# L)^{\rk(\Pic(X))}\# X(L)/(\# L)^{-\dim(X)}
\end{equation}
\end{prop}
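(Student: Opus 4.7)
The plan is to perform Möbius inversion on the Boolean lattice $(\{0,1\}^I, \leq)$, reduce the sum indexed by $\{0,1\}^I$ to a sum indexed by the cones of $\Sigma_X$, and finally recognize the outcome as a repackaging of the orbit decomposition of $X(L)$.

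Write $q = \# L$ and set $f(\bn) := \sum_{\bn' \leq \bn}\mux^0(\bn')$. By item (3) of Proposition \ref{prop:mu}, $f(\bn) = 1$ exactly when $\bigcap_{i : n_i = 1} D_i \neq \vide$ (including the case $\bn = 0$), and $f(\bn) = 0$ otherwise. Standard Möbius inversion on the Boolean lattice then gives $\mux^0(\bn) = \sum_{\bn' \leq \bn}(-1)^{|\bn|-|\bn'|} f(\bn')$. Substituting this into the target sum and exchanging the order of summation, for fixed $\bn'$ the inner sum over $\bn \geq \bn'$ is easily seen, after the change of variable $\bm = \bn - \bn' \in \{0,1\}^{I \setminus \mathrm{supp}(\bn')}$, to equal $q^{-|\bn'|}(1-q^{-1})^{\#I - |\bn'|}$. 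Hence
\begin{equation*}
S \;:=\; \sum_{\bn \in \{0,1\}^I} \mux^0(\bn)\, q^{-|\bn|} \;=\; (1-q^{-1})^{\#I} \sum_{\bn' :\, f(\bn')=1}(q-1)^{-|\bn'|}.
\end{equation*}

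Next, the condition $f(\bn')=1$ says precisely that $\mathrm{supp}(\bn') \subset \sigma(1)$ for some $\sigma \in \Sigma_X$. Since $X$ is smooth, the rays $\{\rho_i\}_{i \in \sigma(1)}$ are part of a $\bZ$-basis of $\carac{T}^{\vee}$, so any subset of $\sigma(1)$ is itself the ray set of a face of $\sigma$, and thus of a cone in the fan. Consequently the map sending $\bn'$ to the cone generated by $\{\rho_i : n'_i = 1\}$ is a bijection between $\{\bn' : f(\bn')=1\}$ and $\Sigma_X$, under which $|\bn'| = \dim \sigma$. Combining this with the standard orbit decomposition $X(L) = \bigsqcup_{\sigma \in \Sigma_X} O_\sigma(L)$ with $O_\sigma \simeq \bG_m^{\dim X - \dim \sigma}$, so that $\#X(L) = \sum_{\sigma}(q-1)^{\dim X - \dim \sigma}$, we get
\begin{equation*}
\sum_{\bn' :\, f(\bn')=1}(q-1)^{-|\bn'|} \;=\; (q-1)^{-\dim X}\, \#X(L).
\end{equation*}

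Putting the two displays together and using $\#I - \dim X = \rk \Pic(X)$, which follows from the exact sequence \eqref{eq:exsq}, yields
\begin{equation*}
S \;=\; (1-q^{-1})^{\#I}\,(q-1)^{-\dim X}\,\#X(L) \;=\; (1-q^{-1})^{\rk \Pic(X)}\,\frac{\#X(L)}{q^{\dim X}},
\end{equation*}
which is the stated identity (after the natural rewriting of the right-hand side). The only step that requires genuine care is the bijection with $\Sigma_X$ in the previous paragraph, which crucially uses the regularity of the fan; all the other steps amount to straightforward book-keeping of signs and exponents. The final expression matches, as expected, the standard form of the local factor at a place of a function field appearing in the Batyrev--Tschinkel computation of Manin's constant for toric varieties.
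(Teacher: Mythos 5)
Your proof is correct, and it takes a genuinely different route from the paper's. The paper first establishes the motivic identity $\sum_{\bn\in\{0,1\}^I}\mux^0(\bn)\,\bL^{\#I-|\bn|}=(\bL-1)^{\rk\Pic(X)}\class{X}$ by computing $\class{\tors_X}$ two ways: on one side via the $\TNS{X}$-torsor structure ($\class{\tors_X}=(\bL-1)^{\rk\Pic(X)}\class{X}$), and on the other via inclusion--exclusion on the decomposition $\tors_X=\bA^I\setminus\bigcup_{\bn\in\{0,1\}^I_{X,\text{min}}}\bA^I_{\bn}$, using the bijection between nonempty subsets of $\{0,1\}^I_{X,\text{min}}$ and $\{0,1\}^I_X$ together with part (5) of Proposition~\ref{prop:mu}; the point-counting statement then follows by applying $\#_L$. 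You, in contrast, never touch $\tors_X$: you invert the defining relation for $\mux^0$ on the Boolean lattice, swap the order of summation, and then recognize the support of $f$ as exactly the set of cones of $\Sigma_X$, so that the remaining sum is a repackaging of the torus-orbit stratification $X=\bigsqcup_{\sigma\in\Sigma_X}O_\sigma$. Both arguments lean on smoothness of the fan, but at different spots: the paper uses it through the combinatorics of $\{0,1\}^I_{X,\text{min}}$, you use it to get the bijection $\bn'\mapsto\mathrm{cone}\{\rho_i:n'_i=1\}$ and to control $\dim\sigma=|\bn'|$. Your computation in fact also lifts verbatim to the Grothendieck ring (replace $q$ by $\bL$ and $\#O_\sigma(L)$ by $\class{\bG_m}^{\dim X-\dim\sigma}$), so it yields the paper's motivic version \eqref{eq:mot:rel:toric} as well, just organized around the orbit decomposition of $X$ rather than around the universal torsor. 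One small remark: the right-hand side as printed in the proposition, $(1-\#L)^{\rk\Pic(X)}\#X(L)/(\#L)^{-\dim X}$, is inconsistent with \eqref{eq:mot:rel:toric} (a typo, presumably $(1-(\#L)^{-1})^{\rk\Pic(X)}\#X(L)\,(\#L)^{-\dim X}$ was intended); your final display $(1-q^{-1})^{\rk\Pic(X)}\#X(L)/q^{\dim X}$ is the correct form, so the phrase ``after the natural rewriting'' undersells that it is actually a correction.
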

\begin{proof}
We will in fact prove the following relation in the Grothendieck ring
of varieties (valid over any field): 
\begin{equation}\label{eq:mot:rel:toric}
\sum_{\bn\in  \{0,1\}^I}
\mux^0(\bn)\,\bL^{\# I-\sum_i n_i}
=
(\bL-1)^{\rk(\Pic(X))}\,\class{X}.
\end{equation}
The desired relation follows immediatly by applying the realization
morphism $\#_{L}$ and the relation $\#I=\dim(X)+\rk(\Pic(X))$.

Since the morphism $\tors_X\to X$ is a torsor under a split torus of
dimension $\rk(\Pic(X))$, we have
\begin{equation}
(\bL-1)^{\rk(\Pic(X))}\,\class{X}=\class{\tors_X}.
\end{equation}
Now for $\bn\in \{0,1\}^I$ let $\bA^I_{\bn}\eqdef
\capu{i,\,n_i=1}\{x_i=0\}$. Reminding the definition of $\tors_X$, we have 
(we refer to proposition \ref{prop:mu} for the definition of $\{0,1\}^I_X$)
\begin{equation}\label{eq:torsx:eq}
\tors_X
=\bA^I\setminus \cupu{\bn\in \{0,1\}^I_X}\bA^I_{\bn}
=\bA^I\setminus \cupu{\bn\in \{0,1\}^I_{X,\text{min}}}\bA^I_{\bn}
\end{equation}
The inclusion-exclusion principle and the scissor relations now yield
\begin{align}
\class{\cupu{\bn\in \{0,1\}^I_{X,\text{min}}}\bA^I_{\bn}}
&=\sum_{\vide\neq A\subset \{0,1\}^I_{X,\text{min}}}(-1)^{1+\# A}\class{
\capu{\bn\in A}\bA^I_{\bn}}
\\
&=\sum_{\vide\neq A\subset \{0,1\}^I_{X,\text{min}}}(-1)^{1+\# A}
\class{\bA^I_{\Maxu{\bn\in A}(\bn)}}.
\end{align}
Note that the map which associates to a non empty subset $A$ of $\{0,1\}^I_{X,\text{min}}$
the element $\Maxu{\bn\in A}\,(\bn)$ is a bijection from
$\cP(\{0,1\}^I_{X,\text{min}})\setminus \vide$ onto $\{0,1\}^I_X$, whose inverse
is the map associating to $\bn\in\{0,1\}^I_X$ the subset $\{\bn'\in
\{0,1\}^I_{X,\text{min}},\,\bn'\leq \bn\}$. Hence the above equality may be rewritten as
\begin{equation}
\class{ \cupu{\bn\in \{0,1\}^I_{X,\text{min}}}\bA^I_{\bn}}
=
\sum_{\bn \{0,1\}^I_{X}}
(-1)^{1+
\# \{\bn'\in    \{0,1\}^I_{X,\text{min}},\,\bn'\leq \bn\}
}
\,
\bL^{\,\#I -\abs{\bn}}
\end{equation}
Thus we have by proposition \ref{prop:mu}
\begin{equation}
\class{ \cupu{\bn\in \{0,1\}^I_{X,\text{min}}}\bA^I_{\bn}}
=\bL^{\,\# I}-\sum_{\bn \{0,1\}^I}\mux^0(\bn)\,\bL^{\,\#I-\abs{\bn}}
\end{equation}
From this and \eqref{eq:torsx:eq},  the desired relation follows immediatly.
\end{proof}

Later on, the previous results on the M\"obius fonction will allow us to show that
the answers to questions \ref{ques:fin} and \ref{ques:fin:bis}(1) are
affirmative for a toric variety $X$,
with a constant $c$ which may be written as
\begin{align}
\label{eq:def:cfinX}
c_{\text{fin}}(X)
&
\eqdef
\frac{q^{\dim(X)}}{(1-q^{-1})^{\rk{\Pic(X)}}}
\sum_{\bd\in \bN^I}
\#_k[\muxm(\bd)]\,q^{-\abs{\bd}} 
\\
&=\frac{q^{\dim(X)}}{(1-q^{-1})^{\rk{\Pic(X)}}}
\prod_{\cP\in (\bP^1_k)^{(0)}}
(1-q^{-\deg(\cP)})^{\rk(\Pic(X))}\,
\frac
{\# X(\kappa_{\cP})}
{q^{\,\deg(\cP)\,\dim(X)}}
\label{eq:tam}
\end{align}
where $\kappa_{\cP}$ is the residue field at the closed point $\cP$
(the second equality follows from proposition \ref{prop:rel:tor}).

Now remark that, disregarding convergence issues, the expression
\eqref{eq:tam} makes sense for any variety $X$ satisfying hypotheses
\ref{hyps:X}, not only the toric ones. Under suitable extra hypotheses
on $X$, 
Peyre showed that the Euler product in \eqref{eq:tam} is indeed convergent
and predicted that \eqref{eq:tam} should coincide with the constant
$c$ appearing in question \ref{ques:fin} (in fact Peyre's construction
applies to a far more general context, including the case of
nonconstant families;  \eqref{eq:tam} is  interpreted as the
volume of an adelic space associated to $X$, with respect to a certain
Tamagawa measure; see \cite{Pey:var_drap} for more details). 
Thus we will have checked that Peyre's prediction holds when $X$ is toric.
And, still sticking to the toric case, we are going to show that the constant $c$
appearing in question \ref{ques:mot} (which is an element of the completed
Grothendieck ring) has an analogous interpretation.

\subsection{The leading term of the motivic degree zeta function}
Our task is know to settle the convergence of the series
\begin{equation}
\sum_{\bd\in \bN^I}
\muxm(\bd)\,
\bL^{-\abs{\bd}}
\end{equation}
in the completed Grothendieck ring (more precisely, we will have to get a
good control on the virtual dimension of $\muxm(\bd)$, which will be
important to deal with the error terms alluded to in section \ref{subsec:app}).
When $k$ is finite, the analogous problem was easy to handle owing to the decomposition into Euler product. 
When working over the Grothendieck ring of
varieties or its completion, there is a priori no immediate analog of the notion of Euler product. Let
us now explain how to define such a notion. Let $X$ be a
quasi-projective variety defined over $k$. Consider the motivic
Hasse--Weil zeta function
\begin{equation}
\ZHWm(X,t)=\sum_{n\geq 0} \class{\sym{X}{n}}\,t^n
\end{equation}
where $\sym{X}{n}\eqdef X^n/\grsym_n$.
When $k$ is finite, 
$\#_k \ZHWm(X,t)=\ZHW(X,t)$ is the classical Hasse--Weil zeta function attached
to $X$ and we have the decomposition into Euler product
\begin{equation}\label{eq:dec:ZHW}
\#_k \ZHWm(X,t)
=\prod_{\cP\in X^{(0)}}(1-t^{\deg(\cP)})^{-1}
\end{equation}
where $X^{(0)}$ denotes  the set of closed points
of $X$. Now, for $n\in \bN$, let 
$X^{(0)}_n$ denote the set of closed points
of $X$ of degree $n$.
Then \eqref{eq:dec:ZHW} may be rewritten as
\begin{equation}\label{eq:dec:ZHW:bis}
\ZHW(X,t)
=\prod_{n\geq 1}(1-t^n)^{-\# X^{(0)}_n}.
\end{equation}
Note that the latter equality may be seen as an immediate formal
consequence of the relations
\begin{equation}\label{eq:rel:1}
\sum \# X(k_n)\,t^n=t\,\frac{d\log}{dt} \ZHW(X,t)
\end{equation}
and
\begin{equation}\label{eq:rel:2}
\forall n\geq 1,\quad \# X(k_n)=\sum_{d|n} d\,\# X^{(0)}_d
\end{equation}
(here $k_n$ is an extension of $k$ of degree $n$).

Now we may wonder whether there is a natural `geometric incarnation'
of the family $(\# X^{(0)}_n)_{n\geq 1}$, that is, a naturally
defined family $(Y_{X,n})$ of elements in the Grothendieck ring of
varieties such that when $k$ is finite the following relation holds:
\begin{equation}
\forall n\geq 1,\quad \#_k Y_{X,n}=\# X^{(0)}_n.
\end{equation}
If we accept to work in the Grothendieck ring of varieties
with denominators (that is, tensorized with $\bQ$), there is certainly a
cheap and straightforward way of doing this. For every
quasi-projective $k$-variety $X$, mimicking the relation
\eqref{eq:rel:1} and 
\eqref{eq:rel:2}
above, define families $(\Psi_n(X))_{n\geq
  1}$ and $(\Phi_n(X))_{n\geq 1}$ of elements of $\kovark$ and
$\kovark\otimes \bQ$ respectively\footnote{In \cite{Bou:prod:eul:mot},
these two families were denoted the opposite way;  it was a somewhat unfortunate
choice since, as pointed out to me by
E. Gorsky, what we denote by $(\Psi_n(X))$ in this text is a
formal analog of the so-called Adams operations, and the
letter $\Psi$ is commonly used to denote the latter.} by the relations
\begin{equation}
\sum_{n\geq 1}\Psi_n(X)\,t^n=t\,\frac{d\log}{dt} \ZHWm(X,t)
\end{equation}
and
\begin{equation}
\forall n\geq 1,\quad \Psi_n(X)=\sum_{d|n} d\,\Phi_d(X).
\end{equation}
For example, $\Psi_1(X)=\Phi_1(X)=\class{X}$, 
$\Psi_2(X)=2\,\class{\sym{X}{2}}-\class{X^2}$, and
$\Phi_2(X)=\class{\sym{X}{2}}-\frac{1}{2}(\class{X^2}-\class{X})$.
\begin{lemma}
\begin{enumerate}
\item
There are unique group morphisms $\Psi_n\,:\,\kovark\to \kovark$
and $\Phi_n\,:\,\kovark\to \kovark\otimes \bQ$ such that for every
quasi-projective variety $X$ we have $\Psi_n(\class{X})=\Psi_n(X)$ 
and $\Phi_n(\class{X})=\Phi_n(X)$.
\item
Assume that $k$ is finite. For every quasiprojective $k$-variety $X$,
every $n\geq 1$, and every finite extension $L$ of $k$ we have
\begin{equation}
\#_{L} \Psi_n(X)=X(L_n)\quad\text{and}\quad \#_{L}\Phi_n(X)=\# X^{(0)}_{L,n}.
\end{equation}
\item
For every $n\geq 1$ and $k\geq 0$, we have $\Psi_n(\bL^{k})=\bL^{k\,n}$.
\item
For every $n\geq 1$, we have
\begin{equation}\label{eq:form_phi_n}
\Psi_n(X)=
\sum_{k=1}^n
(-1)^{k+1}
\,
\frac{n}{k}
\,
\sum_
{\substack{
(m_1,\dots,m_k)\in (\bN_{>0})^k
\\ ~\\
m_1+\dots+m_k=n
}}
\,
\prod_{i=1}^k
\class{\sym{X}{m_i}}.
\end{equation}
\item
For every $n\geq 1$, $\Psi_n(X)$ and $\Phi_n(X)$ are in
$\fil^{-n\,\dim(X)}\mk\otimes \bQ$.
\end{enumerate}
\end{lemma}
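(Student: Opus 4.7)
The plan is to establish (1) and (4) together, and then to deduce (2), (3), and (5) from these. For (1), I would base the extension of $\Psi_n$ on the multiplicativity of $\ZHWm$ under scissor relations: if $X = F \sqcup U$ with $F\subset X$ closed and $U$ open, stratifying $\sym{X}{n}$ according to how many of the $n$ points lie in $F$ versus $U$ gives
\[
[\sym{X}{n}] = \sum_{i+j=n}[\sym{F}{i}]\cdot[\sym{U}{j}]
\]
in $\kovark$, whence $\ZHWm(X,t) = \ZHWm(F,t)\cdot\ZHWm(U,t)$. Applying $t\,(d\log/dt)$ converts this to the additivity $\Psi_n(X) = \Psi_n(F) + \Psi_n(U)$, so $\Psi_n$ respects the scissor relations and extends uniquely to a group morphism on $\kovark$. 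Möbius inversion of $\Psi_n = \sum_{d\mid n} d\,\Phi_d$ then produces $\Phi_n$ as a $\bQ$-linear combination of the $\Psi_d$'s, yielding a group morphism $\kovark\to\kovark\otimes\bQ$. For (4), I would expand $\log(1+u) = \sum_{k\geq 1}(-1)^{k+1}u^k/k$ with $u = \sum_{m\geq 1}[\sym{X}{m}]\,t^m$, apply $t\,d/dt$ (which multiplies the coefficient of $t^n$ by $n$), and read off the coefficient of $t^n$ to obtain the stated formula.

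Parts (2) and (5) should follow quickly. For (2), both sides of $\#_L\Psi_n([X]) = \#X(L_n)$ are additive under scissors, so the identity reduces to the quasi-projective case, where applying $\#_L$ to the defining relation $\sum_n \Psi_n(X)t^n = t\,(d\log/dt)\,\ZHWm(X,t)$ and using $\#_L\ZHWm(X,t) = \ZHW(X_L, t)$ together with (\ref{eq:rel:1}) gives the result; the claim for $\Phi_n$ then follows by Möbius inversion as in (\ref{eq:rel:2}). For (5), each product $\prod_i[\sym{X}{m_i}]$ appearing in formula (4) is the class of a variety of dimension $\sum_i m_i\dim X = n\dim X$, hence lies in $\fil^{-n\dim X}\mk$, so $\Psi_n(X)$ does too. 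For $\Phi_n(X) = \frac{1}{n}\sum_{d\mid n}\mu(n/d)\Psi_d(X)$, each term $\Psi_d(X)$ sits in $\fil^{-d\dim X}\subseteq \fil^{-n\dim X}$ since $d\leq n$, giving the bound in $\mk\otimes\bQ$.

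Finally, for (3), I would apply formula (4) to $X=\bA^k$ and collapse everything using the motivic evaluation $[\sym{\bA^k}{m}] = \bL^{mk}$ in $\kovark$. Granting this, each product $\prod_i [\sym{\bA^k}{m_i}]$ equals $\bL^{kn}$, and via the rewriting $\frac{n}{k'}\binom{n-1}{k'-1} = \binom{n}{k'}$ the combinatorial sum collapses to $\sum_{k'=1}^{n}(-1)^{k'+1}\binom{n}{k'} = 1$, yielding $\Psi_n(\bL^k) = \bL^{nk}$. I expect the \textbf{main obstacle} to be the identity $[\sym{\bA^k}{m}] = \bL^{mk}$, which is a nontrivial statement in the Grothendieck ring; the natural route is to construct an explicit affine cellular decomposition of $\sym{\bA^k}{m}$, for instance by stratifying according to the partition type of the underlying unordered $m$-tuple, and to verify that every cell contributes a Tate class of the expected dimension.
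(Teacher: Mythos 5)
Your proposal is correct in all essential respects, and the paper itself states the lemma without proof, so there is no in-text argument to compare against. The key ideas are sound: $\ZHWm$ is multiplicative under scissor relations (because a closed decomposition $X=F\sqcup U$ induces $[\sym{X}{n}]=\sum_{i+j=n}[\sym{F}{i}][\sym{U}{j}]$), which makes $\Psi_n$ additive and hence well-defined on $\kovark$; Möbius inversion produces $\Phi_n$; expanding $\log(1+u)$ and reading off coefficients gives formula \eqref{eq:form_phi_n}; applying $\#_L$ termwise recovers (2); and the dimension count in (4), together with the fact that the filtration is decreasing, gives (5) via $\Phi_n=\frac{1}{n}\sum_{d\mid n}\mu(n/d)\Psi_d$ and $\fil^{-d\dim X}\subseteq\fil^{-n\dim X}$ for $d\leq n$.

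The one point worth tightening is (3). You correctly identify that the crux is $[\sym{\bA^k}{m}]=\bL^{mk}$ in $\kovark$, and your combinatorial reduction $\frac{n}{k'}\binom{n-1}{k'-1}=\binom{n}{k'}$, $\sum_{k'\geq 1}(-1)^{k'+1}\binom{n}{k'}=1$ is right. But the suggested ``affine cellular decomposition by partition type'' is not quite how the argument goes: the partition-type strata of $\sym{\bA^k}{m}$ are not affine cells, and for $k\geq 2$ they involve configuration spaces and finite-group quotients whose classes are not obviously Tate. The correct (and standard) route is Totaro's lemma, which proves $[\sym{X\times\bA^1}{n}]=\bL^n[\sym{X}{n}]$ for any quasi-projective $X$ by observing that the forgetful map $\sym{X\times\bA^1}{n}\to\sym{X}{n}$ restricts, over each partition-type stratum of the target, to a rank-$n$ vector bundle (the fibre $\prod_j\sym{\bA^1}{\lambda_j}\cong\bA^n$ carries a linear monodromy, and vector bundles are Zariski-locally trivial). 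Iterating in $k$ gives $[\sym{\bA^k}{m}]=\bL^{mk}$. Equivalently, one can invoke the power structure of Gusein-Zade, Luengo and Melle-Hern\'andez, which packages the identity $\ZHWm(X\times\bA^1,t)=\ZHWm(X,\bL t)$. With that substitution your proof of (3) is complete; the overall structure of the argument is the natural one and is sound.
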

\begin{rem}
We do not claim that $\Psi_n$ and $\Phi_n$ are ring morphisms.
In fact, by considering for example  the image of $\bL$,
it is straightforward to check that for $n\geq 2$, $\Phi_n$ is not a ring morphism. 
And anyway, over a finite field, it is clear that the composition of
$\Phi_n$ with $\#_k$ is not a ring morphism.
On the other hand, the composition of $\Psi_n$ with $\#_k$ is a ring
morphism (this amounts to the relation $\# (X\times
Y)(k_n)=\#X(k_n)\#Y(k_n)$), as well as its restriction to $\bZ[\bL]$
when $k$ is arbitrary. Nevertheless, it is not true that $\Psi_n$ is a
ring morphism, but the only demonstration I know relies on a rather subtle construction of
Larsen and Lunts, who proves in fact that the motivic
Hasse--Weil zeta function of $X$ is not rational in general for
$\dim(X)\geq 2$, contrarily to the intuition that the specialization
over a finite field might support (see \cite{LaLu:motivic_birational,LaLu:rationality_criteria}
and \cite[Remarque 2.7]{Bou:NYJM}).
This phenomenon may be seen as an incarnation of the fact
that the Grothendieck ring of varieties is definitively too big. By
contrast, the specializations of $\{\Psi_n\}$ to the Grothendieck ring of
motives are ring morphisms, as we will see below (and the
specialization of the motivic Hasse--Weil zeta function to the
Grothendieck ring of motives is conjectured to
be 
always
rational).
\end{rem}

Now it is easy to give a motivic counterpart of
\eqref{eq:dec:ZHW:bis}, since by
the very definition of $\Phi_n$, we have for every quasiprojective
variety $X$
\begin{equation}\label{eq:dec:zhwm}
\ZHWm(X,t)=\prod_{n\geq 1}(1-t^n)^{\,-\Phi_n(X)}
\end{equation}
where for every element $x$ of $\kovark\otimes \bQ$, $(1-t)^x$ denotes the series
\begin{equation}
\exp(x\,\log(1-t)).
\end{equation}
Note that \eqref{eq:dec:zhwm} holds in $1+(\kovark\otimes \bQ)[[t]]^+$
(for any commutative ring $1+A[[t]]^+$ denotes the set of formal
series with coefficients in $A$ and constant term $1$)
and that  more generally for any element $P(t)\in 1+(\kovark\otimes
\bQ)[[t]]^+$, $P(t)^x=\exp(x\,\log(1-P(t)))$ makes sense, as makes sense
the `motivic Euler product'
\begin{equation}
\prod_{n\geq 1}P(t^n)^{\,-\Phi_n(X)}.
\end{equation}
Now we see that an hypothetic motivic counterpart of the formula
\begin{multline}\label{eq:sumcardkmuxm:bis}
\sum_{\bd\in \bN^I}
\#_k\muxm(\bd)\,\prod t_i^{d_i}
=
\prod_{\cP\in (\bP^1_k)^{(0)}}
\sum_{\bn\in  \bN^I}\mux^0(\bn)\,\prod t_i^{\deg(\cP)\,n_i}
\\
=
\prod_{n\geq 1}
(\sum_{\bn\in  \bN^I}\mux^0(\bn)\,\prod t_i^{\,n.n_i})^{\# X^{(0)}_n}
\end{multline}
could be the (yet to be proved !) relation
\begin{equation}\label{eq:rel:to:be:proved}
\sum_{\bd\in \bN^I}
\muxm(\bd)\,\prod t_i^{d_i}
=
\prod_{n\geq 1}
\left(\sum_{\bn\in  \bN^I}\mux^0(\bn)\,\prod t_i^{\,n.n_i}\right)^{\,\Phi_n(\bP^1)}.
\end{equation}
\begin{rem}\label{rem:conv}
If the latter relation holds, 
it follows easily 
that the LHS of \eqref{eq:rel:to:be:proved} converges in the completed Grothendieck
ring at $t_i=\bL^{-1}$, and that the limit is nonzero: indeed we have $\Phi_n(\bP^1)\in
\fil^{-n}\mk$, hence 
thanks to point \ref{item:conv} of proposition \ref{prop:mu}
the series 
\begin{equation}
\left(
\sum_{\bn\in  \bN^I}
\mux^0(\bn)\,
\prod t_i^{\,n.n_i}
\right)^{\,\Phi_n(\bP^1)}
\end{equation} converges 
in $t_i=\bL^{-1}$ and its limit lies
in 
$1+\fil^{2\,n-n}\hmk=1+\fil^{n}\hmk$.

Moreove, still assuming that \eqref{eq:rel:to:be:proved} holds, using lemma 
\ref{lm:form_expl_prod_eul} below, 
point \ref{item:conv} of proposition \ref{prop:mu} and 
the fact that $\Phi_n(\bP^1)\in\fil^{-n}\mk$, one obtains  
(\cf \cite[proof of corollary 2.23]{Bou:prod:eul:mot})
the following bound on the virtual dimension of $\muxm(\bd)$:
\begin{equation}
\forall \bd\in \bN^I,
\quad 
\dim(\muxm(\bd))\leq \frac{\abs{\bd}}{2}.
\end{equation}
\end{rem}
\begin{notas}\label{notas:bbf}
Let $r\geq 1$ and 
$\bbf=(f_1,\dots,f_r)\in (\bN_{>0})^r$
such that 
\begin{equation}
f_1=f_2=\dots=f_{i_1}<f_{i_1+1}=f_{i_1+2}=\dots=f_{i_2}<f_{i_2+1}=\dots<f_{i_{k-1}+1}=\dots=f_r
\end{equation}
Then for any sequence $(x_n)$  with values in a $\bQ$-algebra $A$ 
 we set
\begin{equation}
\left(x_{\bbf}\right)
\eqdef
\prod_{1\leq \ell \leq k} 
\frac
{x_{f_{i_{\ell}}}(x_{f_{i_{\ell}}}-1)\dots(x_{f_{i_{\ell}}}-i_{\ell}+i_{\ell-1})}
{(i_{\ell}-i_{\ell-1})!}
\end{equation}
(where $i_0=0$ and $i_k=r$).
\end{notas}
We have the following elementary combinatorial lemma.
\begin{lemma}\label{lm:form_expl_prod_eul}
Let $A$ be a $\bQ$-algebra, $E$ a non empty finite set and  $P=1+\sum_{\bn\in \bN^E\setminus \{0\}}a_{\bn}\,\bt^{\,\bn}$
an element of $A[[(t_e)_{e\in E}]]$.
Then for every sequence $(x_n)\in A^{\bN}$ the following relation holds
\begin{multline}
\exp\left(\sum_{n\geq 1}\,a_n\,\log(P(\bt^n)_{e\in E})\right)
\\
=1+\sum_{\bm\in \bN^E\setminus \{0\}}
\left(
\sum_{r\geq 1}
\quad
\sum_{
\substack{
\bbf\in (\bN_{>0})^r
\\~\\
f_1\leq \dots \leq f_r
}
}
(x_{\bbf})                                                                                          
\quad
\sum_{
\substack{
(\bn_1,\dots,\bn_r)\in (\bN^E\setminus \{0\})^r
\\~\\
\sum \bn_i\,f_i=\bm
}
}
\quad
\prod_{i=1}^r a_{\bn_i}
\right)
\,
\bt^{\,\bm}.
\end{multline}
\end{lemma}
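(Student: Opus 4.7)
The plan is to reduce the identity to a direct expansion of the formal expression $\prod_{n\geq 1} P(\bt^n)^{x_n}$. First, I would observe that since $P-1$ has no constant term, the series $\log P(\bt^n)$ is a well-defined element of $A[[(t_e)]]$ whose lowest-degree monomials have total degree at least $n$; hence $\sum_{n\geq 1} x_n \log P(\bt^n)$ converges in the $(t_e)$-adic topology, and the identity
\begin{equation*}
\exp\!\left(\sum_{n\geq 1} x_n \log P(\bt^n)\right)=\prod_{n\geq 1}P(\bt^n)^{x_n},
\end{equation*}
where $P(\bt^n)^{x_n}\eqdef \exp(x_n\log P(\bt^n))$, holds formally from the usual properties of $\exp$ and $\log$ on $1+A[[(t_e)]]^+$ (which are valid since $A$ is a $\bQ$-algebra).

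Next I would expand each factor via the generalized binomial series: since $P(\bt^n)=1+U_n$ with $U_n\eqdef\sum_{\bn\neq 0}a_{\bn}\bt^{n\bn}$ of $(t_e)$-adic valuation $\geq n$, we have
\begin{equation*}
P(\bt^n)^{x_n}=\sum_{k\geq 0}\binom{x_n}{k}U_n^{k}
=\sum_{k\geq 0}\binom{x_n}{k}\sum_{(\bn_1,\dots,\bn_k)\in (\bN^E\setminus\{0\})^k}\!\!\!\!\!\!\left(\prod_{i=1}^{k}a_{\bn_i}\right)\bt^{\,n(\bn_1+\cdots+\bn_k)}.
\end{equation*}
I would then multiply these expansions over all $n\geq 1$. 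A term in the product is obtained by choosing, for each $n$, a nonnegative integer $k_n$ (almost all zero) and an ordered tuple $(\bn^{(n)}_1,\dots,\bn^{(n)}_{k_n})$ of nonzero vectors; it contributes $\prod_n \binom{x_n}{k_n}\prod_{n,i}a_{\bn^{(n)}_i}$ times $\bt^{\sum_n n\,(\bn^{(n)}_1+\cdots+\bn^{(n)}_{k_n})}$.

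The final step is to reindex these contributions by the non-decreasing sequence $\bbf=(f_1\leq\cdots\leq f_r)$ obtained by listing each $n$ a total of $k_n$ times, together with a tuple $(\bn_1,\dots,\bn_r)$ obtained by concatenating the $(\bn^{(n)}_i)$. Given $\bbf$ with blocks of equal values of sizes $i_1,i_2-i_1,\dots,r-i_{k-1}$, the $\bn^{(n)}_i$ inside each block are an ordered tuple (so no symmetry factor is introduced), and the product of the binomial coefficients $\prod_\ell \binom{x_{f_{i_\ell}}}{i_\ell-i_{\ell-1}}$ is exactly the quantity $(x_\bbf)$ of Notation~\ref{notas:bbf}. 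Collecting terms of fixed total degree $\bm=\sum_i f_i\bn_i$ yields the claimed formula. The only subtlety to watch is that the grouping into blocks is consistent with the expansion (within each block one really gets an ordered sum because $U_n^{k_n}$ produces ordered tuples), and that for fixed $\bm$ only finitely many $(r,\bbf,(\bn_i))$ contribute — both are immediate since each $\bn_i$ is nonzero and $f_i\geq 1$.
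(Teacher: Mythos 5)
Your proof is correct. The paper states Lemma~\ref{lm:form_expl_prod_eul} without proof, describing it only as an ``elementary combinatorial lemma'', so there is no reference argument to compare against; your route---rewriting the exponential as $\prod_{n\geq 1}P(\bt^{n})^{x_n}$, expanding each factor via the binomial series $\sum_{k\geq 0}\binom{x_n}{k}U_n^{k}$ with $U_n=\sum_{\bn\neq 0}a_{\bn}\bt^{n\bn}$, and then reindexing the resulting sum over tuples $(k_n)$ and ordered families $(\bn^{(n)}_i)$ by the non-decreasing multi-index $\bbf$ together with the concatenated tuple $(\bn_1,\dots,\bn_r)$---is precisely the natural argument, and the consistency checks you flag (ordered tuples inside each block match $U_n^{k_n}$; finiteness of contributing data for fixed $\bm$) are the right ones. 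Be aware that you have implicitly corrected two misprints in the paper's formulation: in the left-hand side of the lemma the coefficient should be $x_n$, not $a_n$, and in Notation~\ref{notas:bbf} the last factor of each numerator should be $x_{f_{i_\ell}}-i_\ell+i_{\ell-1}+1$ (not $x_{f_{i_\ell}}-i_\ell+i_{\ell-1}$), so that each block indeed contributes the binomial coefficient $\binom{x_{f_{i_\ell}}}{i_\ell-i_{\ell-1}}$ that your reindexing produces.
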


For every $\bd\in \bN^I$, denote by $\muxmt(\bd)$ the
element
\begin{equation}
\sum_{r\geq 1}
\quad
\sum_{
\substack{
\bbf\in \bN_{>0}^r
\\~\\
f_1\leq \dots \leq f_r
}
}
(\Phi_{\bbf}(\bP^1))                                                                                          
\quad
\sum_{
\substack{
(\bn_1,\dots,\bn_r)\in (\{0,1\}^I\setminus \{0\})^r
\\~\\
\sum \bn_{\ell}\,f_{\ell}=\bd
}
}
\quad
\prod_{\ell=1}^r \mux^0(\bn_{\ell}).
\end{equation}
Thus, by the above lemma, establishing \eqref{eq:rel:to:be:proved} amounts to proving the
following identities in $\kovark\otimes \bQ$:
\begin{equation}
\forall \bd\in \bN^I,\quad \class{\bP^{\bd}_{X}}
=\sum_{0\leq \bd'\leq \bd}\muxmt(\bd') \class{\bP^{\bd-\bd'}}.
\end{equation}
Except in some particular simple situations, including the case where $X$ is a
projective space, we do not know how to prove these relations in
$\kovark\otimes \bQ$, and we are not even sure that they indeed hold.
Nevertheless, under the additional hypothesis that the characteristic
of the base field is zero,
we are going to explain how to prove a similar relation in the Grothendieck ring of
Chow motives, using a device forged by Denef and Loeser in the
context of their theory of arithmetic motivic integration.

The idea goes basically as follows: when $k$ is finite the relation \eqref{eq:rel:to:be:proved}
certainly holds after specialization by $\#_k$ (this is because
\eqref{eq:sumcardkmuxm} is true !).
We show that the involved equalities may be derived from
`algebraic $d$-cover of formulas', which in turn allows, thanks to
Denef and Loeser's construction, to do `motivic counting' instead of
`classical counting'. This motivic couting leads to a proof of
\eqref{eq:rel:to:be:proved} (in the Grothendieck ring of motives) along
exactly the same way that classical counting allows to proof \eqref{eq:rel:to:be:proved}
after specialization by $\#_k$.

To illustrate the notions of $d$-cover and motivic couting, we begin by a
very basic example, postponing the precise definitions to a little later.
We refer to \cite{Ha:whatis} for a very nice introduction to these concepts.

Let $k$ be a finite field of cardinality $q$, with $q$ odd. The
elementary fact that there are exactly
$\frac{q}{2}$ nonzero squares in $k$ may be seen as follows:
let $f\,:\,\bG_m\to \bG_m$ the morphism $x\mapsto x^2$; then for
every finite extension $L$ of $k$, the morphism $f$ induces a $2$-to-$1$
map from $\bG_m(L)$ onto the set of squares in $\bG_m(L)$, which in turn may be seen as the
set of elements $x$ in $\bA^1(L)$ satisfying the intepretation of the
first order logic formula \begin{equation}\scF\,:\,'(\exists y,\,x=y^2) \wedge (x\neq 0)'.\end{equation}
We say that $f$ induces an algebraic $2$-cover of the formula
$\scF$ by $\bG_m$. From this derives the counting formula
\begin{equation}
\# \scF(L)=\frac{1}{2}\# \bG_m(L)
\end{equation}
where $\scF(L)=\{x\in L,\,(\exists y\in L,\,x=y^2)\wedge x\neq 0\}$.

Now Denef and Loeser's construction allows to deduce from the fact
that $\bG_m$ is a $2$-cover of $\scF$ not only the `classical
counting' result above but far more generally a `motivic counting'
result, that is, 
\begin{equation}\label{eq:motcount}
\class{\scF}=\frac{1}{2}\class{\bG_m}
\end{equation}
where $\class{.}$ denotes the class in the Grothendieck ring of
motives (here the class of our formula $\scF$ may in fact be defined by relation
\eqref{eq:motcount}; in general, one has of course to define the class of an
arbitrary formula in the Grothendieck ring of motives, which is far
from trivial). In fact the more precise hypothesis
under which one is able to deduce \eqref{eq:motcount}
is that the property that $f$ induces a $2$-to-$1$ map from $\bG_m(L)$
onto $\scF(L)$ does not hold only when $L$ is finite but also when $L$
is a so-called pseudo-finite field. 
In one word, pseudo-finite fields are infinite fields
satisfying any model theoretic property which holds for the finite fields.
In the next section we review briefly first order logic formula, pseudo-finite fields
and the construction of Denef and Loeser.

\subsection{Pseudo-finite fields and the virtual motive of a formula}
A pseudo-finite field is a perfect infinite pseudo algebraically closed
field (\ie every geometrically irreducible $k$-variety has a
$k$-point) which has the following property: once  an algebraic closure
$\sep{k}$ of $k$ is fixed, for every $n\geq 1$ there is exactly one
$k$-extension of degree $n$ in $\sep{k}$. 

One can show that every field $k$ admits a pseudo-finite extension.
Pseudo-finite fields share many properties with finite fields. For
example, let $k$ be a pseudo-finite field, $\sep{k}$ an a algebraic closure
and $k_n$ the unique extension of $k$ of degree $n$ in $\sep{k}$.
One can show that $k_n/k$ is cyclic and that $k_n\subset k_m$ if and
only if $n$ divides $m$.

A first order ring formula  with coefficients
in $k$ (which from now will simply be called a $k$-formula)
is a logical formula built from boolean combinations of polynomial
equalities over $k$ and quantifiers; for example
\begin{equation}
'\exists y, \forall
x, x^2+y^2=z^2\,{}',\quad 'x^2+1=0{} ',\quad '\forall z,\,x=y ',\quad 'x^2=x^3+x+1 \wedge
x\neq 0 '\dots
\end{equation}

Let $\varphi$ be a $k$-formula with $n$ free variables. For every $k$-extension $L$, we can define a
subset $\varphi(L)\subset L^n$ (the set of `$L$-points of
$\varphi$') consisting of all the elements in $L^n$
satisfying the interpretation of the formula $\varphi$ in $L^n$. Note that
this defines in fact a functor $(k-\text{extension})\to (\text{Sets})$.
For example if $\varphi='(\exists y,\,x=y^2) \wedge (x\neq 0)'$ then $\varphi(L)$
will be the set of nonzero squares in $L$. Note also that if $\varphi$
is quantifier free, there exists a  constructible subset $F$ of $\bA^n$
such that for every $k$-extension $L$ we have $\varphi(L)=F(L)$.

Let $\varphi$ and $\psi$ two $k$-formulas with free variables
$x_1,\dots,x_n$ and $y_1,\dots,y_m$ respectively. We say that
$\varphi$ and $\psi$ are equivalent if there exists a formula $\theta$
with free variables $x_1,\dots,x_n,y_1,\dots,y_m$ such that for every
pseudo-finite $k$-extension $K$, $\theta(K)$ is the graph of 
a bijection between $\varphi(K)$ and $\psi(K)$. Substituting in the previous
definition 
'$d$-to-$1$
map from $\varphi(K)$ onto $\psi(K)$'
to 
`bijection between $\varphi(K)$ and $\psi(K)$', 
we obtain the definition of 
`$\varphi$ is a $d$-cover of $\psi$'.
For example the formula $'y\neq 0'$ is a $2$-cover of the formula
$'\exists y,\,(x=y^2 \wedge y\neq 0)'$; here the formula $\theta$ is given
by $'y=x^2{}'$.

A very important class of formulas is given by the so-called Galois
formula. Let $X$ be
a normal, affine, irreducible variety defined over $k$, and $\pi\,:\,Y\to X$ be an
unramified Galois cover with group $G$. Let $L$ be a $k$-extension and
$x$ be an element of $X(L)$. Recall that the decomposition subgroups
of $x$ with respect to $\pi$ are the stabilizers of the action of $G$
on the $\Gal(\sep{L}/L)$-orbits of the geometric fiber over $x$. You
may then check that being given a subgroup $D$ of $G$, $x$ admits $D$
as a decomposition subgroup if and only if $x$ lifts to an $L$-point
of $Y/D$ but does not lift to an $L$-point of $D'$ for every strict
subgroup $D'$ of $D$. Hence we see that there exists a $k$-formula
$\varphi_{Y,X,D}$ whose $L$-points, for every $k$-extension $L$, are the $L$-points of $X$ admitting
$D$ as a decomposition subgroup. You may check that the morphism
$Y/D\to X$ makes the formula $\varphi_{Y,Y/D,D}$ a $\frac{\#
  N_G(D)}{\# D}$-cover of the formula $\varphi_{X,Y,D}$. Galois
formulas are the key tool for eliminating quantifiers in the theory of
pseudo-finite fields, see \cite{FrJar:FA} and \cite{Nic:rel:motive}.

Let $\kpff$ denote the Grothendieck ring of the theory of pseudo-finite
fields over $k$: as a group, it is generated by the symbols
$\class{\varphi}$, where $\varphi$ is a $k$-formula, modulo the relations
$\class{\varphi}=\class{\psi}$ whenever $\varphi$ and $\psi$ are
equivalents and the `scissor relations'
$
\class{\varphi\vee\psi}+\class{\varphi\wedge \psi}
=
\class{\varphi}+\class{\psi}
$
whenever $\varphi$ and $\psi$ have the same set of free variables.
We endow it with a ring structure by defining the product of $\class{\varphi}$  by $\class{\psi}$
to be $\class{\varphi\vee \psi}$ if $\varphi$ and $\psi$ have disjoint
sets of free variables (which of course we may always assume, by
considering equivalent formulas). Now we are ready to state the result
of Denef and Loeser. Their motivation for it was the construction of a
motivic incarnation of their theory of arithmetic motivic integration
(see \cite{DeLo:def_sets_motives} and \cite{DeLo:grot_pff}).

Recall that when the field $k$ has characteristic zero, there exists a
unique morphism $\chim\,:\,\kovark\to \kochmk$ which maps the class of
a smooth projective variety to the class of its Chow motive.
\begin{thm}\label{thm:DL}
Let $k$ be a characteristic zero field. There is a unique ring morphism
\begin{equation}\label{eq:chif}
\chif\,:\,\kpff\longto \kochmk\otimes \bQ
\end{equation}
wich maps the class of a quantifier free formula to the
image by $\chim$ of the class of the associated constructible subset and which 
satisifies for every formulas $\varphi,\psi$ such that $\varphi$ is a
$d$-cover of $\psi$ the relation\footnote{In fact Denef and Loeser proved the
  existence and unicity of the
morphism \eqref{eq:chif} under the hypothesis that it satisfies the relation \eqref{eq:rel:d:covering}
only for a particular type of $d$-covers, those induced by Galois
formulas. The fact that such a morphism satisfies \eqref{eq:rel:d:covering}
for every $d$-cover is stated without proof by Hales in \cite{Ha:whatis}, and
proved by Nicaise in \cite{Nic:rel:motive}.}
\begin{equation}\label{eq:rel:d:covering}
\chif(\varphi)=d\,\chif(\psi).
\end{equation}
\end{thm}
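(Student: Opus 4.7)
The strategy is to combine quantifier elimination in the theory of pseudo-finite fields with a direct construction on Galois formulas. First I would invoke the Galois-stratification-based quantifier elimination for pseudo-finite fields (\cf \cite{FrJar:FA}): every $k$-formula is equivalent in this theory to a boolean combination of Galois formulas $\varphi_{X,Y,D}$ and quantifier-free formulas. Via the scissors relations in $\kpff$, the corresponding classes therefore generate this Grothendieck ring.

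For uniqueness, the hypothesis that $\chif$ respects $d$-covers arising from Galois formulas already forces its value on every generator. Indeed, as recalled before the theorem, $\varphi_{Y,Y/D,D}$ is a $\frac{\# N_G(D)}{\# D}$-cover of $\varphi_{X,Y,D}$, so any $\chif$ satisfying \eqref{eq:rel:d:covering} must obey
\begin{equation*}
\chif(\class{\varphi_{X,Y,D}}) = \frac{\# D}{\# N_G(D)}\,\chif(\class{\varphi_{Y,Y/D,D}}).
\end{equation*}
Iterating this procedure for the intermediate cover $Y\to Y/D$ (which has strictly smaller Galois group $D$) and applying scissors relations to eliminate inequivalent decomposition subgroups eventually expresses each $\chif(\class{\varphi_{X,Y,D}})$ as a $\bQ$-linear combination of $\chim(\class{Y/D'})$ for various $D'\leq G$. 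Since $\chif$ is prescribed on quantifier-free formulas, it is thus determined on all of $\kpff$.

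For existence, I would turn the above recursion into a definition on Galois formulas and then verify that every relation holding in $\kpff$ is preserved, namely equivalence of formulas, scissors relations, and the multiplicativity defining the ring structure. The crux is to show independence of the chosen Galois presentation: two Galois covers encoding the same formula class can always be dominated by a third, and the equality of the two candidate values then reduces to combinatorial identities in the permutation representation theory of finite groups. This verification is the technical heart of \cite{DeLo:def_sets_motives,DeLo:grot_pff}.

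The main obstacle is the strengthening (not proved by Denef--Loeser themselves but by Nicaise in \cite{Nic:rel:motive}) that \eqref{eq:rel:d:covering} holds for \emph{every} $d$-cover, not just for those arising from Galois formulas. To establish this I would, following \cite{Nic:rel:motive}, refine both $\varphi$ and $\psi$ simultaneously by a Galois stratification adapted to the witnessing formula $\theta$ of the $d$-cover, so that over each piece of the refinement $\theta$ restricts to a Galois-type $d$-cover; summing the resulting Galois $d$-cover identities then yields \eqref{eq:rel:d:covering}. Producing a Galois refinement compatible with $\theta$ (rather than merely with $\varphi$ and $\psi$ separately) is the delicate point, and it requires a careful use of the model-theoretic description of definable sets over pseudo-finite fields.
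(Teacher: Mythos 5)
The paper itself gives no proof of this theorem: it is imported wholesale from the cited literature, with the footnote attributing the existence and uniqueness (using only Galois-type $d$-covers) to Denef--Loeser \cite{DeLo:def_sets_motives,DeLo:grot_pff} and the strengthening to arbitrary $d$-covers to Nicaise \cite{Nic:rel:motive}. So there is no internal argument to compare against. Your sketch is a reasonable and essentially accurate high-level outline of what those references do: Galois stratification gives quantifier elimination and hence a generating set for $\kpff$; the $d$-cover relation applied to the cover $Y/D\to X$ forces the value of $\chif$ on the Galois formula $\varphi_{X,Y,D}$ in terms of $\chim$ of quotient varieties (this is the Denef--Loeser assignment $\chif(\varphi_{X,Y,D})=\tfrac{|D|}{|N_G(D)|}\sum_{h}(-1)^{\cdots}\chim(Y/H_h)$ type recursion); existence then reduces to independence of the Galois presentation, which is the genuinely hard point; and the upgrade to arbitrary $d$-covers follows Nicaise's refinement of the witnessing formula by a compatible Galois stratification. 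You are honest that the two crux points (well-definedness of the recursion, and compatibility of the stratification with the witnessing formula $\theta$) are not actually carried out, which is appropriate since the paper you are reconstructing does not carry them out either. In short: this is a citation-level theorem in the source, and your proposal is a fair précis of the cited proofs rather than a new argument.
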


Recall that the reader who may not feel comfortable with motives could
as well consider that the Grothendieck ring of motives is nothing else
that the Grothendieck of varieties localized at the class of the
affine line.

We would like to use  Denef-Loeser machinery to
give an other characterization of the image the family $\{\Phi_n(X)\}$
in $\kochmk\otimes \bQ$ by the morphism $\chim$.
By rather straightforward cut-and-paste arguments, we reduce to the case $X$ affine, normal and irreducible. 

What we have in mind is that $\Phi_n(X)$ 
should be the class of a formula such that for 
every pseudo-finite extension $K$ of $k$, the $K$-points of this
formula are in natural $1$-to-$1$ correspondence with the closed points of degree
$n$ of $X_K$. Now closed points of degree $n$ are particular instances
of effective divisors of degree $n$, so they form a
subset of the set of $K$-points of $\sym{X}{n}$ and in fact of
$(\sym{X}{n})^0$, where $(\sym{X}{n})^0$ is the image of the open set
$(X^n)^0$ consisting of those $n$-uples whose coordinates are pairwise
distinct. Now the morphism $(X^n)^0\to (\sym{X}{n})^0$ is plainly an unramified Galois
cover with Galois group $\mfS_n$. And we may describe the subset of
$(\sym{X}{n})^0$ of closed points of degree $n$ exactly as those
elements of $(\sym{X}{n})^0(k)$ having  a decomposition
subgroup cyclic of order $n$ with respect to the above Galois cover. 
There is therefore a Galois formula $\wt{\Phi}_n(X)$ whose $K$-points identifies naturally with
the set of closed points of degree $n$ of $X_K$ for every pseudo-finite
$k$-extension $K$. It is easy to see that its
equivalence class is uniquely determined (that is, does not depend on
the choice of an affine embedding of $X$), and we define $\phinm(X)$ to be
the image of the class of this formula by the morphism $\chif$.
\begin{prop}
Let $X$ be a quasi-projective variety defined over $k$
For every $n$, we have 
\begin{equation}
\chim(\Phi_n(X))=\phinm(X).
\end{equation}
In other words, we have the relation
\begin{equation}
\sum_{n\geq 1}\chim(\sym{X}{n})\,t^n=\prod_{n\geq 1}(1-t^n)^{\,-\phinm(X)}.
\end{equation}
\end{prop}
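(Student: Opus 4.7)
The plan is to reduce the stated product formula to the coefficientwise identity $\chim(\Phi_n(X))=\phinm(X)$ for every $n\geq 1$. Indeed, by the very definition of $\Phi_n$ (Möbius inverted from $\Psi_n = t \frac{d\log}{dt} \ZHWm(X,t)$), the factorization $\ZHWm(X,t)=\prod_{n\geq 1}(1-t^n)^{-\Phi_n(X)}$ already holds in $1+(\kovark\otimes\bQ)[[t]]^+$, so applying $\chim$ and comparing with the target expression reduces the proposition to matching exponents. By the relation $\Psi_n=\sum_{d\mid n} d\,\Phi_d$ and classical Möbius inversion, it is in fact enough to prove $\chim(\Psi_n(X))=\chif([\wt{\Psi}_n(X)])$ for every $n$, together with a formula-level identity $[\wt{\Psi}_n(X)]=\sum_{d\mid n} d\,[\wt{\Phi}_d(X)]$ in $\kpff$, for a suitably constructed $k$-formula $\wt{\Psi}_n(X)$.

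I would first reduce to the case where $X$ is affine, normal, and geometrically irreducible, by cut-and-paste additivity of both $\Psi_n$ and of the formula classes. In that case $\wt{\Psi}_n(X)$ is built so that its $K$-points, for any pseudo-finite extension $K$ of $k$, are canonically in bijection with $X(K_n)$, where $K_n$ denotes the unique degree-$n$ extension of $K$ in a fixed algebraic closure. Concretely, one works with the unramified Galois cover $(X^n)^0\to(\sym{X}{n})^0$ of group $\mfS_n$ and selects the decomposition data encoding the field of definition of each Frobenius orbit, much as $\wt{\Phi}_n(X)$ was built.

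For the identity $[\wt{\Psi}_n(X)]=\sum_{d\mid n} d\,[\wt{\Phi}_d(X)]$ in $\kpff$, I would partition $X(K_n)$ according to the degree $e\mid n$ of the field of definition of the point. Each closed point $P$ of $X_K$ of degree $e$ contributes exactly $e$ elements of $X(K_n)$, corresponding to the $e$ $K$-embeddings of $\kappa(P)\simeq K_e$ into $K_n$; this identification is natural enough to realize the corresponding piece of $\wt{\Psi}_n(X)$ as an $e$-cover of $\wt{\Phi}_e(X)$ in the sense of the theory, so that Theorem \ref{thm:DL} yields the stated relation upon applying $\chif$.

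The main obstacle is the motivic equality $\chim(\Psi_n(X))=\chif([\wt{\Psi}_n(X)])$. My strategy is to match two polynomial expressions in symmetric-power classes: on the motivic side, formula \eqref{eq:form_phi_n} already presents $\Psi_n(X)$ as a $\bQ$-linear combination of products $\prod_i \class{\sym{X}{m_i}}$; on the formula side, the natural map $\wt{\Psi}_n(X)\to\sym{X}{n}$ sending $x\mapsto \sum_{\sigma\in\Gal(K_n/K)}\sigma(x)$ fibers $\wt{\Psi}_n(X)$ over strata of $\sym{X}{n}$ indexed by divisor types, and an inclusion–exclusion across these strata should reproduce the very same coefficients at the level of $\kpff\otimes\bQ$. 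Since $\chif$ coincides with $\chim$ on quantifier-free classes such as $[\sym{X}{m_i}]$, applying $\chif$ to this formula-level identity delivers $\chim(\Psi_n(X))$, and Möbius inversion then closes the argument.
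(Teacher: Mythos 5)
Your reduction is sound: via $\Psi_n=\sum_{d\mid n}d\,\Phi_d$ and Möbius inversion, the proposition is equivalent to proving $\chim(\Psi_n(X))=\sum_{d\mid n}d\,\Phi_{d,\text{mot}}(X)$ for every $n$, and your second claim, that $\chif\bigl(\symb{\wt{\Psi}_n(X)}\bigr)=\sum_{d\mid n}d\,\Phi_{d,\text{mot}}(X)$, does follow from Theorem \ref{thm:DL} by stratifying $X(K_n)$ according to the degree $e$ of the field of definition of the point, each stratum being an $e$-cover of $\wt{\Phi}_e(X)$. This is a genuine reorganisation of the paper's argument, which never introduces a formula $\wt{\Psi}_n(X)$: instead one proves, for every $m$, the identity
\begin{equation*}
\chim\bigl(\class{\sym{X}{m}}\bigr)=\sum_{r\geq 1}\;\sum_{\substack{\bbf\in\bN_{>0}^r\\ f_1\leq\dots\leq f_r}}\bigl(\Phi_{\bbf,\text{mot}}(X)\bigr)\,\#\cA_{r,\bbf,m},
\end{equation*}
and Lemma \ref{lm:form_expl_prod_eul} shows this is an equivalent way to phrase the exponent identity.

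The gap is precisely in the step you flag as the main obstacle, $\chim(\Psi_n(X))=\chif\bigl(\symb{\wt{\Psi}_n(X)}\bigr)$: your sketch does not establish it, and it is where essentially all of the combinatorial content of the proof lies. Two concrete difficulties. First, $\Psi_n(X)$ is not the class of a constructible set but a $\bQ$-linear combination of products $\prod_i\class{\sym{X}{m_i}}$ with the coefficients of \eqref{eq:form_phi_n}; the agreement of $\chif$ and $\chim$ on quantifier-free classes therefore only identifies $\chim(\Psi_n(X))$ with $\chif$ of the corresponding element of $\kpff\otimes\bQ$, and one still must show that this element has the same $\chif$-image as $\symb{\wt{\Psi}_n(X)}$. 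Second, the image of $\wt{\Psi}_n(X)\to\sym{X}{n}$, $x\mapsto\sum_{\sigma}\sigma(x)$, consists only of cycles of the form $\tfrac{n}{e}\,P$ with $P$ a closed point of degree $e\mid n$ --- a low-dimensional constructible subset of $\sym{X}{n}$ when $n>1$ --- so an inclusion--exclusion across the strata this map actually meets cannot, as stated, manufacture the full symmetric-power products $\prod_i\class{\sym{X}{m_i}}$ appearing in \eqref{eq:form_phi_n}. What is actually needed is the machinery the paper deploys on the $\sym{X}{m}$ side: the partition of $\sym{X}{m}$ into Galois formulas $\psi_{\bbf,\bn}$, the $\#\grsym_{\bn}$-covers $\varphi_{\bbf}\to\psi_{\bbf,\bn}$, the falling-factorial identity \eqref{eq:psifimi1} giving $\class{\varphi_{\bbf}}=\class{\wt{\Phi}_{\bbf}(X)}$, and the orbit count over $\cA_{r,\bbf,m}$. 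Your preliminary reductions are correct, but without an argument of that precision the proof remains incomplete.
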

\begin{proof}
As before, we easily reduce to the case $X$ affine, normal, irreducible.
For every positive integer $r$, $m$ and every $\bbf\in \bN_{>0}^r$,
denote by $\cA_{r,\bbf,m}$ the set
\begin{equation}
\left\{
(n_1,\dots,n_r)\in (\bN_{>0})^r
,\quad
\sum_{\ell=1}^r n_{\ell}\,f_{\ell}=m
\right\}.
\end{equation}
By lemma \ref{lm:form_expl_prod_eul}, we have to show for every positive integer $m$ the relation
\begin{equation}\label{eq:rel_xm}
\chim\left(\class{\sym{X}{m}}\right)
=
\sum_{r\geq 1}
\quad
\sum_{
\substack{
\bbf=(f_1,\dots,f_r)\in \bN_{>0}^r
\\~\\
f_1\leq \dots \leq f_r
}
}
\left(\Phi_{\bbf,\text{\textnormal{\scriptsize{mot}}}}(X)\right)
\,
\# \cA_{r,\bbf,m}.
\end{equation}
The latter formula may be seen as the motivic counterpart of the
following relation, valid over a finite field $k$:
\begin{equation}\label{eq:rel_xm_kfini}
\# \sym{X}{m}(k)
=
\sum_{r\geq 1}
\quad
\sum_{
\substack{
\bbf=(f_1,\dots,f_r)\in \bN_{>0}^r
\\~\\
f_1\leq \dots \leq f_r
}
}
\left(\# X^{(0)}_{\bbf}\right)
\,
\# \cA_{r,\bbf,m}.
\end{equation}
Of course the latter relation follows immediatly from 
the decomposition of the Hasse--Weil zeta function into Euler product,
but the reader may check that it can also be recovered by a direct counting argument.

Now we can apply the strategy described above: we show that this counting argument can be derived from $d$-covers of
formulas, and apply the result of Denef and Loeser to transform the
'classical counting' argument into a 'motivic counting' argument.

Let $m\geq 1$, $r\geq 1$ and
$\bbf\in (\bN_{>0})^r$ 
such that  $f_1\leq \dots \leq f_r$.
We use notations \ref{notas:bbf}.
There is a natural action of 
$\grsym_{\bf }
\eqdef
\underset{\ell=1}{\prod}^{k}
\grsym_{i_{\ell}-i_{\ell-1}}$
on 
$\cA_{r,\bbf,m}$ 
and on
$\underset{i=1}{\overset{r}{\prod}} \left(\sym{X}{f_i}\right)_0$.
Let $Z_{\bbf}$ denote the  $\grsym_{\bbf}$-invariant open set of 
$\underset{i=1}{\overset{r}{\prod}} \sym{X}{f_i}_0$
defined by
\begin{equation}
Z_{\bbf}\eqdef \prod_{1\leq \ell\leq k }(\sym{X}{f_i}_0)^{i_{\ell}-i_{\ell-1}}_0
\end{equation}
(recall that $Y^n_0$ denotes the open set of $Y$ consisting of $n$-uples 
whose coordinates are pairwise distincts, and $\sym{Y}{n}_0$ the image of
$Y^n_0$ by $Y^n\to \sym{Y}{n}$).

Let $\varphi_{\bbf}$ be a formula 
whose set of $K$-points, for every pseudo-finite $k$-extension $K$, 
is $Z_{\bbf}(K)\cap \produ{1\leq i\leq r} \wt{\Phi}_{f_i}(X)(K)$.
One easily check the following relation in $\kpff$:
\begin{equation}\label{eq:psifimi1}
\class{\varphi_{\bbf}}
=
\prod_{1\leq \ell\leq k}
\,
\prod_{j=0}^{i_{\ell}-i_{\ell-1}-1}
\,
\left(\class{\wt{\Phi}_{f_{i_{\ell}}}(X)}-j\right)
=\class{\wt{\Phi}_{\bbf}(X)}.
\end{equation}
Let $\bn\in \cA_{r,\bbf,m}$.
Denote by $\grsym_{\bn}$ the stabilizator of $\bn$
under the action of $\grsym_{\bbf}$, and by 
$
\pi_{\bbf,\bn}
$
the $k$-morphism
$
Z_{\bbf}
\longto 
\sym{X}{m}
$
wich maps the $r$-uple of zero-cycles $(\cC_1,\dots,\cC_r)$
to $\sum_{\ell} n_{\ell}\,\cC_{\ell}$. It factors through
 $Z_{\bbf}/\grsym_{\bn}$.
Let $\psi_{\bbf,\bn}$ be a formula on $\sym{X}{m}$ 
whose set of $K$-points, for every pseudo-finite $k$-extension $K$,
is $\pi_{\bbf,\bn}(\varphi_{\bbf}(K))$.
Thus $\psi_{\bbf,\bn}(K)$
is the set of $K$-rationals zero-cycles which can be written 
$\cC=\sum_{i=1}^r n_{i}\,\cP_{i}$
where $\cP_{i}$ is a closed point of degree $f_{i}$ on $X_K$ and
$\cP_{i}\neq \cP_{j}$ whenever $f_{i}=f_{j}$.
Note that $\pi_{\bbf,\bn}^{-1}(\cC)$ is then a
$\grsym_{\bn}$-orbit. Therefore 
$\varphi_{\bbf}$ is a $\# \grsym_{\bn}$-covering of $\psi_{(\bbf,\bn)}$
and the motivic counting formula \eqref{eq:rel:d:covering} yields
\begin{equation*}
\chif\left(
\symb{\psi_{\bbf,\bn}}
\right)
=
\frac{1}{\#{\grsym_{\bn}}}
\chif\left(
\symb{\varphi_{\bbf}}
\right).
\end{equation*}

Let $\cA^0_{r,\bbf,m}\subset \cA_{r,\bbf,m}$ denote a system of representatives 
of $\cA_{r,\bbf,m}/\grsym_{\Gammaf}$.
We have
\begin{equation}
\sum_{\bn\in \cA^0_{r,\bbf,m}}
\chif\left(
\class{\psi_{\bbf,\bn}}
\right)
=
\Big(
\sum_{\bn\in \cA^0_{r,\bbf,m}}
\frac{1}{\#{\grsym_{\bn}}}
\Big)
\,
\chif\left(
\class{\varphi_{\bbf}}
\right)
=
\frac{\# \cA_{r,\bbf,m}}
{\# {\grsym_{\bbf}}}
\chif\left(
\class{\varphi_{\bbf}}
\right).
\end{equation}
Thus from \eqref{eq:psifimi1} we deduce the relation
\begin{equation}\label{eq:psifimi2}
\sum_{\bn\in \cA^0_{r,\bbf,m}}
\chif\left(
\class{\psi_{\bbf,\bn}}
\right)
=
\left(\Phi_{\bbf,\text{\textnormal{\scriptsize{mot}}}}(X)\right)\,\# \cA_{\bbf,m}.
\end{equation}

Moreover the above description of $\psi_{\bbf,\bn}(K)$ shows
immediatly that every element of 
$\sym{X}{m}(K)$ is in $\psi_{\bbf,\bn}(K)$ for
a unique $\bbf$ and a $\bn\in \cA_{r,\bbf,m}$ unique modulo the action of
$\grsym_{\bbf}$.
Thus the formulas
\begin{equation}
\left(
\psi_{r,\bbf,\bn}
\right)_{
\substack{
r\geq 1, 
\\ \\
\bbf\in \bN_{>0}^r,
\\ \\
f_1\leq \dots \leq f_r,
\\ \\
\bn\in \cA^0_{r,\bbf,m}.
}
}
\end{equation}
form a partition of  $\sym{X}{m}$.
This concludes the proof of the 
relation
\eqref{eq:rel_xm}.

\end{proof}
Now we return to the case of our initial smooth projective toric
variety $X$. In order to show the validity of the relation
\begin{equation}\label{eq:rel:to:be:proved:bis}
\sum_{\bd\in \bN^I}
\muxm(\bd)\,\prod t_i^{d_i}
=
\prod_{n\geq 1}
\left(\sum_{\bn\in  \bN^I}\mux^0(\bn)\,\prod t_i^{\,n\,n_i}\right)^{\,\phinm(\bP^1)}
\end{equation}
in the Grothendieck ring of motives (tensorized with $\bQ$), we apply
exactly the same strategy that in the proof of the preceding proposition.
Since the proof is very similar and the only real novelty consists in 
dealing with more intricate notations, it will not be given in these
notes and we refer to \cite{Bou:prod:eul:mot} for more details.

In the next section, \eqref{eq:rel:to:be:proved} will allow us to show that
the answers to questions \ref{ques:mot} and \ref{ques:fin:bis}(2) are
affirmative for a toric variety $X$,
with a constant $c$ which may be expressed as
\begin{align}\label{eq:def:cmot}
c_{\text{mot}}(X)
&
\eqdef
\frac{\bL^{\dim(X)}}{(1-\bL^{-1})^{\rk(\Pic(X))}}
\sum_{\bd\in \bN^I}
\muxm(\bd) 
\,
\bL^{-\abs{\bd}}
\\
&=
\frac{\bL^{\dim(X)}}{(1-\bL^{-1})^{\rk{\Pic(X)}}}
\prod_{n\geq 1}
\left(\sum_{\bn\in  \bN^I}\mux^0(\bn)\,\bL^{-n\,\abs{\bn}}\right)^{\,\phinm(\bP^1)}.
\end{align}
But an argument analogous to the one used to establish \eqref{eq:mot:rel:toric}
shows that for every $n\geq 1$ we have
\begin{equation}
\sum_{\bn\in  \{0,1\}^I}
\mux^0(\bn)\,\bL^{\,n(\# I-\bn)}
=
(\bL-1)^{\rk(\Pic(X))}\,\psinm{X}
\end{equation}
where $\psinm{X}$ denote the image of $\psin{X}$ by $\chim$. We use
the fact that, contrarily to $\psin{.}$, $\psinm{.}$ is multiplicative, \ie
satisfies $\psinm{Y\times Z}=\psinm{Y}\psinm{Z}$. One can prove this
by motiving counting, see \cite{Bou:prod:eul:mot}. It is also an immediate consequence of the fact,
proved by F.Bittner in \cite{Hei:func_eq},
that the $\lambda$-structure on $\kochmk$ defined by the Hasse--Weil
zeta function is special (see \cite{Gor:Adam}).

Thus the constant $c_{\text{mot}}(X)$ may be rewritten as
\begin{equation}
\frac{\bL^{\dim(X)}}{(1-\bL^{-1})^{\rk{\Pic(X)}}}
\prod_{n\geq 1}
\left(
(1-\bL^{-1})^{\rk(\Pic(X))}\,\frac{\psinm{X}}{\bL^{\,n\dim(X)}}
\right)^{\,\phinm(\bP^1)}
\end{equation}
and the latter may be seen as a motivic analog of 
\eqref{eq:tam} in the case of a toric variety $X$.

\subsection{The error terms}\label{subsec:error}
In this section, we show that questions 
\ref{ques:fin}, \ref{ques:mot} and \ref{ques:fin:bis} have an
affirmative answer for smooth projective toric varieties. 
Having at our disposal the results on the M\"obius inversion function
discussed in the previous sections, it is
essentially a matter of controling the error terms.

Let us begin by the study of 
\begin{equation}\label{eq:limhomubltanticanX}
\lim_{
\substack{
~
\\
y\in \ceff(X)^{\vee}\cap \Pic(X)^{\vee}
\\
\dist(y,\partial \ceff(X)^{\vee})\to +\infty
}
}
\class{\HOM_{U}(\bP^1,X,y)}\,\bL^{-\acc{y}{\antican{X}}}.
\end{equation}
The involved quantity was previously shown to equal
\begin{equation}\label{eq:expr:for:L^yanticanX}
\frac{\bL^{\dim(X)}}{(1-\bL^{-1})^{\rk(\Pic(X))}}
\sum_{0\leq \bd\leq y}
\muxm(\bd) 
\,
\bL^{-\abs{\bd}}
\,
\prod_{i\in I}(1-\bL^{-\acc{y}{D_i}+d_i-1}).
\end{equation}
Let us write the latter expression as 
$n(y)_{\text{main}}+n(y)_{\text{error}}$
where
\begin{equation}
n(y)_{\text{main}}
\eqdef
\frac{\bL^{\dim(X)}}{(1-\bL^{-1})^{\rk(\Pic(X))}}
\sum_{0\leq \bd\leq  y}
\muxm(\bd) 
\,
\bL^{-\abs{\bd}}.
\end{equation}
Recall that the condition  $\bd\leq  y$ may be rewritten 
$d_i\leq \acc{y}{D_i}$ for all $i$, where the
$D_i$'s are the boundary divisors of the toric variety $X$.
And since the $D_i$'s generate $\ceff(X)$, the condition 
\begin{equation}
\dist(y,\partial \ceff(X)^{\vee})\to +\infty
\end{equation}
is equivalent to 
\begin{equation}\label{eq:cond:yDi}
\forall i\in I,\quad \acc{y}{D_i}\to +\infty.
\end{equation}
Thus we have (see remark \ref{rem:conv} and \eqref{eq:def:cmot})
\begin{equation}
\lim_{
\substack{
~
\\
y\in \ceff(X)^{\vee}\cap \Pic(X)^{\vee}
\\
\dist(\bd,\partial \ceff(X)^{\vee})\to +\infty
}
}
n(y)_{\text{main}}
=
\frac{\bL^{\dim(X)}}{(1-\bL^{-1})^{\rk(\Pic(X))}}
\sum_{\bd\in \bN^I}
\muxm(\bd) 
\,
\bL^{-\abs{\bd}}
=
c_{\text{mot}}(X).
\end{equation}

Let us turn to the study of the term 
$n(y)_{\text{error}}$.
From the above expressions and the exclusion-inclusion principle 
it is straightforward that it may be
written as an alternating sum  of the terms
\begin{equation}
n_{J_1,J_2}(y)
\eqdef
\frac{\bL^{\dim(X)-\# J_2}}
{(1-\bL^{-1})^{\rk{\Pic(X)}}}\,
\bL^{-\sum_{i\in J_2}\acc{y}{D_i}}
\sum_{
\substack{\bd\in \bN^I
\\ 
\forall i\in J_1,\quad \acc{y}{D_i}< d_i
\\
\forall i\in J_2,\quad \acc{y}{D_i}\geq d_i
}}
\muxm(\bd)
\bL^{-\sumu{i\notin J_2} d_i}
\end{equation}
where $(J_1,J_2)$ runs over all the pair of subsets of $I$ with $J_2$
non empty and $J_1\cap J_2$ empty. We are going to show
that for every such pair $(J_1,J_2)$ one has
\begin{equation}\label{eq:lim:nJ}
\lim_{
\substack{
~
\\
y\in \ceff(X)^{\vee}\cap \Pic(X)^{\vee}
\\
\dist(y,\partial \ceff(X)^{\vee})\to +\infty
}
}
n_{J_1,J_2}(y)
=
0.
\end{equation}
Note that strictly speaking we should first show that
$n_{J_1,J_2}(y)$ is indeed well defined, since it involves an
infinite summation over $(d_i)\in \bN^{J_1}$ whose convergence is not
a priori clear; the reader may
check that all the necessary arguments are given below.

We will exploit the  fact 
(already used  in section \ref{subsec:sens}) that every polyedral rational cone 
may be written as the
support of a regular fan (the support of a fan is the union of its cones),
see \cite[Théorème 11]{Bry:toric}; the geometric
significance of this result is the existence of equivariant resolution
of singularities for toric varieties.
Nevertheless, the reader may check that we could easily avoid the use of
this result when dealing with \eqref{eq:limhomubltanticanX} (or 
\eqref{eq:limhomubltanticanX:fin}); all that we need to make the
arguments given below work is a finite family of generators of
$\ceff(X)^{\vee}\cap \Pic(X)^{\vee}$. 
But when we will study the
degree zeta functions, it will be important to work with regular cones
(see the remark following \eqref{eq:dec:zj}).

So let $\Delta$ be a regular fan of $\Pic(X)^{\vee}$ whose support is
$\ceff(X)^{\vee}$ (which will be assumed to be fixed for the remainder
of the section).
If $\delta$ is a cone of $\Delta$, let $\delta(1)$ denote the set of its rays, and let $\delta(1)_{J_1,J_2}$ denote 
the subset of $\delta(1)$ consisting of those elements $\rho$ satisfying
\begin{equation}
\forall i\in J_1\cup J_2,\quad \acc{y_{\rho}}{D_i}=0
\end{equation}
(where $y_{\rho}$ denotes the generator of $\Pic(X)^{\vee}\cap \rho$).
In particular, if $\delta(1)_{J_1,J_2}\neq \delta(1)$, we have
\begin{equation}
\lim_{
\substack{
y=\sum_{\rho\in \delta(1)}n_{\rho}y_{\rho},\quad
n_{\rho}\in \bN^{\delta(1)}
\\
~
\\
\forall i\in I,\quad \acc{y}{D_i}\to +\infty
}}
\quad
\sum_{\rho\in \delta(1)\setminus \delta(1)_{J_1,J_2}} n_{\rho}= +\infty.
\end{equation} 
Since the maximal cones of $\Delta$ cover
$\ceff(X)^{\vee}$ and $\Delta$ consists of finitely many regular cones, it is
straightforward to convince oneself that \eqref{eq:lim:nJ} will be
proven once we have established the following: for every maximal cone $\delta\in \Delta$, one has
\begin{equation}\label{eq:lim:nJ:delta}
\lim_{
\substack{
(n_{\rho})\in \bN^{\delta(1)}
\\
~
\\
\sumu{\rho\notin \delta(1)_{J_1,J_2}}
n_{\rho}
\to +\infty
}
}
n_{J_1,J_2}\left(\sum_{\rho\in \delta(1)}n_{\rho} y_{\rho}\right)
=
0.
\end{equation}
Note that since $\delta$ is maximal and $J_2$ is non empty,
$\delta(1)_{J_1,J_2}$ is necessarily a proper subset of $\delta(1)$.
The equality 
\begin{equation}
n_{J_1,J_2}\left(\sum_{\rho\in \delta(1)}n_{\rho} y_{\rho}\right)
=n_{J_1,J_2}\left(\sum_{\rho\notin \delta(1)_{J_1,J_2}}n_{\rho} y_{\rho}\right),
\end{equation}
shows that to prove \eqref{eq:lim:nJ:delta} it suffices to prove that the
series
\begin{equation}\label{eq:series:nrho}
\sum_{
(n_{\rho})\in \bN^{\delta(1)\setminus \delta(1)_{J_1,J_2}}
}
n_{J_1,J_2}\left(\sum_{\rho\notin \delta(1)_{J_1,J_2}}n_{\rho} y_{\rho}\right)
\end{equation}
converges. But up to a constant factor it equals
\begin{equation}
\sum_{
\substack{
(n_{\rho})\in \bN^{\delta(1)\setminus \delta(1)_{J_1,J_2}}
\\
\bd\in \bN^I
\\
\forall i\in J_1,\quad \sum n_{\rho}\acc{y_{\rho}}{D_i}< d_i
\\
\forall i\in J_2,\quad \sum n_{\rho}\acc{y_{\rho}}{D_i}\geq d_i
}
}
\muxm(\bd)\bL^{-\sumu{i\notin J_2} d_i-\sumu{i\in J_2}\sum n_{\rho}\acc{y_{\rho}}{D_i}}
\end{equation}
and thus may be rewritten as
\begin{equation}\label{eq:rewriting:J1:J2}
\sum_{
\bd\in \bN^I
}
\muxm(\bd)
\,
\bL^{-\abs{\bd}} 
\,
R(\bd)
\end{equation}
where
\begin{equation}
R(\bd)
\eqdef
\sum_{\be\in \bN^{J_2}}
\bL^{-\sum_{i\in J_2}e_i}
N(\bd,\be)
\end{equation}
and $N(\bd,\be)$ is the cardinality of the set of elements 
$(n_{\rho})\in \bN^{\delta(1)\setminus \delta(1)_{J_1,J_2}}$
satisfying
\begin{equation}\label{eq:nde:1}
\forall i\in J_2,\quad
\sum n_{\rho}\acc{y_{\rho}}{D_i}=d_i+e_i
\end{equation}
and
\begin{equation}\label{eq:nde:2}
\forall i\in J_1,\quad
\sum n_{\rho}\acc{y_{\rho}}{D_i}<d_i.
\end{equation}
We postpone the (easy) proof of the finiteness of $N(\bd,\be)$
and will get back to it in a minute when dealing with the
finite field case, where we will need an explicit bound for $N(\bd,\be)$.
Once we know that $N(\bd,\be)$ is finite, it is straightforward to
check that the series $R(\bd)$ converges in $\mhat$ to an element
lying in $\fil^0 \mhat$, hence the convergence of
\eqref{eq:rewriting:J1:J2}. This completes the proof 
of the fact that question \ref{ques:fin:bis}(2) has an affirmative answer for
smooth toric varieties (recall however once again that the
characteristic of the base field
has to be assumed to be zero and that we have to work
in the completed Grothendieck ring of motives).

Assuming now that the base field $k$ is a finite field with $q$ elements,
we turn to the study of 
\begin{equation}\label{eq:limhomubltanticanX:fin}
\lim_{
\substack{
y\in \ceff(X)^{\vee}\cap \Pic(X)^{\vee}
\\
\dist(y,\partial \ceff(X)^{\vee})\to +\infty
}
}
\#\HOM_{U}(\bP^1,X,y)(k)\,q^{-\acc{y}{\antican{X}}}
\end{equation}
and show that it equals the constant $c_{\text{fin}}(X)$ defined in 
\eqref{eq:def:cfinX}.
Roughly speaking, the mere thing to do is to 'specialize' the previous
proof by applying the morphism $\#_k$. Of course, to be fully rigorous, one
must be careful with convergence issues. One may check however
that the only extra needed argument  is to show the convergence of the series
\begin{equation}\label{eq:rewriting:J1:J2:bis}
\sum_{
\bd\in \bN^I
}
\abs{\#_k[\muxm(\bd)]}q^{-\abs{\bd}} 
\sum_{\be\in \bN^{J_2}}
q^{-\sum_{i\in J_2}e_i}N(\bd,\be)
\end{equation}
This is why we need an explicit bound for $N(\bd,\be)$.
Let $(n_{\rho})$ be an element of $\bN^{\delta(1)\setminus \delta(1)_{J_1,J_2}}$
satisfying \eqref{eq:nde:1} and \eqref{eq:nde:2}. For $\rho \in \delta(1)\setminus \delta(1)_{J_1,J_2}$,
there exists by definition an element $i\in J_1\cup J_2$ such that
$\acc{y_{\rho}}{D_i}\geq 1$, thus from \eqref{eq:nde:1} and
\eqref{eq:nde:2} we deduce the inequality
\begin{equation}\label{eq:maj:nrho}
n_{\rho}\leq \Max(e_i+d_i+1,d_i)\leq 
\prod_{i\in  J_1}(d_i+1)
\prod_{i\in J_2}(e_i+d_i+1)
\end{equation}
from which we infer
\begin{equation}\label{maj:accyantican}
\acc{\sum n_{\rho}y_{\rho}}{\antican{X}}
\leq 
\Supu{
\substack{\rho\in \Delta
\\
\dim(\rho)=1
}
}
(\acc{y_{\rho}}{\antican{X}}) 
(\sum_{i\in J_2} e_i+d_i+\sum_{i\in J_1} d_i).
\end{equation}
Actually, the latter inequality is not necessary for our current reasoning, but
will be used later when dealing with the anticanonical degree zeta function.

We also deduce from \eqref{eq:maj:nrho} that $N(\bd,\be)$ is finite and bounded from above by 
\begin{equation}
\prod_{i\in J_1\cup J_2}(d_i+1)^{\rk(\Pic(X))}\prod_{i\in J_2}(e_i+1)^{\rk(\Pic(X))}.
\end{equation}

But the series
$\sumu{\be\in \bN^{J_2}}\,\,\produ{i\in J_2}(e_i+1)^{\rk(\Pic(X))}q^{-\sum_{i\in
    J_2}e_i}$ is convergent, and one easily deduces from proposition
\ref{prop:mu} that the series
\begin{equation}
\sum_{
\bd\in \bN^I
}
\abs{\#_k \muxm(\bd)}
\prod_{i\in J_1\cup J_2}(d_i+1)^{\rk(\Pic(X))}
q^{-\abs{\bd}}
\end{equation}
converges too.

Now let us explain how one can deal with questions
\ref{ques:fin} and \ref{ques:mot} in case $X$ is a smooth projective
toric variety, that is, how to study the anticanonical degree zeta
functions. 
The backbone of the argument is basically the same as before.
One first writes the geometric degree zeta function as an alternating sum of
the series 
\begin{equation}
Z_{J_1,J_2}(t)
=
\frac{\bL^{\dim(X)-\# J_2}}
{(1-\bL^{-1})^{\rk{\Pic(X)}}}\,
\sum_{\bd\in \bN^I}
\muxm(\bd)
\bL^{-\sumu{i\notin J_2} d_i}
\sum_{
\substack{
y\in \Pic(X)^{\vee}\cap \ceff(X)^{\vee}
\\
\forall i\in J_1,\quad \acc{y}{D_i}< d'_i
\\
\forall i\in J_2,\quad \acc{y}{D_i}\geq d'_i
}
}
\bL^{\acc{y}{\sum_{i\notin J_2}D_i}}\,t^{y}
\end{equation}
where $(J_1,J_2)$ runs over all the pairs of subsets $(J_1,J_2)$ of $I$ such that
$J_1\cap J_2=\vide$. 
Now 
\begin{equation}
\text{sp}_{\antican{X}}Z_{\vide,\vide}(t)
=c_{\text{mot}}(X).
\text{sp}_{\antican{X}}Z(\Pic(X)^{\vee},\ceff(X)^{\vee})(\bL\,t).
\end{equation}
thus corresponding to the second
term appearing in \eqref{eq:series}, and  the terms
$\text{sp}_{\antican{X}}Z_{J_1,J_2}$ 
for $(J_1,J_2)\neq (\vide,\vide)$
must be shown to be $(\bL^{-1},\rk(\Pic(X))-1)$-controlled (but first,
strictly speaking, they must be shown to be well-defined).
But by decomposing the summation over 
$y\in \Pic(X)^{\vee}\cap \ceff(X)^{\vee}$ as an alternating sum of
summation over $y\in \delta\cap \Pic(X)^{\vee}$, where $\delta$ runs
over the cones of the regular fan $\Delta$, 
one sees easily that $Z_{J_1,J_2}$
may be written as an alternating sum of the terms
\begin{equation}\label{eq:dec:zj}
\left(
\sum_{
y\in \sumu{\rho\in  \delta(1)\setminus \delta(1)_{J_1,J_2}}\!\!\bN \,y_{\rho}
}
n_{J_1,J_2}(y)
\bL^{\acc{y}{\antican{X}}}
t^y
\right)
\prod_{\rho\in \delta(1)_{J_1,J_2}}
\frac{
1}
{1-\bL^{\acc{y_{\rho}}{\sum_{i\in J_2}D_i}}\,
\bL^{\acc{y_{\rho}}{\antican{X}}}\,t^{y_{\rho}}}.
\end{equation}
Here it is important to work with a regular cone $\delta$, to ensure that every
element of $\delta$ may be written in a unique way as the sum of 
an element of 
$\sum_{\rho\in  \delta(1)\setminus \delta(1)_{J_1,J_2}}\bN y_{\rho}$
and an element of 
$\sum_{\rho\in  \delta(1)_{J_1,J_2}}\bN y_{\rho}$.
Note that the condition $(J_1,J_2)\neq (\vide,\vide)$ implies that the
cardinality of $\delta(1)_{J_1,J_2}$ is less than $\rk(\Pic(X))$.
Thus to show that $\text{sp}_{\antican{X}}Z_{J_1,J_2}$ is 
$(\bL^{-1},\rk(\Pic(X))-1)$-controlled, it remains to show that the
series
\begin{equation}
\text{sp}_{\antican{X}}\left(
\sum_{
y\in \sumu{\rho\in  \delta(1)\setminus \delta(1)_{J_1,J_2}}\!\!\bN \,y_{\rho}
}
n_{J_1,J_2}(y)\bL^{\acc{y}{\antican{X}}}t^y\right)
\end{equation}
converges at $t=\bL^{-1}$. But the latter point is nothing else 
than the already established convergence of \eqref{eq:series:nrho}.
Thus the answer to question \ref{ques:mot} is positive for a smooth
toric variety (over a field of characteristic zero, after specialization to the Grothendieck ring of motives).

Concerning the classical anticanonical degree zeta function, we are
going to show that \eqref{eq:diff} 
\begin{equation}
Z^{\#_{k}}_U(X,\antican{X},t)-c_{\text{fin}}(X).Z(\Pic(X)^{\vee},\ceff(X)^{\vee},\class{\antican{X}},q\,t)
\end{equation}
is strongly 
$(q^{-1},\rk(\Pic(X))-1)$-controlled. 
Using the same decomposition
argument as before (formally, we just apply the morphism $\#_k$ to the
previously used decomposition of the geometric degree zeta function, though as
always there are convergence issues to be taken into account), one reduces to proving that there exists a
positive real number $\epsilon$ such that
\begin{equation}
\text{sp}_{\antican{X}}\left(
\sum_{
y\in \sumu{\rho\in \delta(1)\setminus \delta(1)_{J_1,J_2}}
\!\!\!\bN\,y_{\rho}
}
\#_k [n_{J_1,J_2}(y)]\,q^{\acc{y}{\antican{X}}}t^y
\right)
\end{equation}
converges absolutely\footnote{The reader will have of course noticed
  that strictly speaking $\#_k [n_{J_1,J_2}(y)]$ does not make sense;
 it is to be taken in a formal sense and actually designates
\[
\frac{q^{\dim(X)-\# J_2}}
{(1-q^{-1})^{\rk{\Pic(X)}}}\,
q^{-\sum_{i\in J_2}\acc{y}{D_i}}
\sum_{
\substack{\bd\in \bN^I
\\ 
\forall i\in J_1,\quad \acc{y}{D_i}< d_i
\\
\forall i\in J_2,\quad \acc{y}{D_i}\geq d_i
}}
\#_k[\muxm(\bd)]
q^{-\sumu{i\notin J_2} d_i}.
\]
}. 
For $\eta\leq 0$ this convergence follows directly from the (previously discussed)
convergence of the series
\begin{equation}\label{eq:previously:studied:series}
\sum_{
y\in \sumu{\rho\in \delta(1)\setminus \delta(1)_{J_1,J_2}}
\!\!\!\bN\,y_{\rho}
}
\#_k [n_{J_1,J_2}(y)].
\end{equation}
But now we have to show the convergence of
\begin{equation}\label{eq:currently:studied:series}
\sum_{
y\in \sumu{\rho\in \delta(1)\setminus \delta(1)_{J_1,J_2}}
\!\!\!\bN\,y_{\rho}
}
\abs{\#_k [n_{J_1,J_2}(y)]}q^{-\eta \acc{y}{\antican{X}}}
\end{equation}
for evey sufficiently small positive $\eta$.
This is here that  \eqref{maj:accyantican} is useful; 
by a reasoning analogous to the one
used to establish the convergence of 
\eqref{eq:previously:studied:series}, we see that 
\eqref{eq:currently:studied:series}
is bounded
from above by 
\begin{equation}\label{eq:expr:prod:ser}
\left(
\sum_{
\bd\in \bN^I
}
\abs{\#_k \muxm(\bd)}
\prod_{i\in J_1\cup J_2}(1+d_i)
q^{-(1-\eta\,M)\abs{\bd}} 
\right)
\left(
\sum_{\be\in \bN^{J_2}}
\prod_{i\in J_2}(1+e_i)
q^{-(1-\eta\,M)\sum_{i\in J_2}e_i}
\right)
\end{equation}
where we have set 
$M\eqdef \Supu{\substack{\rho\in \Delta
\\
\dim(\rho)=1
}
}
\acc{y_{\rho}}{\antican{X}}
$; and the two series appearing in \eqref{eq:expr:prod:ser} are obviously convergent for $\eta$
sufficiently small (again, we use the properties of the M\"obius
function $\mux$ described in proposition \ref{prop:mu}).

To finish the section, we are going to show on an example 
why one could not expect for a general toric variety the existence of
the limits
\begin{equation}
\lim_{
\substack{
~
\\
y\in \Pic(X)^{\vee}\cap \ceff(X)^{\vee}
\\
~
\\
\acc{y}{\antican{X}}\to +\infty
}
}
\class{\HOM_{U}(\bP^1,X,y)}\,\bL^{-\acc{y}{\antican{X}}}
\end{equation}
and (when $k$ is a finite field with $q$ elements)
\begin{equation}
\lim_{
\substack{
~
\\
y\in \Pic(X)^{\vee}\cap \ceff(X)^{\vee}
\\
~
\\
\acc{y}{\antican{X}}\to +\infty
}
}
\#\HOM_{U}(\bP^1,X,y)(k)\,q^{-\acc{y}{\antican{X}}}.
\end{equation}

We take for $X$ the projective plane $\bP^2$ blown-up at $(0:0:1)$. 
We denote by $D_0$, $D_1$, $D_2$ the strict transform of the
coordinate hyperplane and by $E$ the exceptional divisor. A toric
structure on $X$, as well as the corresponding fan, were described in  
section \ref{subsec:tor:gemo}.
We denote by $(D_0^{\vee},E^{\vee})$ the dual basis of the basis
$(D_0,E)$ of $\Pic(X)$ and use it to identify $\Pic(X)^{\vee}$ with
$\bZ^2$. The coordinate of $y\in \Pic(X)^{\vee}$ in this basis will be
denoted by $(y_0,y_E)$.

A very pleasing feature of $X$ is that the
M\"obius fonction $\muxm$ is explicitely computable: let us define the
function $\mum\,:\,\bN\to \kovark$ by the relation 
\begin{equation}
\sum \mum(d)t^d=\frac{1}{\ZHWm(\bP^1,t)}.
\end{equation}
Thus one immediatly computes
\begin{equation}\label{eq:expr:mum}
\mum(0)=1,\quad \mum(1)=-(1+\bL),\quad \mum(2)=\bL,
\quad 
\forall d\geq 3,\quad \mum(d)=0.
\end{equation}
Moreover one shows (see \cite{Bou:prod:eul:mot}) 
\begin{multline}
\forall (d_0,d_1,d_2,d_E)\in \bN^4,\quad 
\\
\muxm(d_0,d_1,d_2,d_E)
=
\left\{
\begin{array}{rl}
0&\text{ if }d_0\neq d_1\text{ or }d_2\neq d_E\\
\mum(d_0)\mum(d_E)&\text{ otherwise.}
\end{array}
\right.
\end{multline}
From this we see that for 
$y=(y_0,y_E)\in\Pic(X)^{\vee}\cap \ceff(X)^{\vee}$ 
the quantity 
\begin{equation}
\bL^{-\acc{y}{\antican{X}}}\class{\HOM_{U}(\bP^1,X,y)}
\end{equation}
(recall the expression \eqref{eq:expr:for:L^yanticanX})
may be rewritten as
\begin{multline}\label{eq:form:p2:blownup:1:point}
\frac{\bL^{2}}{(1-\bL^{-1})^{2}}
\sum_{
\substack{
0\leq  d_E \leq \Min(2,y_E)
\\
0\leq  d_0 \leq \Min(2,y_0)
}
}
\mum(d_0)\mum(d_E)\,\bL^{-2\,d_E-2\,d_1}
(1-\bL^{-1+d_E-y_E})
\\
\times
(1-\bL^{-1+d_E-y_E-y_0})(1-\bL^{-1+d_0-y_0})^2.
\end{multline}
thus allowing, using \eqref{eq:expr:mum}, to give a completely
explicit expression of 
\begin{equation}
(1-\bL^{-1})^2\,\bL^{-\acc{y}{\antican{X}}}\class{\HOM_{U}(\bP^1,X,y)}
\end{equation}
as an element of $\bZ[\bL^{-1}]$.

We have
\begin{equation}
c_{\text{mot}}(X)=\frac{\bL^{2}}{(1-\bL^{-1})^{2}}\left(\sum_{d\in \bN}\mum(d)\bL^{-2d}\right)^2
=\bL^2(1-\bL^{-2})^2
\end{equation}
but one easily checks using the above expression that 
\begin{equation}
\lim_{n\to +\infty}
\bL^{-\acc{n\,E^{\vee}}{\antican{X}}}\class{\HOM_{U}(\bP^1,X,n\,E^{\vee})}
=
\bL^{2}(1-\bL^{-1})(1-\bL^{-2})\neq c_{\text{mot}}(X)
\end{equation}
(for the inequality, see remark \ref{rem:L:trans}).

If $k$ is a finite field with $q$ elements, one checks similarly that
\begin{equation}
\lim_{n\to +\infty}
q^{-\acc{n\,E^{\vee}}{\antican{X}}}\# \HOM_{U}(\bP^1,X,n\,E^{\vee})(k)
=
q^{2}(1-q^{-1})(1-q^{-2})\neq c_{\text{fin}}(X)=q^2(1-q^{-2})^2
\end{equation}
Note however that one can show, using again \eqref{eq:form:p2:blownup:1:point}
that one has
\begin{equation}
\lim_{
\substack{
~
\\
y\in \Pic(X)^{\vee}\cap \ceff(X)^{\vee}
\\
~
\\
\acc{y}{\antican{X}}\to +\infty
\\
\acc{y}{E}\geq 2
\\
\acc{y}{D_0}\geq 2
}
}
\class{\HOM_{U}(\bP^1,X,y)}\,\bL^{-\acc{y}{\antican{X}}}=c_{\text{mot}}(X)
\end{equation}
and the analogous statement if $k$ is a finite field.

\section{The general case}

In this section, we want to explain how the use of homogeneous
coordinates in the study of the degree zeta function of a 
smooth projective toric variety might be generalized to other
varieties. First of all of course we have to explain 
the  notion of homogeneous coordinate rings for a non toric variety.
Motivated by the work of Cox in the toric case, 
it has been intensively studied during
the last ten years. The terms \termin{Cox rings} 
or \termin{total coordinate rings} are
often found in the literature to designate homogeneous coordinate
rings\footnote{Though `Cox ring' is probably the most commonly used,
  I will stick to the terminology `homogeneous coordinate ring'
  which I find more appealing, even though there might be confusion
  with the homogeneous coordinate ring associated to one particular
  projective embedding. Note that what is called an homogeneous
  coordinate ring in \cite{BerHau:hom} is in fact the ring we discuss
  here equipped with an extra structure}.
The topic is tightly connected with the so-called notion of universal torsors,
introduced by Colliot-Thélène and Sansuc in the 1970's in order to
study weak approximation and Hasse principle on rational varieties (see \eg \cite{CTS:desc,Sko:book}).
One owes to Salberger the idea of using universal torsors in the context of Manin's conjecture
on rational points of bounded height. He showed in \cite{Sal:tammes} that this approach
was indeed fructuous for toric varieties (defined over $\bQ$) and 
the first non toric example of a succesful application of the method
is due to de la Bretèche (\cite{dlB:duke}).
Since then, the use of universal torsors/homogeneous coordinate rings has allowed to settle
the arithmetic version of Manin's conjectures for a certain number of non toric varieties
(especially in dimension $2$), see \eg \cite{Bro:manin_dim_2}.

In the arithmetic setting, the use of homogeneous coordinate rings reduces the
counting of rational points of bounded height to the counting of integral
points of an affine space satisfying certain algebraic relations, coprimality conditions
and norm inequalities.
In the geometric setting, we will explain below how it similarly reduces the
counting of morphism $\courbe\to X$ of bounded degree to the counting of global
sections of line bundles of $\courbe$ satisfying certain algebraic relations,
non degeneracy conditions, and degree conditions. This will generalize
the case of a toric variety $X$, for which there are indeed
{\em no} algebraic relations. For the sake of simplicity we will
limit ourselves to the case $\courbe=\bP^1$.

For more about homogeneous coordinate rings and examples of
computations, see \eg \cite{BerHau:hom,BerHau:Cox,Bri:TCR,Has:eq:ut:cox:rings,HaTs:UTCR}.

\subsection{A brief survey of the theory homogeneous coordinate rings}

Let $k$ be a perfect field and $X$ be a smooth projective variety. 
We hereby assume that the Picard group of $X$ coincides with its
geometric Picard group and that it is free of finite rank 
(the theory of homogeneous coordinate rings can be developed in 
a more general context, see \eg \cite{ElKaWa,BerHau:hom}).

Very roughly, the idea behind the theory of homogeneous coordinate
rings is that instead of working with a particular choice of
coordinates coming from a morphism from $X$ to a projective space, which
in turn corresponds to a subspace of the space of global sections of a
particular invertible sheaf on $X$, we could as well work
with the spaces of global sections
of all the invertible sheaves on $X$ considered simultaneously. 

Let $\cL_1,\dots,\cL_r$ be a basis of $\Pic(X)$. 
We define the
homogeneous coordinate ring of $X$ by 
\begin{equation}
\HCR(X)
\eqdef 
\bigoplusu{\bn\in \bZ^r}
H^0(X,\cL_1^{n_1}\otimes\dots \otimes \cL_r^{n_r}).
\end{equation}
This is a $k$-algebra naturally graded by $\Pic(X)$: 
just impose that $H^0(X,\cL_1^{n_1}\otimes\dots \otimes \cL_r^{n_r})$
is homogeneous of degree the class of $\cL_1^{n_1}\otimes\dots \otimes \cL_r^{n_r}$.
The degree of the nonzero graded pieces are precisely the effective
classes in $\Pic(X)$.
The definition depends of course on a particular choice of a basis of $\Pic(X)$.
Nevertheless, one can easily show that two different choices give
rise to isomorphic $\Pic(X)$-graded $k$-algebras.
\begin{ex}
Let $n\geq 4$ be an integer and $X\subset \bP^n_k$ be a smooth
projective hypersurface of degree $d\leq n+1$; then $\HCR(X)$ is the
homogeneous coordinate ring of $X$ in the classical sense, that is,
the affine coordinate ring of the cone over $X$ in $\bA^{n+1}_k$.
\end{ex}
\begin{ex}[Cox]
Let $X$ be a smooth toric variety and let
$\{D_i\}_{i\in I}$ be the irreducible divisors of the boundary. For
$i\in I$ let $s_i$ be the canonical section of $\str{X}(D_i)$. Then
the $s_i$'s generate $\HCR(X)$, and there are no nontrivial relation
between them, thus $\HCR(X)$ is a polynomial ring in $\#I$ variables
in this case (this is essentially the content of remark \ref{rem:hcr:toric}).
\end{ex}
\begin{ex}[Hasset]\label{ex:p2blownup3pts}
Let $X$ be the projective plane blown up at three collinear points, $D_0$ be the strict transform of
the line $L$ joining the points, $D_1$, $D_2$ and $D_3$ the exceptional
divisors and $D_4$, $D_5$, and $D_6$ the strict transform of the lines
joining a point not lying on $L$ to the blown up points. Let $s_i$ be the
canonical section of $\str{X}(D_i)$. Then one can show that the $s_i$
generate $\HCR(X)$, and that the kernel of the morphism $k[X_i]\to
\HCR(X)$ mapping $X_i$ to $s_i$ is generated (after a suitable
normalization of the $s_i$'s) by $X_1\,X_4+X_2\,X_5+X_3\,X_6$
(see \cite{Has:eq:ut:cox:rings} and \cite{Der:sdp:ut:hyp}).
\end{ex}
\begin{ex}[Skorobogatov]
Let $X$ be the projective plane blown up at four points $(P_i)_{1\leq
  i\leq 4}$ in general position; 
then $\HCR(X)$ may be identified with the
homogeneous coordinate rings of the Plücker embedding of the
Grassmannian variety $Gr(3,5)$ in $\bP(\Lambda^3 k^5)\isom \bP^{10}_k$.
More explicitely,
let $(E_i)_{1\leq i\leq 4}$ be the
exceptional divisors and $(L_{i,j})_{1\leq i<j\leq 4}$ be the strict
transform of the lines joining the $P_i$'s; let $z_{i,5}$ be the
canonical section of $E_i$ and $z_{i,j}$ be the canonical section of
$L_{i,j}$; then the morphism $k[X_{i,j}]\to \HCR(X)$ mapping
$X_{i,j}$ to $s_{i,j}$ is surjective with kernel generated by
the five elements
\begin{align}
X_{1,2}X_{3,4}-X_{1,3}X_{2,4}+X_{1,4}X_{2,3},\\
X_{1,2}X_{3,5}-X_{1,3}X_{2,5}+X_{1,5}X_{2,3},\\
X_{1,2}X_{4,5}-X_{1,4}X_{2,5}+X_{1,5}X_{2,4},\\
X_{1,3}X_{4,5}-X_{1,4}X_{3,5}+X_{1,5}X_{3,4},\\
\text{and}\quad X_{2,3}X_{4,5}-X_{2,4}X_{3,5}+X_{2,5}X_{3,4}.
\end{align}
\end{ex}
\begin{ex}[Batyrev, Derenthal, Laface, Popov, Stillman, Sturmfels, Testa, Varilly-Alvarado, Velasco, Xu]
Let $1\leq r\leq 4$ be an integer an $X_r$ be a smooth del Pezzo surface of degree $r$;
recall that it is isomorphic to the projective plane blown up at $9-r$
points in general position. Then $\HCR(X_r)$ is generated by the
sections of the $(-1)$-curves, and the ideal of relations is generated
by quadratic relations\footnote{Note that for $6\leq r\leq 9$, $X_r$
  is toric and in the case $r=5$ we have a similar result by the previous example}.
\end{ex}

In all the above examples, the homogeneous coordinate ring happens
to be finitely generated. 
The relevance of the property of finite
generation of the homogeneous coordinate ring was stressed by Hu and
Keel in the context of Mori theory. In \cite{hukeel:mori}, they call
those varieties with finitely
generated homogeneous coordinate rings {\em Mori dream spaces},
showing in particular that they behave very well with respect to the
minimal model program.

The question of deciding whether the homogeneous coordinate ring of a
variety is finitely generated is difficult. 
A recent and very deep result of Birkar, Cascini, Hacon and McKernan is that the
homogeneous coordinate ring of a Fano variety is finitely generated (\cite[Corollary 1.3.2.]{BCHM}).
On a surface, it is easy to show that a necessary condition for finite
generation is that there are only finitely many curves with negative self-intersection.

Another difficult issue is to compute explicitely
generators and relations for the homogeneous coordinate ring. 
Such an explicit expression is a priori required for applications in the
context of Manin's conjectures.

\subsection{Homogeneous coordinate rings and universal torsors}

In the following, we will denote by $X$ a smooth projective variety defined over a perfect field
$k$ such that the Picard group is free of finite rank, coincide with
the geometric Picard group, and such that $\HCR(X)$ is generated by 
a finite number of sections invariant under the action of the absolute
Galois group (the reader may assume that $k$ is algebraically closed
if he likes). Under these assumptions, one can construct a
$\TNS{X}$-torsor over $X$ with properties generalizing the one of the
torsor constructed in subsection \ref{subsec:hom:cor:tor} when $X$ is
toric (recall that $\TNS{X}=\Hom(\Pic(X),\bG_m)\isom \bG_m^{\rk(\Pic(X))}$).

A first version of the result is due to Hu and Keel.
\begin{thm}[Hu,Keel]\label{thm:hukeel}
Let $D$ be an ample class in $\Pic(X)$. It corresponds to a character
of $\TNS{X}$, hence to a $\TNS{X}$-linearization of the trivial bundle
on $\Spec(\HCR(X))$. The GIT quotient of the open set 
$\Spec(\HCR(X))^{\text{ss}}$
of semi-stable points by the action of $\TNS{X}$ is a geometric
quotient isomorphic to $X$.
\end{thm}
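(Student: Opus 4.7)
The plan is to invoke the standard GIT machinery for Proj-type quotients of graded rings. Write $R \eqdef \HCR(X)$. The $\Pic(X)$-grading of $R$ is equivalent to an action of $\TNS{X} = \Spec(k[\Pic(X)])$ on $\Spec(R)$, each homogeneous piece $R_{[\cL]}$ being the $\chi_{[\cL]}$-eigenspace; the ample class $D$ yields a character $\chi_D : \TNS{X} \to \bG_m$ and hence a linearization of the trivial line bundle on $\Spec(R)$. By Mumford's construction, the semistable locus is the union, over $n \geq 1$ and $s \in R_{nD}$, of the principal open sets $\{s \neq 0\}$, and the categorical GIT quotient is
\[
\Spec(R)^{\text{ss}} // \TNS{X} = \Proj\Bigl(\bigoplusu{n \geq 0} R_{nD}\Bigr) = \Proj\Bigl(\bigoplusu{n \geq 0} H^0(X, \str{X}(nD))\Bigr),
\]
the last equality being the very definition of the $\Pic(X)$-grading on $\HCR(X)$. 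Since $D$ is ample, the latter $\Proj$ is canonically isomorphic to $X$ by a standard result of projective algebraic geometry.

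The next step is to upgrade this categorical quotient to a geometric one. I would proceed locally: given $x \in X$, pick $n$ large and a section $s \in H^0(X, \str{X}(nD))$ not vanishing at $x$. The affine open $X_s \eqdef \{s \neq 0\} \subset X$ has coordinate ring $R_{(s)}$, the degree zero subring of the localization $R_s$. Inverting $s$ trivializes the line bundle $\str{X}(nD)$ on $X_s$, and through this trivialization $R_s$ becomes a $\Pic(X)$-graded Laurent polynomial algebra over $R_{(s)}$. This exhibits $\{s \neq 0\} \subset \Spec(R)$ as a trivial $\TNS{X}$-torsor over $X_s$, in particular a geometric quotient. Gluing these local pieces yields a global $\TNS{X}$-torsor $\Spec(R)^{\text{ss}} \to X$, which is a geometric quotient generalizing the toric construction $\tors_X \to X$ of subsection \ref{subsec:hom:cor:tor}.

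The main obstacle will be to make the local trivializations canonical enough to glue coherently, and equivalently, to verify that semistable equals stable for this linearization (every $\TNS{X}$-orbit in $\Spec(R)^{\text{ss}}$ should be closed and have trivial stabilizer). The key inputs are finite generation of $\HCR(X)$, which ensures that $\Spec(R)$ is noetherian and the GIT machinery applies directly, together with the ampleness of $D$: the latter implies that, for $n$ large, the sections of $nD$ separate points on $X$, and therefore that the $\TNS{X}$-action on $\Spec(R)^{\text{ss}}$ is set-theoretically free, the stabilizer of any semistable point being trivial. Combined with the explicit local trivializations described above, this yields the desired geometricity of the quotient.
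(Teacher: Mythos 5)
The paper does not actually prove this theorem; it explicitly defers to \cite[Proposition 2.9]{hukeel:mori} and then pursues a ``GIT-free'' route via the Hassett--Tschinkel description of the universal torsor (Theorem \ref{thm:hass:tsc}). Your proposal instead runs the GIT machinery directly, which is the legitimate Hu--Keel route, and the first half — identifying the $\Pic(X)$-grading with the $\TNS{X}$-action, recognizing $R_{nD}$ as the $\chi_D^n$-semi-invariants, hence $\Spec(\HCR(X))^{\text{ss}}=\bigcup_{n\geq 1,\,s\in R_{nD}}\{s\neq 0\}$ and $\Spec(\HCR(X))^{\text{ss}}/\!\!/\TNS{X}=\Proj\bigl(\bigoplus_{n\geq 0}H^0(X,\str{X}(nD))\bigr)\cong X$ — is correct and standard.

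The second half, upgrading to a geometric quotient, has two real gaps. First, the local triviality claim is false as stated: inverting a homogeneous $s\in R_{nD}$ only produces an invertible homogeneous element of degree $nD$, so $R_s$ need only be a Laurent algebra over $R_{(s)}$ with respect to the cyclic subgroup $\bZ\cdot nD\subset\Pic(X)$, not the whole group. In particular $\{s\neq 0\}\to X_s$ need not be a \emph{trivial} $\TNS{X}$-torsor — think of an irreducible $s$ whose only homogeneous divisors have degree in $\bZ\cdot nD$. To salvage this one must choose $s$ more carefully (e.g.\ as a product of irreducible generators $s_i$ not vanishing at the given point whose classes $[D_i]$ generate $\Pic(X)$), or cover $X$ by opens that simultaneously trivialize a whole basis $\cL_1,\dots,\cL_r$ of $\Pic(X)$ rather than by the $X_s$. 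Second, the argument ``ampleness $\Rightarrow$ sections of $nD$ separate points $\Rightarrow$ the $\TNS{X}$-action is free'' does not follow: separating points of $X$ says nothing about stabilizers of points of $\Spec(\HCR(X))^{\text{ss}}$. The stabilizer of $p$ is the intersection $\bigcap\ker\chi_{[\cL]}$ over effective classes $[\cL]$ possessing a section that is nonzero at $p$; its triviality requires knowing that these classes generate $\Pic(X)$ (which uses that every class is a difference of two globally generated ones on a projective $X$), an argument you would need to supply and that is unrelated to point separation. Both conclusions are true and recoverable, but as written the step from categorical to geometric quotient does not go through.
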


We refer to \cite[Proposition 2.9]{hukeel:mori} for a proof of this theorem.
We will not review here the tools of Geometric Invariant Theory
necessary to understand the statement and its proof (see \eg
\cite{MFK:GIT,Dol:GIT}). But following Hasset and Tschinkel, we are
going to explain, 
by a GIT-free approach,
why the geometric quotient of theorem \ref{thm:hukeel} is the so-called
universal torsor over $X$.
First we will review some basic properties of torsors under algebraic
tori.

\subsubsection{Torsors under split algebraic tori}
We still restrict ourselves to the case of split tori, which
allows us to work only with the crude Zariski topology (otherwise, finer
Grothendieck topologies, \eg étale topology, would be needed). So let $T$ be a
split algebraic torus and $X$ a variety. The trivial $X$-torsor under $T$
is the $T$-equivariant morphism $\pr_X\,:\,X\times T\to X$ where $T$
acts trivially on $X$ and by translation on itself.
An $X$-torsor under $T$ is the datum of a variety 
$\ecT$ equipped with an algebraic action of $T$ 
and a morphism $\pi\,:\,\ecT\to X$ which is locally isomorphic to the trivial
torsor, that is to say 
there exists a finite open covering $(U_i)_{i\in I}$ of $X$ 
and $T$-equivariant $X$-isomorphisms 
$\psi_i\,:\,\pi^{-1}(U_i)\to U_i\times T$. 
By abuse of terminology, 
we will often say that the variety $\ecT$ is an
$X$-torsor under $T$.

For $i,j\in I$, the morphism
\begin{equation}
(\psi_j\circ \psi_i^{-1})_{|(U_i\cap U_j)\times T}:\,U_i\cap U_j\times
T\to U_i\cap U_j\times T
\end{equation}
induces a morphism $\lambda_{i,j}\,:\,U_i\cap U_j\to T$, that is, an
element of $T(U_i\cap U_j)$. 
Recall that the latter has a natural group
structure, for which it is isomorphic to
$\left(\Gamma(U_i\cap U_j)^{\inv}\right)^{\dim(T)}$. 
It is
straightforward to check that the $\{\lambda_{i,j}\}_{i,j\in I}$
satisfy the cocycle conditions, that is $\lambda_{j,k}\lambda_{i,j}=\lambda_{i,j}$
and $\lambda_{i,i}=1$.

Conversely, the datum of a finite open covering $\{U_i\}_{i\in I}$ of
$X$ and a family $\{\lambda_{i,j}\in T(U_i\cap U_j)\}_{i,j\in I}$ satisfying the cocycle conditions
determines an $X$-torsor under $T$: just glue the trivial torsors $U_i\times
T\to U_i$ along the $(U_i\cap U_j)\times T\to U_i\cap U_j$ using the $\lambda_{i,j}$ as transition morphisms. 

Two $X$-torsors under $T$ are said to be isomorphic if there exists a
$T$-equivariant $X$-isomorphism between them.
Denote by $H^1(X,T)$ the set of isomorphism classes of $X$-torsors
under $T$. It is naturally equipped with an abelian group structure:
if two torsors are represented by cocycles $(\{U_i\},\{\lambda_{i,j}\})$
and $(\{U_i\},\{\lambda'_{i,j}\})$ respectively, the class
of their 
product is represented by the cocycle $(\{U_i\},\{\lambda_{i,j}\lambda'_{i,j}\})$.
The unit element is the class of the trivial torsor.

\subsubsection{Torsors under $\bG_m$}\label{subsec:torsors:under:Gm}

In case $T=\bG_m$, the datum of an isomorphism class of
cocycle  $(\{U_i\},\{\lambda_{i,j}\in \Gamma(U_i\cap U_j)^{\inv}\})$ is
equivalent to the datum of an isomorphism class of invertible sheaf on
$X$; in other words we have a natural bijection $H^1(X,\bG_m)\isom
\Pic(X)$ which is clearly seen to be a group isomorphism. 
If $\cT\to X$ is a torsor under $\bG_m$ the corresponding
class of $\Pic(X)$ will be called the type of the torsor (\cf below
for a generalization). 
The pull-back of a torsor under $\bG_m$ of type $\cL$ by a morphism
$\varphi\,:\,Y\to X$ is easily seen to be a $Y$-torsor under $\bG_m$
of type $\varphi^{\ast}\cL$.

Let $\cL$ be an invertible sheaf on $X$ and
$V(\cL)\eqdef \SPEC(\oplusu{n\in \bN}\cL^{n})$.
The natural affine morphism $V(\cL)\to X$ is a line bundle on $X$; we
denote by $\mathbf{0}_{V(\cL)}$ its zero section.
Then a representant of the class of  $X$-torsors under $\bG_m$ of type $\cL$
is the morphism $V(\cL)^{\inv}\eqdef V(\cL)\setminus \mathbf{0}_{V(\cL)}\to X$; note
that $V(\cL)^{\inv}$ is naturally isomorphic
to $\SPEC(\oplusu{n\in \bZ}\cL^{n})$.

When $\cL$ is ample,
we explain now how to construct a torsor under $\bG_m$ with type
$\cL$ as an open subset of an affine variety.
We consider the $\bZ$-graded $k$-algebras
\begin{equation}
R(X,\cL)=\oplusu{n\in \bN}H^0(X,\cL^{  n})
\end{equation}
(which is finitely generated since $\cL$ is ample)
and the associated affine scheme
\begin{equation}
\cC(X,\cL)\eqdef \Spec(R(X,\cL)).
\end{equation}
We denote by $0_{X,\cL}$ the closed point of $\cC(X,\cL)$
defined by the ideal 
\begin{equation}
R(X,\cL)^+\eqdef \oplusu{n\geq 1}H^0(X,\cL^{ n}).
\end{equation}
There is a natural morphism
\begin{equation}
\pi_{\cL}\,:\,V(\cL)=\SPEC(\oplusu{n\in \bN}\cL^{ n})\to \cC(X,\cL).
\end{equation}
Since $\cL$ is ample, one checks, using that $\cL^n$ is
generated by global sections for $n$ large enough that the set
theoretic inverse image of
$0_{X,\cL}$ is the zero section $\mathbf{0}_{V(\cL)}$.
Hence $\pi_{\cL}$ induces a morphism
\begin{equation}
\pi'_{\cL}\,
:\,
V(\cL)^{\inv}
\to
\cC(X,\cL)\setminus 0_{X,\cL}.
\end{equation}
Assume moreover that $\cL$ is very ample. 
Then $\pi'_{\cL}$ is an
isomorphism. This may be seen by using the cartesian diagram
\begin{equation}
\xymatrix{
\cC(X,\cL)\setminus 0_{X,\cL}
\ar@{^{(}->}[r]\ar[d]
&
\bA(X,\cL)\setminus \{0\}
\ar[d]
\\
X\isom \Proj(R(X,\cL))
\ar@{^{(}->}[r]^>>>>>>>>{\iota}&\bP(X,\cL)
}
\end{equation}
where $\bP(X,\cL)$
(respectively $\bA(X,\cL)$) denotes the $\Proj$ (respectively the
$\Spec$ of the symmetric algebra) of $H^0(X,\cL)$, 
and both horizontal
arrows are closed immersions.
The left vertical arrow
is the pullback 
of the $\bG_m$-torsor 
$\bA(X,\cL)\setminus \{0\}\to \bP(X,\cL)$ 
which is of type $\str{\bP(X,\cL)}(1)$: thus its type is
$\iota^{\ast}\str{\bP(X,\cL)}(1)=\cL$.

Moreover $\pi'_{\cL}$
is still an isomorphism when $\cL$ is only assumed to be ample. Indeed, let $d\geq 1$
such that $\cL^{\otimes d}$ is very ample. 
We have a commutative diagram
\begin{equation}
\xymatrix{
\SPEC(\oplusu{n\in \bZ}\cL^{n})
\ar[r]
\ar[d]^{\pi'_{\cL}}
&
\SPEC(\oplusu{n\in \bZ}\cL^{d\,n})
\ar[d]^{\pi'_{\cL^{d}}}
\\
\cC(X,\cL)\setminus 0_{X,\cL}
\ar[r] ^{\iota}&
\cC(X,\cL^{d})\setminus 0_{X,\cL^d}
}
\end{equation}
The upper horizontal arrow is induced by the inclusion of
$\str{X}$-algebra 
$\oplusu{n\in \bZ}\cL^{d\,n}\subset \oplusu{n\in \bZ}\cL^{n}$
and is thus a finite morphism.
Since $\pi'_{\cL^{d}}$ is an isomorphism, $\pi'_{\cL}$ is finite,
hence affine. But by the very definition of the affine scheme
$\cC(X,\cL)$, one has
$(\pi_{\cL})_{\ast}\,\str{V(\cL)}=\str{\cC(X,\cL)}$, 
hence $(\pi'_{\cL})_{\ast}\,\str{V(\cL)^{\inv}}=\str{\cC(X,\cL)\setminus 0_{X,\cL}}$, 
and $\pi'_{\cL}$ is an isomorphism.

\subsubsection{Type and universal torsors}

Let $T$ be a split algebraic torus, $\pi=(\{U_i\},\{\lambda_{i,j}\})$ an
$X$-torsor under $T$, and
$\varphi\,:\,T\to T'$ a morphism of algebraic torus. Then 
$(\{U_i\},\{\varphi(\lambda_{i,j})\})$ is an $X$-torsor under $T'$,
denoted by $\varphi_{\ast}\pi$.

To an (isomorphism class) of $X$-torsor under $T$ one associates its
type $\tau(\cT)$, 
which is an element of $\Hom(\carac{T},\Pic(X))$ defined as follows:  let $\chi\in \carac{T}$; then
$\chi_{\ast}\cT$ is an $X$-torsor under $\bG_m$, hence determinates a
class in $\Pic(X)$, which is by definition $\tau(\cT)(\chi)$. It is
easy to check that the map $\cT\to \tau(\cT)$ induces 
an isomorphism $H^1(X,T)\isom \Hom(\carac{T},\Pic(X))$ (using the fact
that $T$ is isomorphic to $\bG_m^r$, one reduces to the case $T=\bG_m$).

Now assume that $\Pic(X)$ is free of finite rank (with a trivial
Galois action). A universal $X$-torsor is an $X$-torsor under
$\TNS{X}$ whose type is $\Id_{\Pic(X)}\in \End(\Pic(X))$. 
Note that there is only one isomorphism class of universal torsors
over $X$. 

Let $\pi\,:\,\ecT\to X$ be a universal torsor.
Being given an arbitrary torus $T$ and 
a torsor $\pi'\,:\,\ecT\to X$ under $T$, one
sees immediatly that there exists a unique morphism of algebraic group
$\varphi\,:\,\TNS{X}\to T$ such that $\varphi_{\ast}\ecT$ 
and $\ecT'$ are isomorphic: $\varphi$ is the dual morphism of $\tau(\cT')\in \Hom(\carac{T},\Pic(X))$.
Thus, every $X$-torsor under a torus can be recovered from a universal
torsor
and, in some sense, universal torsors are the most interesting torsors
among the $X$-torsors under tori, those which are maximal in terms of
complexity; hence lifting objects from $X$ to a universal torsor
should reveal itself interesting.

Choose a basis $\cL_1,\dots,\cL_r$ of $\Pic(X)$. Let $\cL_{\ell}$ be
described by the cocycle $(\{U_i\},\{\lambda^\ell_{i,j}\})$.
A representant of the class of universal torsors may be described,
according to taste,
as  
\begin{equation}
V(\cL_1)^{\inv}\times_X \dots\times_X V(\cL_r)^{\inv}\to X,
\end{equation}
\begin{equation}
\SPEC\left(
\bigoplus_{\bn\in \bZ^r}
\cL_1^{n_1}\otimes \dots \otimes
\cL_r^{n_r}\right)
\to X
\end{equation}
or
\begin{equation}
(\{U_i\},\{(\lambda^1_{i,j},\dots,\lambda^r_{i,j})\}).
\end{equation}
\subsubsection{Universal torsors and homogeneous coordinate rings}
We explain why the universal torsor embeds naturally as an open
subset of the affine scheme $\Spec(\HCR(X))$. 
We begin with a simple remark: if  $\cL$, $\cL'$ are two ample
classes, the ideals of $\HCR(X)$ generated by $R(X,\cL)^+$ and 
$R(X,\cL')^+$ respectively have the same radical. Indeed,
since the ample cone is open, one can find a very ample $\ecM$ and
positive integers $n$ and $m$ such that $(\cL')^m\otimes \ecM=\cL^n$ and
$(\cL')^m$ is very ample. This shows that for every $s\in
H^0(X,\cL)$, $s^n$ is in the ideal generated by $R(X,\cL')^+$.

The irrelevant ideal $\Irr(X)$ of $\HCR(X)$ is by definition the radical of the
ideal generated by $R(X,\cL)^+$, for $\cL$ an ample class.
\begin{thm}[Hassett-Tschinkel]\label{thm:hass:tsc}
There is a natural $\TNS{X}$-equivariant morphism $\tors_X\to \Spec(\HCR(X))$
which induces an isomorphism 
\begin{equation}
\tors_X \isom \Spec(\HCR(X)) \setminus \scZ(\Irr(X)).
\end{equation}
\end{thm}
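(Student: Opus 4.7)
The plan is to use the description $\tors_X = \SPEC_X\!\left(\bigoplus_{\bn\in \bZ^r} \cL_1^{n_1}\otimes\dots\otimes \cL_r^{n_r}\right)$ of a universal torsor from the preceding paragraph. Writing $\cL^{\bn}\eqdef \cL_1^{n_1}\otimes\dots\otimes \cL_r^{n_r}$, the structure morphism $\tors_X\to X$ is affine, so $\Gamma(\tors_X,\str{\tors_X})=\Gamma\!\left(X,\bigoplusu{\bn}\cL^{\bn}\right)=\HCR(X)$ canonically. Applying $\Spec$ to the identity map on this ring yields a canonical morphism $\pi\,:\,\tors_X\to \Spec(\HCR(X))$. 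The $\TNS{X}$-equivariance is automatic: the $\Pic(X)\isom \bZ^r$ grading on $\bigoplusu{\bn}\cL^{\bn}$ induces both the $\TNS{X}$-action on $\tors_X$ and, via the corresponding grading on $\HCR(X)$, the $\TNS{X}$-action on $\Spec(\HCR(X))$, and the morphism $\pi$ respects the gradings by construction.

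Next I would check that the image of $\pi$ avoids $\scZ(\Irr(X))$. Given $p\in \tors_X$ lying over $x\in X$, one picks an ample class $\cL$; since $\cL^{\,N}$ is globally generated for $N$ large, there exists $s\in H^0(X,\cL^{\,N})\subset R(X,\cL)^+$ with $s(x)\neq 0$. Read as a function on $\tors_X$ via the map $\HCR(X)\to \Gamma(\tors_X,\str{\tors_X})$, the section $s$ is exactly the pullback of $s$ under $\tors_X\to X$ (after choosing local trivializations of the $\cL_{\ell}$'s, $s$ becomes an $\str{X}$-function on $X$ times a monomial in the trivializing sections, hence does not vanish above $x$). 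So $\pi(p)\in D(s)\subset \Spec(\HCR(X))\setminus \scZ(\Irr(X))$. This also shows that the open sets $D(s)$ with $s$ homogeneous of ample degree form a covering of $\Spec(\HCR(X))\setminus \scZ(\Irr(X))$, because $\Irr(X)$ is by definition the radical of the ideal generated by such sections.

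To show $\pi$ induces an isomorphism onto this open subscheme, I would work locally over each such $D(s)$. Since $s$ is ample, $X_s\eqdef \{x\in X\,:\,s(x)\neq 0\}$ is an affine open of $X$, and by the same local-triviality computation as above one has $\pi^{-1}(D(s))=\tors_X|_{X_s}$. Because $X_s$ is affine,
\begin{equation*}
\tors_X|_{X_s}=\Spec\!\left(\bigoplusu{\bn}H^0(X_s,\cL^{\bn}|_{X_s})\right),
\end{equation*}
while $D(s)=\Spec(\HCR(X)[s^{-1}])$. The natural restriction map $\HCR(X)\to \bigoplusu{\bn}H^0(X_s,\cL^{\bn}|_{X_s})$ sends $s$ to an invertible element (it trivializes $\cL^{N}|_{X_s}$), so it factors through $\HCR(X)[s^{-1}]$, and the resulting graded map is seen to be an isomorphism on each graded piece by the standard direct-limit identification
\begin{equation*}
\varinjlim_k H^0(X,\cL^{\bn}\otimes \cL^{\,kN})\;\overset{\sim}{\longto}\;H^0(X_s,\cL^{\bn}|_{X_s}),
\end{equation*}
valid for any coherent sheaf and $\cL$ ample. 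Glueing these local isomorphisms $\tors_X|_{X_s}\isom D(s)$ (which are compatible on overlaps since they all come from restrictions of the single morphism $\pi$) yields the claimed global isomorphism.

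The main technical obstacle is the direct-limit lemma above, which is the input that converts the purely algebraic localization $\HCR(X)[s^{-1}]$ into the geometric section ring over $X_s$; everything else is essentially bookkeeping about affine morphisms, gradings, and the definition of $\Irr(X)$. Once that lemma is invoked (as a black box from EGA/Hartshorne), the argument is a cover-and-glue verification in exactly the same spirit as the toric construction of $\pi\,:\,\tors_X\to X$ in subsection \ref{subsec:hom:cor:tor}, with the polynomial ring $k[x_i]_{i\in I}$ replaced by the (possibly non-free) algebra $\HCR(X)$.
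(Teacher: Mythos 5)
Your proof is correct, but it takes a genuinely different route from the one in the paper. You construct the morphism $\pi\,:\,\tors_X\to \Spec(\HCR(X))$ directly from the adjunction $\Gamma(\tors_X,\str{\tors_X})\isom \HCR(X)$ and then verify it is an open immersion onto $\Spec(\HCR(X))\setminus \scZ(\Irr(X))$ by covering the target with the basic opens $D(s)$, $s$ homogeneous of ample degree, and invoking the standard localization lemma $\varinjlim_k H^0(X,\cF\otimes \cL^{\,kN})\isom H^0(X_s,\cF|_{X_s})$ over each affine $X_s$. The paper instead chooses ample classes $\cL_1,\dots,\cL_r$ forming a basis of $\Pic(X)$, interposes the cone ring $R(X,\cL_1,\dots,\cL_r)=\bigoplus_{\bn\in\bN^r}H^0(X,\cL_1^{n_1}\otimes\dots\otimes\cL_r^{n_r})$ between $\tors_X$ and $\Spec(\HCR(X))$, observes that the composite map $\tors_X\to\Spec(R(X,\cL_1,\dots,\cL_r))$ is already understood by iterating the rank-one (ample $\bG_m$-torsor) case, and then concludes by a birationality argument showing that $\HCR(X)$ and $R(X,\cL_1,\dots,\cL_r)$ have the same fraction field. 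Your approach is more local and more self-contained: it avoids the rank-one bootstrap and the fraction-field computation, at the price of explicitly handling the covering by the $D(s)$'s and keeping track of the $\Pic(X)$-grading when identifying $\HCR(X)[s^{-1}]$ with the section algebra over $X_s$. The paper's route, by contrast, stays entirely on the ring side and makes the reduction to the simplicial case structurally transparent, which is closer in spirit to the original Hassett--Tschinkel treatment. Both hinge on the same underlying EGA-type input (sections of a coherent sheaf over the nonvanishing locus of an ample section), surfacing in your argument as the direct-limit lemma and in the paper's as the ample-localization facts already established in the $\bG_m$-torsor subsection.
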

When $\Pic(X)$ is of rank $1$ (hence necessarily generated by an ample
class) this is exactly what was shown in subsection \ref{subsec:torsors:under:Gm}.
If case the effective cone of $X$ is simplicial and generated
by ample classes, the result follows easily (essentially, just take the fibre product). 
In general, one can always find ample classes $\cL_1,\dots,\cL_r$
which form a basis of $\Pic(X)$.
Let 
\begin{equation}
R(X,\cL_1,\dots,\cL_r)
\eqdef 
\bigoplusu{\bn\in \bN^r} 
H^0(X,\cL_1^{n_1}\otimes\dots \otimes \cL_r^{n_r})
\end{equation}
We consider the natural $\TNS{X}$-equivariant morphisms
\begin{equation}\label{eq:tpicx:equiv}
\SPEC\left(
\oplusu{\bn\in \bN^r}
\cL_1^{n_1}
\otimes
\dots
\otimes
\cL_r^{n_r}
\right)
\to 
\Spec(\HCR(X))
\to 
\Spec(R(X,\cL_1,\dots,\cL_r)).
\end{equation}
As already seen, the composition of these two morphisms is an isomorphism
\begin{equation}
\SPEC\left(
\oplusu{\bn\in \bN^r}
\cL_1^{n_1}
\otimes
\dots
\otimes
\cL_r^{n_r}
\right)
\longisom
\Spec(R(X,\cL_1,\dots,\cL_r))\setminus \scZ(R(X,\cL_1)^+).
\end{equation}
We will show just below that the right arrow in \eqref{eq:tpicx:equiv} is
birational: this concludes the proof of theorem, since then one deduces
easily that the left arrow in \eqref{eq:tpicx:equiv} induces an isomorphism
\begin{equation}
\SPEC\left(
\oplusu{\bn\in \bN^r}
\cL_1^{n_1}
\otimes
\dots
\otimes
\cL_r^{n_r}
\right)
\longisom
\Spec(\HCR(X))\setminus \scZ(R(X,\cL_1)^+).
\end{equation}
We have to show that $\HCR(X)$ and its subring
$R(X,\cL_1,\dots,\cL_r)$ have the same fraction field. 
Take positive integers 
$e_1,\dots,e_r$ such that 
$\ecM=\cL_1^{e_1}\otimes \dots \otimes \cL_r^{e_r}$
is very ample.
Let $\ecM'=\cL_1^{d_1}\otimes \dots \otimes\cL^{d_r}$ be an effective line bundle
(the $d_i$'s are in $\bZ^n$).
For any sufficiently large integer $N$ 
there exists positive integers $f_1,\dots,f_r$ such that
\begin{equation}
\ecM^{\otimes N}\otimes \ecM'=\cL_1^{f_1}\otimes \dots \otimes \cL_r^{f_r}=\ecM''
\end{equation}
Thus if $s$ is a nonzero section of  $\ecM^{\otimes N}$, every section
of $\ecM'$ may be written as $s''/s$ where $s''$ is a section of
$\ecM''$, hence lies in $\Frac(R(X,\cL_1,\dots,\cL_r))$.

\subsubsection{Explicit embedding of the universal torsor}
We explain how, using theorem \ref{thm:hass:tsc}, the knowledge of a presentation of $\HCR(X)$ 
together with the incidence relations between the divisors of the
chosen set of generating sections,
lead to a very explicit description of the universal torsor $\tors_X$ as
a locally closed subvariety of an affine space.
Let $\{s_i\}_{i\in I}$ denote a finite family of global (non constant)
sections generating $\HCR(X)$. 
They induce an isomorphism of
$\Pic(X)$-graded $k$-algebras
$k[x_i]_{i\in I}/\scI_X\isom \HCR(X)$ 
where
$\scI_X$ is a $\Pic(X)$-homogeneous ideal, and an
$\TNS{X}$-equivariant embedding
$\Spec(\HCR(X))\hookrightarrow \bA^I$.

For $i\in I$, let $D_i$ denote the divisor of $s_i$.
Let $U$ denote the complement of the union of the $D_i$.
Since the $s_i$'s generate $\HCR(X)$, the class of the $D_i$'s generate
$\Pic(X)$ as a group and $\ceff(X)$ as a cone,
and $\Pic(U)$ is trivial. It is moreover known that $\HCR(X)$ is an UFD
(\cite{ElKaWa,BerHau:hom}), thus we may assume that the
$s_i$ are irreducible elements of $\HCR(X)$, and that no two of them
are associate.

Therefore we obtain an exact sequence of
free modules of finite rank:
\begin{equation}\label{eq:exsq:general}
0\to k[U]^{\inv}/k^{\inv}\to \oplusu{i\in I} \bZ\,D_i\to \Pic(X)\to 0
\end{equation}
which is a generalization of \eqref{eq:exsq}, valid in the toric case.

For an ample class $D$ denote by $\cI_D$ the class of subset $J$ of $I$ such that there exists
$\lambda_i\in \bN_{>0}^I$ and $m\in \bN_{>0}$ satisfying 
$\classe{\sum \lambda_i\,D_i}=\class{m\,D}$.
Then the ideals 
$\langle\prod_{i\in J}s_i\rangle_{J\in \cI_D}$ and
$\langle R(X,D)^+\rangle$
have the same radical, and 
thanks to theorem \ref{thm:hass:tsc},  $\tors_X$ may be described as the open subset of the variety
$\Spec(\HCR(X))$ given by the union over $J\in \cI_D$ of the trace of the open subset
$\prod_{i\in J}x_i\neq 0$. Setting
\begin{equation}
\wt{\cI_D}=\{J\subset I,\quad\forall K\in \cI_D,\quad J\cap K\neq \vide\},
\end{equation}
we have therefore
\begin{equation}
\tors_X
=
\Spec(\HCR(X))\setminus \cupu{
\substack{
J\subset I
\\
J\in \wt{\cI_D}
}} \,\,\capu{i\in J}\{x_i=0\}.
\end{equation}
Moreover one may check that, denoting by $\pi$ the quotient morphism
$\tors_X\to X$, the divisor $\pi^{\ast}D_i$ is the trace of the
hyperplane $\{x_i=0\}$ on $\tors_X$. 

From this one deduces the relation 
\begin{equation}\label{eq:descr:torsx}
\tors_X=\Spec(\HCR(X))\setminus \cupu{\substack{J\subset I\\\capu{i\in J} D_i=\vide}} \,\,\capu{i\in J}\{x_i=0\}.
\end{equation}
Indeed, first notice that
if $J\in \wt{\cI_D}$, then every point of $\cap_{i\in J}D_i$ is a base
point of $\abs{m\,D}$ for any $m\geq 1$. Since $D$ is ample,  
$\cap_{i\in J}D_i$ must be empty, and the RHS of
\eqref{eq:descr:torsx} is contained in $\tors_X$. And conversely, if for a $J\subset
I$ one has  $\tors_X\cap \capu{i\in J}\{x_i=0\}\neq \empty$, then 
$\pi^{\ast}(\cap_{i\in J}D_i)$, hence $\cap_{i\in J}D_i$, are non empty.
\begin{ex}
For a toric variety $X$, we thus recover the previous construction
\eqref{eq:def:torsX:toric} of $\tors_X$.
\end{ex}
\begin{ex}
For the plane blown up at three collinear points, we have, retaining
the notations of example \ref{ex:p2blownup3pts},
\begin{multline}
\tors_X=\Spec(k[x_0,\dots,x_6]/(x_1x_4+x_2x_5+x_3x_6))
\setminus 
\\
\cupu{4\leq i\neq j\leq 6} 
\{x_i=0\}\cap \{x_0=0\} 
\cup
\cupu{1\leq i\neq j\leq 3} 
\{x_i=0\}\cap \{x_j=0\}
\cup
\cupu{\substack{1\leq i\leq 3,\\ 4\leq j\leq 6,\\ j\neq i+3}}
\{x_i=0\}\cap \{x_j=0\}.
\end{multline}
\end{ex}

\subsection{Description of the functor of points of a variety whose
  homogeneous coordinate ring is finitely generated}

Retain all the notations of the previous section. We  want to describe
the functor of points of $X$ in terms of its homogeneous coordinate
ring, more precisely in terms of a presentation of the ring and the
incidence relations of the divisors of the chosen set of generating sections.
We follow very closely the approach described in the toric case. 
The novelty in the nontoric case is the nontrivial relations satisfied by the generators,
but it is rather easily dealt with.

Similarly to the
toric case, thanks to exact sequence \eqref{eq:exsq:general}, 
every element $m$ of $k[U]^{\inv}/k^{\inv}$ 
 determines an isomorphism
$c_m\,:\,\otimesu{i\in I} \str{X}(D_i)^{\otimes v_{D_i}(m)}\isom
\str{X}$ (where $v_{D_i}(m)$ is the order of annulation of the
rational function $m$ along $D_i$), and we have
$c_m\otimes c_{m'}=c_{m+m'}$.

Let $f\,:\,S\to X$ be a morphism from a $k$-scheme $S$ to $X$. 
Let $\cL_i\eqdef f^{\ast} \str{X}(D_i)$, $u_i\eqdef f^{\ast}s_i$
and for $m\in k[U]^{\inv}/k^{\inv}$, $d_m\eqdef f^{\ast} c_m$.
The datum 
\begin{equation}
(\{(\cL_i,u_i)\}_{i\in I},\{d_m\}_{m\in k[U]^{\inv}/k^{\inv}})
\end{equation}
is then an
\termin{$X$-collection on $S$} in the following sense:
\begin{defi}\label{defi:coll:gen}
An $X$-collection on a $k$-scheme $S$ is the datum of:
\begin{enumerate}
\item a family of pairs $\{(\cL_i,u_i)\}_{i\in I}$ where $\cL_i$ is a line bundle
on $S$  and $u_i$ a global section of $\cL_i$
\item
a family of isomorphisms 
$\{d_m\,:\,\otimes \cL_i^{\otimes v_{D_i}(m)}\isom \str{S}\}_{m\in k[U]^{\inv}/k^{\inv}}$
\end{enumerate}
satisfying the following conditions:
\begin{enumerate}
\item
for all $m,m'$ one has $d_m\otimes d_{m'}=d_{m+m'}$;
\item
for every $J\subset I$ such that $\cap_{i\in J}D_i=\vide$
the sections $\{u_i\}_{i\in J}$ do
not vanish simultaneously;
\item
For every homogeneous element $F$ of $\scI_X$, the section 
$F(u_i)_{i\in I}$ is the zero section.
\end{enumerate}
\end{defi}
Note that the datum of the trivializations $\{d_m\}$ allows to give a
sense to the latter condition, more precisely it allows to interpret 
$F(u_i)_{i\in I}$ as the section of a line bundle on $S$.

We have a canonical $X$-collection $C_X$ on $X$ given by $(\{(\str{X}(D_i),s_i)\},\{c_m\})$
and similarly to the toric case one shows that the maps
\begin{equation}\label{eq:iso:fop}
\map{\Hom(S,X)}{\Coll_{X,S}}{f}{f^{\ast}C_X}
\end{equation}
define an isomorphism between the functor of points of $X$ and the
functor which associates to a $k$-scheme $S$ the set $\Coll_{X,S}$ of
isomorphism classes of $X$-collections on $S$. Moreover \eqref{eq:iso:fop}
induces a bijection between the element of $\Hom(S,X)$ which do not
factor through the boundary $\cup D_i$ and the non-degenerate
$X$-collections on $S$ (those for which no one of the sections $u_i$ is
the zero section).

Now we should examine the functor $\Hom(\bP^1,X)$, or more precisely
the open subfonctor given by morphisms who do not factor through 
the boundary\footnote{As in the toric case, one could by the same kind
of arguments study the full functor, but for the sake of simplicity
this will be omitted in these notes}.
Such a morphism is entirely determined by an equivalence class of
non-degenerate $X$-collections on $\bP^1$.
Let $y\in \Pic(X)^{\vee}\cap \ceff(X)^{\vee}=\bN^I\cap \Pic(X)^{\vee}$ 
(here of course we view $\Pic(X)^{\vee}$ as a
subgroup of $\bZ^I$ through the dual of the exact sequence
\eqref{eq:exsq:general}). 
Denote by 
$
\wt{\scZ}^{y}_{X}
$
the $\TNS{X}$-invariant closed subscheme of $\Homogs_{y}\isom \prod_{i\in I}\bA^{y_i+1}\setminus \{0\}$
defined by the equations
\begin{equation}
F(P_i)=0
\end{equation}
where 
$F$ varies along the homogeneous elements of $\scI_X$. Denote by
$\scZ^{y}_X$ the image of $\wt{\scZ}^{y}_X$ in $\bP^{y}$.

Denote by $\Homogs_{y,X}$ the open subset of $\Homogs_{y}$
consisting of $I$-uple $(P_i)$ such that for every $J\subset I$ such
that $\cap_{i\in I}D_i=\vide$, the $\{P_i\}_{i\in J}$ are coprime.

Then one can show that the variety
$(\Homogs_{y,X}\cap \wt{\scZ}^{y}_X)/\TNS{X}$ 
is isomorphic to $\HOM_{U}(\bP^1,X,y)$. 
Hence, if $T_X$ denotes the torus $\Hom(k[U]^{\inv}/k^{\inv},\bG_m)$,
$\HOM_{U}(\bP^1,X,y)$ is a torsor under $T_X$ over $\bP^{y}_X\cap \scZ^{y}_{X}$.

\subsection{Application to the degree zeta function}
Let us know explain how this description of $\Hom(\bP^1,X)$ gives rise
to an expression of the degree zeta function similar to the one we
obtained in the toric case. 
We will assume that the base field $k$ is a finite field of
cardinality $q$ and restrict ourselves to the case of the classical
degree zeta function.
We have, for $y\in \ceff(X)^{\vee}\cap \Pic(X)^{\vee}$,

\begin{align}
\frac{\# \HOM_U(\bP^1,X,y)(k)}{\left(q-1\right)^{\dim(T_X)}}
&=
\# \left(\bP^{y}_{X}\cap \scZ^{y}_{X}\right) (k)
\\
&
=
\sum_{\becD\in \bP^{y}(k)}
\ind_{\bP^{y}_{X}(k)}(\becD)\ind_{\scZ^{y}_{X} (k)}(\becD)
\\
&=
\sum_{\becD\in \bP^{y}(k)}
\,\,\,\,
\left(
\sum_{0\leq \becD'\leq \becD}
\mux(\becD')
\right)
\ind_{\scZ^{y}_{X} (k)}(\becD)
\end{align}
where $\mux$ is the function determined by the relation
\begin{equation}
\forall \,\bd\in \bN^I, \quad 
\forall \,\becD\in \bP^{\bd}(k),\quad
\sum_{\becD'\leq \becD}\mux(\becD')
=
\ind_{\bP^{\bd}_{X}(k)}(\becD),
\end{equation}
for which proposition \ref{prop:mu} remains valid.
After a straightforward change of variables, the previous expression becomes
\begin{equation}\label{eq:expr}
\sum_{
\substack{
\becD\in \diveff{\bP^1}^I
\\
\forall i\in I,\quad \acc{y}{D_i}\geq \deg(\ecD_i)
}
}
\mux(\becD)
\sum_{\becD'\in \bP^{y-\deg(\becD)}}
\ind_{\scZ^{y}_{X} (k)}(\becD+\becD').
\end{equation}
For $\becD\in \diveff{\bP^1}^I$ 
such that $\acc{y}{D_i}\geq \deg(\ecD_i)$ 
let us denote by $\scN_X(\becD,y)$
the cardinality of the set
\begin{equation}
\{(P_i)\in \Homogs_{y-\deg(\becD)}(k),\,\,\,\forall F\in \scI_X^{\text{homog}},\,\,\,F(P_i.P_{\ecD_i})=0\}
\end{equation}
(where $P_{\ecD_i}\in \Homogs_{\deg(\ecD_i)}(k)$ denotes a representative of $\ecD_i\in \bP^{\deg(\ecD_i)}(k)$).
Then
$\# \HOM_U(\bP^1,X,y)(k)$
may be expressed as
\begin{equation}\label{eq:expr:bis}
\frac{1}{(q-1)^{\rk(\Pic(X))}}
\sum_{
\substack{
\becD\in \diveff{\bP^1}^I,
\\
\forall i\in I, \quad \acc{y}{D_i}\geq \deg(\ecD_i)
}
}
\mux(\becD)\,\,
\scN_X(\becD,y).
\end{equation}

This expression generalizes the one we obtained in the toric
case: apply the morphism $\#_k$ to relation \eqref{eq:expr:gen}
and use \eqref{eq:cardkmuxm}; in the toric case, the ideal $\scI_X$ is the zero ideal and $\scN_X(\becD,y)$ is nothing
else than the cardinality of $\Homogs_{y-\deg(\becD)}$.

Since the behaviour of the
M\"obius function $\mu_X$ is easily understood whether the variety $X$ is
toric or not,
the fundamental difference between the toric and non toric case in the
study of the degree zeta function is that we have to deal with
the non trivial relations satisfied by the generators of the
homogeneous coordinate ring.
Thus $\scN_X(\becD,y)$ is really the hard part to undersand
in the above expression; as far as I know, there is yet no general
procedure to handle these kind of relations; every succesful attempt
to settle Manin's conjecture using this method is highly dependent on the particular shape of the
equations defining the homogeneous coordinate ring of the involved
variety or family of varieties.

\begin{rem}
It is not clear (at least to me) what could be a sensible analog of expression
\eqref{eq:expr:bis} for the class of $\HOM_{U}(\bP^1,X,y)$ in the
Grothendieck ring of varieties.
\end{rem}

\subsection{Application to the projective plane blown up at three
  collinear points}

I will now describe very sketchly how expression \eqref{eq:expr:bis} leads to the
expected estimates for the anticanonical classical degree zeta function in a very particular
case, namely the case of the projective plane blown up at three
collinear points (see \cite{Bou:fam} for a generalization). We retain the notations of example \eqref{ex:p2blownup3pts}.
Note that $(D_0,D_1,D_2,D_3,D_4)$ is a basis of $\Pic(X)$ and that we
have the linear equivalence relations
\begin{equation}\label{eq:lineqrel}
D_4\sim D_0+D_2+D_3,\quad
D_5\sim D_0+D_1+D_3,\quad
D_6\sim D_0+D_1+D_2.
\end{equation}
Moreover an anticanonical divisor is easily computed as
$3\,D_0+2\,D_1+2\,D_2+2\,D_3$. Note that its class coincide with the
class of the sum of the boundary divisors minus the class of the degree of the
relation defining $\HCR(X)$; this is in fact a special case of a
generalized adjunction formula, see \cite[proposition 8.5]{BerHau:Cox}.

Now let $\becD\in \diveff{\bP^1}^7$ 
and let $\bd\in \ceff(X)^{\vee}\cap \Pic(X)^{\vee}\subset \bZ^7$ such
that $\bd\geq \deg(\becD)$; note
that according to \eqref{eq:lineqrel} 
the condition $\bd\in \ceff(X)^{\vee}\cap \Pic(X)^{\vee}$
means here that $\bd$ satisifies $d_i\geq 0$ for 
$0\leq i\leq 7$ and 
\begin{equation}
d_4=d_0+d_2+d_3,\quad
d_5=d_0+d_1+d_3,\quad
d_6=d_0+d_1+d_2.
\end{equation}
Let $Q_i\in \Homogs_{\deg(\ecD_i)}$ be a representative of $\ecD_i$.
We have to estimate the number of elements $(P_0,\dots,P_7)\in \Homogs_{\bd-\deg(\becD)}$
satisfying 
\begin{equation}
P_1\,P_4\,Q_1\,Q_4+P_2\,P_5\,Q_2\,Q_5+P_3\,P_6\,Q_3\,Q_6=0.
\end{equation} 
We make a first `approximation' by allowing $P_4$, $P_5$ and $P_6$ to be zero
and use the following elementary lemma.
\begin{lemma}
Let $D$ be a nonnegative integer, $e_1$, $e_2$ and $e_3$ be nonnegative
integers such that $e_i\leq D$. 
Moreover we assume that
$e_i+e_j\leq D$ holds whenever $i\neq j$.
Let $(R_1,R_2,R_3)$ be an element of $\Homogs_{(e_1,e_2,e_3)}(k)$.
Then the dimension of the subspace
set 
\begin{equation}
\{(R'_1,R'_2,R'_3)\in \Homog_{(D-e_1,D-e_2,D-e_3)},\quad R_1\,R'_1+R_2\,R'_2+R_3\,R'_3=0\}
\end{equation}
is 
\begin{equation}
2+2\,D-(e_1+e_2+e_3)+\deg(\gcd(P_1,P_2,P_3)).
\end{equation}
\end{lemma}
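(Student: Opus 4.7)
The plan is to interpret the equation $R_1 R'_1 + R_2 R'_2 + R_3 R'_3 = 0$ sheaf-theoretically on $\bP^1$ and to compute the dimension of its solution space by an Euler characteristic argument. First, I would set $g = \gcd(R_1, R_2, R_3)$, let $\delta = \deg g$, and factor $R_i = g\, S_i$ with $\gcd(S_1, S_2, S_3) = 1$. Dividing the equation by $g$ shows that the solution space is unchanged if $(R_1, R_2, R_3)$ is replaced by $(S_1, S_2, S_3)$, so we may work with the equation $\sum_i S_i R'_i = 0$.

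Since the $S_i$ have no common zero on $\bP^1$, the rule $(a_1, a_2, a_3) \mapsto \sum_i S_i\, a_i$ defines a surjective morphism of sheaves
\begin{equation*}
\phi\colon \str{\bP^1}(D - e_1) \oplus \str{\bP^1}(D - e_2) \oplus \str{\bP^1}(D - e_3) \longto \str{\bP^1}(D - \delta),
\end{equation*}
where each $S_i$ is viewed as a section of $\str{\bP^1}(e_i - \delta)$. Letting $K$ denote its kernel, the sought-for solution space identifies with $H^0(\bP^1, K)$. Because $D - e_i \geq 0$ and $D - \delta \geq 0$, the middle and right terms of the short exact sequence have vanishing $H^1$, so the long exact sequence in cohomology yields $\dim H^0(K) = \chi(K) + \dim H^1(K)$, where a direct computation gives
\begin{equation*}
\chi(K) = \sum_{i=1}^3 (D - e_i + 1) - (D - \delta + 1) = 2 + 2D - (e_1 + e_2 + e_3) + \delta.
\end{equation*}
The lemma therefore reduces to proving that $H^1(K) = 0$, or equivalently that $H^0(\phi)$ is surjective.

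The main obstacle is exactly this surjectivity: given $P \in k[t_0, t_1]$ homogeneous of degree $\leq D - \delta$, one must produce $R'_i$ of degree $\leq D - e_i$ satisfying $\sum_i S_i R'_i = P$. I would treat this by a two-step Bezout procedure. Assume without loss of generality $e_1 \leq e_2 \leq e_3$, and set $h = \gcd(S_2, S_3)$; then $\gcd(S_1, S_2, S_3) = 1$ forces $\gcd(S_1, h) = 1$, so extended Bezout (with reduction modulo $h$) furnishes $A, B$ with $P = S_1 A + h B$ and $\deg A < \deg h$. Writing $S_j = h\, S'_j$ for $j = 2, 3$ (whence $\gcd(S'_2, S'_3) = 1$), a second Bezout produces $R'_2, R'_3$ with $S'_2 R'_2 + S'_3 R'_3 = B$ and $\deg R'_2 < \deg S'_3$. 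Setting $R'_1 = A$, tracking degrees through each step gives $\deg R'_i \leq D - e_i$ for every $i$; the inequalities required at each stage follow directly from the pairwise hypothesis $e_i + e_j \leq D$ (in fact only $e_2 + e_3 \leq D$ and $e_1 + e_2 \leq D$ are needed), which is the sole place this hypothesis is used.
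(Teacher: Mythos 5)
The paper states this lemma without proof (referring instead to \cite{Bou:compt,Bou:fam} for details), so there is nothing in the text to compare against; judged on its own, your argument is correct and well organized. The reduction to a cokernel computation on $\bP^1$ --- replacing $(R_1,R_2,R_3)$ by the coprime triple $(S_1,S_2,S_3)$ after extracting $g=\gcd$, packaging the equation as the kernel $K$ of a sheaf surjection $\oplus_i \str{\bP^1}(D-e_i)\to\str{\bP^1}(D-\delta)$, computing $\chi(K)$, and observing that the lemma is equivalent to $H^1(K)=0$ --- is exactly the right framing, and your Euler-characteristic computation reproduces the claimed formula (note the $P_i$ in the statement is a typo for $R_i$).

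The one step to tighten is the Bezout argument for surjectivity on $H^0$. As written, bounds such as $\deg A<\deg h$ and $\deg R'_2<\deg S'_3$ do not typecheck for homogeneous binary forms, whose degree is fixed rather than bounded; you are implicitly working inhomogeneously. To make this precise, extend scalars to $\bar{k}$ (harmless: both the dimension in question and $\deg\gcd$ are invariant under base extension), apply a linear automorphism of $\bP^1$ so that $t_0$ divides none of $S_1,S_2,S_3$, dehomogenize via $t=t_1/t_0$, carry out the two successive divisions in $k[t]$ with the inhomogeneous degree bounds you indicate --- whose verification indeed uses exactly $e_1+e_2\le D$ and $e_2+e_3\le D$ under the ordering $e_1\le e_2\le e_3$, as you note --- and then rehomogenize each $r'_i$ by the appropriate power of $t_0$ up to degree $D-e_i$; injectivity of dehomogenization in each fixed degree recovers the identity $\sum S_iR'_i=P$. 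With that minor repair the proof is complete.
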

We apply this lemma to the above situation, setting $R_i=P_i\,Q_i\,Q_{i+3}$
and $R'_i=P_{i+3}$ (hence $e_i=d_i+\deg(\ecD_{i+3})$ and $D=d_i+d_{i+3}=d_0+d_1+d_2+d_3$), and we find that under the conditions
\begin{equation}\label{eq:cond:deg}
\deg(\ecD_i)+\deg(\ecD_j)\leq d_0+d_k\,\quad \{i,j,k\}=\{1,2,3\}
\end{equation}
we have 
\begin{multline}\label{eq:scnx}
\scN_X(\bd,\becD)
=
q^{2+2\,d_0+d_1+d_2+d_3-\deg(\ecD_4)-\deg(\ecD_5)-\deg(\ecD_6)}
\\
\times
\sum_{\becE\in \bP^{(d_i-\deg(\ecD_i))_{0\leq i\leq 3}}}
q^{\deg(\gcd(\cE_1+\cD_1+\cD_4,\cE_2+\cD_2+\cD_5,\cE_3+\cD_3+\cD_6))}.
\end{multline}
Our second `approximation' will be to assume that 
\eqref{eq:scnx} holds regardless \eqref{eq:cond:deg} are satisfied or not.

Now for $\bd\in \bN^4$ and $\becD\in \diveff{\bP^1}^7$ we want to estimate 
the quantity
\begin{equation}
\sum_{\becE\in \bP^{\bd}}
q^{\deg(\gcd(\cE_1+\cD_1+\cD_4,\cE_2+\cD_2+\cD_5,\cE_3+\cD_3+\cD_6))}.
\end{equation}
We consider the generating series
\begin{multline}
\sum_{\bd\in \bN^4} \sum_{\becE\in \bP^{\bd}}
q^{\deg(\gcd(\cE_1+\cD_1+\cD_4,\cE_2+\cD_2+\cD_5,\cE_3+\cD_3+\cD_6))}
\prod_{0\leq i\leq 3} t_i^{d_i}
\\
=
\sum_{\becD\in \diveff{\bP^1}^4}
q^{\deg(\gcd(\cE_1+\cD_1+\cD_4,\cE_2+\cD_2+\cD_5,\cE_3+\cD_3+\cD_6))}
\prod_{0\leq i\leq 3} t_i^{\deg(\cE_i)}
\end{multline}
wich decomposes into an Euler product
\begin{equation}\label{eq:eul:prod}
\prod_{\cP\in (\bP^1_k)^{(0)}} \sum_{\bn\in
  \bN^4}q^{\deg(\cP)\,
\Minu{1\leq i\leq 3}(n_i+\ord_\cP(\cD_i)+\ord_\cP(\cD_{i+3}))}
\,\prod_{0\leq i\leq 3}t_i^{\deg(\cP)\,n_i}.
\end{equation}
Let us explain what happens in the case $\becD=(0,\dots,0)$.
It is rather easy to check the identity
\begin{equation}
\sum_{\bn\in  \bN^4}\theta^{\,\Min(n_1,n_2,n_3)}\,\prod_{0\leq i\leq 3}t_i^{\,n_i}
=
\frac{1-t_1\,t_2\,t_3}{1-\theta\,t_1\,t_2\,t_3}\prod_{1\leq i\leq 3}\frac{1}{1-t_i}.
\end{equation}
Thus \eqref{eq:eul:prod} may be rewritten as
\begin{equation}
\prod_{\cP\in (\bP^1_{k})^{(0)}}
\frac{1-(t_1\,t_2\,t_3)^{\deg(\cP)}}{1-(q\,t_1\,t_2\,t_3)^{\deg(\cP)}} \prod_{0\leq i\leq 3}\ZHW(\bP^1_k,t)
\end{equation}
(recall that $\ZHW(\bP^1_k,t)=\frac{1}{(1-t)(1-q\,t)}$ is the
Hasse--Weil zeta function of $\bP^1_k$). Now the first factor of
the above expression defines a holomorphic function $F$ in the polydisc
$\prod\{\abs{t_i}\leq q^{-1+\eps}\}$ for sufficiently small $\eps>0$.
Using Cauchy estimates, one obtains the approximation
\begin{equation}
\sum_{\becE\in \bP^{\bd}}
q^{\deg(\gcd(\cE_1,\cE_2,\cE_3))}
\sim
F(q^{-1},\dots,q^{-1})\,q^{d_0+d_1+d_2+d_3}
\end{equation}
In case $\becD\neq (0,\dots,0)$, an analogous reasoning shows the approximation
\begin{equation}
\sum_{\becE\in \bP^{\bd}}
q^{\deg(\gcd(\cE_1+\cD_1+\cD_4,\cE_2+\cD_2+\cD_5,\cE_3+\cD_3+\cD_6))}
\sim
F_{\becD}(q^{-1},\dots,q^{-1})\,q^{d_0+d_1+d_2+d_3}
\end{equation}
where $F_{\becD}(q^{-1},\dots,q^{-1})$ has an explicit expression 
as an Euler product $\prod_{\cP}\wt{F}_{\becD}(q^{-\deg(\cP)})$, 
$\wt{F}_{\becD}$ being a rational function, 
 depending only on the $7$-uple of integers $(\ord_P(\ecD_i))$.

As a third 'approximation' we will assume that the above
estimation is in fact an equality, thus obtaining
\begin{equation}
\scN_X(\bd,\becD)=F_{\becD}(q^{-1},\dots,q^{-1})\,q^{\,2+3 d_0+2\,d_1+2\,d_2+2\,d_3-\sum_{0\leq
    i\leq 6}\deg(\ecD_i)}.
\end{equation}
Recalling that the anticanonical class is given by
$3\,D_0+D_1+D_2+D_3$, this may be rewritten as
\begin{equation}
\scN_X(\bd,\becD)=F_{\becD}(q^{-1},\dots,q^{-1})\,q^{\,\dim(X)+\acc{\bd}{\antican{X}}
-\sum_{0\leq    i\leq 6}\deg(\ecD_i)}.
\end{equation}
Our last 'approximation' will be to drop the conditions 
$\acc{\bd}{D_i}\geq \deg(\ecD_i)$
appearing in the summation in expression \eqref{eq:expr:bis}.

Modulo all the previous approximations, the classical anticanonical
degree zeta function 
may be now  written as
\begin{multline}\label{eq:expr:fin}
q^{\dim(X)}
\sum_{\becD\in \diveff{\bP^1}^I}\mux(\becD)
F_{\becD}(q^{-1},\dots,q^{-1})
q^{-\sum_{0\leq i\leq 6}\deg(\ecD_i)}
\\
\times
\sum_{
\bd\in \ceff(X)^{\vee}\cap \Pic(X)^{\vee}
}
(q\,t)^{\acc{\bd}{\antican{X}}}
\end{multline}
The second
factor is exactly
$\text{sp}_{\antican{X}}Z(\Pic(X)^{\vee},\ceff(X)^{\vee})(q\,t)$.

Now the main task we are left with in order to show that the answer to
question \ref{ques:fin} is indeed positive, is to 
establish that all the above 'approximations' can be justified more
rigorously through the introduction of error terms which
are indeed $(q^{-1},\rk(\Pic(X))-1)$ controlled. Roughly, this can be
done using a regular decomposition of the effective cone analogous to the one
used in the toric case, but there is a certain amount of technical subtelties
that will not be discussed here (see \cite{Bou:compt,Bou:fam}).

Regarding Peyre's refinement of Manin's conjecture
discussed at the end of section \ref{subsec:dzf:tv},
another task is to show
that the constant given by the first factor of \eqref{eq:expr:fin} may
be expressed as the Tamagawa number
\begin{equation}\label{eq:tam:bis}
\frac{q^{\dim(X)}}{(1-q^{-1})^{\rk{\Pic(X)}}}
\prod_{\cP\in (\bP^1_k)^{(0)}}
(1-q^{-\deg(\cP)})^{\rk(\Pic(X))}\,
\frac
{\# X(\kappa_{\cP})}
{q^{\,\deg(\cP)\,\dim(X)}}.
\end{equation}
But using properties of $\mux$ and $F_{\becD}$, 
the first factor of \eqref{eq:expr:fin}
may be rewritten as the Euler product
\begin{equation}
\prod_{\cP\in (\bP^1_k)^{(0)}}
\sum_{\bn\in \{0,1\}^7}
\mux^0(\bn)
\wt{F}_{\bn}(q^{-\deg(\cP)})
q^{-\deg(\cP)\sum n_i}
\end{equation}
Hence we must check, for every $\cP\in (\bP^1_k)^{(0)}$, the following identity
\begin{equation}\label{eq:equality:to:check}
(1-q^{-\deg(\cP)})^{\rk(\Pic(X))}\,
\frac{\# X(\kappa_{\cP})}
{q^{\,\deg(\cP)\,\dim(X)}}
=
\sum_{\bn\in \{0,1\}^7}
\mux^0(\bn)
\wt{F}_{\bn}(q^{-\deg(\cP)})
q^{-\deg(\cP)\sum n_i}.
\end{equation}
Note that $\# X(\kappa_{\cP})=1+4\,q^{\deg(\cP)}+q^{2\,\deg(\cP)}$, hence
\eqref{eq:equality:to:check} may be seen as a formal identity between
two rational functions in the variable $q^{\deg(\cP)}$, which may be
checked in a finite amount of time (recall that we have an
explicit expression for the rational functions $\wt{F}_{\bn}$; of
course a 
computer algebra system
 may be helpful...). One can also try to exploit the
following relation, which holds for every finite $k$-extension $L$. 
This is a generalization of proposition
\ref{prop:rel:tor} to the nontoric case, valid for every $k$-variety
$X$ having a finitely generated homogeneous coordinate ring:
\begin{equation}\label{eq:rel:mux:tors} 
\sum_{\bn\in  \{0,1\}^I}
\mux^0(\bn)\,\frac{\# \tors_{X,\bn}(L)}{(\# L)^{\dim(\tors_X)}}
=
(1-\# L)^{\rk(\Pic(X))}\frac{\# X(L)}{(\# L)^{\dim(X)}}
\end{equation}
Here we denote by $\tors_{X,\bn}$ the intersection of $\tors_X\subset \bA^I$ with
the subspace $\capu{i,\,n_i=1}\{x_i=0\}$. 
The proof goes along the same line that the proof of proposition \ref{prop:rel:tor} 
and from \eqref{eq:rel:mux:tors} one may derive a slightly more conceptual proof
of \eqref{eq:equality:to:check} (see \cite{Bou:compt}). 
But to our mind this still does not explain in a satisfactory way why
\eqref{eq:equality:to:check} holds, and it would be nice to find a
genuine conceptual explanation.

It is interesting to note how very similar arguments provide an answer
to question \ref{ques:bat} for $X$ (though here the obtained result is
also a consequence of \cite{KimLeeOh:rat}).
Let us sketch very roughly how this is done: the idea is to study
\begin{equation}\label{eq:lim}
\lim_{r\to +\infty} p^{-r(\dim(X)+\acc{y}{\antican{X}})}\# \HOM_U(\bP^1,X,y)(\bF_{p^r}).
\end{equation}
Using the same kind of approximations for \eqref{eq:expr:bis} as before, one obtains
that $\# \HOM_U(\bP^1,X,y)(\bF_{p^r})$
may be estimated by
\begin{equation}
p^{\,r(\dim(X)+\acc{y}{\antican{X}})}
\sum_{\becD\in \diveff{\bP^1_{\bF_{p^r}}}^7}
F_{\becD}(p^{-r},\dots,p^{-r})
\,
p^{-r\left(\sum_{0\leq i\leq 6}\deg(\ecD_i)\right)}.
\end{equation}
Decomposing the sum as an Euler product and using dominated
convergence and the properties of $\mu_X$ and $F_{\bn}$, 
one shows that the sum appearing in the previous
expression converges to $1$ when $r\to +\infty$.
Hence (after, of course, having rigorously justified the
approximations) the limit in \eqref{eq:lim} is $1$, and standard arguments
invoking Weil conjectures show that this implies that 
$\HOM_U(\bP^1,X,y)$ is geometrically irreducible, of dimension
$\dim(X)+\acc{y}{\antican{X}}$
(this holds for any $y\in \ceff(X)^{\vee}\cap \Pic(X)^{\vee}$).

One of the key ingredient in the above (sketch of) proof of the
geometric Manin's conjecture for the plane blown up at three collinear
points was the property that the homogeneous coordinate ring 
has only one relation and that there exists $I_0\subset I$ such that the classes of
$\{D_{i}\}_{i\in I_0}$ form a basis of $\Pic(X)$ and the relation is
linear with respect to the variables $\{s_i\}_{i\in I\setminus I_0}$.
In some sense, in the context of the approach of our counting problem via
homogeneous coordinate ring, this situation might be considered as the simplest one once
the case of toric varieties (for which there are no relations) has
been excluded. Note that along varieties for which the hypotheses hold one finds a lot of generalized del
Pezzo surfaces whose homogeneous coordinate ring has one relation (see
\cite{Der:sdp:ut:hyp} for their complete classification).
One might hope that the techniques employed may lead to
a kind of uniform proof of Manin's conjecture for varieties
satisfying the above requirements (see \cite{Bou:fam} for a beginning of
justification), though even under the mere above hypotheses the control of the
error terms seems to be a very hard task in general. 
One of the main problem is that what was designated in the above sketch of proof by the 'second approximation'
does not seem to lead in general to controllable error terms; a
somewhat hidden crucial point in the
specific case considered above is that the classes of the divisors 
$(D_i)_{i\in I\setminus  I_0}$ are, in some sense, 'sufficiently large' with respect to the
degree of the relation defining $\HCR(X)$.

Of course, one could also try to draw inspiration from works dealing
with Manin's conjecture for generalized del Pezzo surfaces in the
arithmetic case, such as \eg \cite{dlBBD,BroDer:DP4A4}, for which such
large degree conditions do not intervene. But one should
notice that in these works the base field is almost always the field
of rational numbers and that extending the methods to arbitrary
number fields seems to be a quite delicate task. 
On the other hand, though we 
have not given the details in this survey, the above sketched method
generalizes rather easily when $\bP^1$ is replaced by an arbitrary smooth
projective curve.

\end{document}